\newtheorem{theorem}{Theorem}[section]
\newtheorem{lemma}[theorem]{Lemma}
\newtheorem{proposition}[theorem]{Proposition}
\theoremstyle{definition}
\newtheorem{definition}[theorem]{Definition}
\newtheorem{example}[theorem]{Example}
\numberwithin{equation}{section}
\begin{document}
\setcounter{page}{1}


\centerline{}

\centerline{}


\title[Hilbert space models and Blaschke frames]{Hilbert space models and Blaschke frames}

\author[Connor Evans]{Connor Evans}

\address{School of Mathematics, Statistics and Physics, Newcastle University, Newcastle upon
Tyne NE1 7RU, U.K.}
\email{\textcolor[rgb]{0.00,0.00,0.00}{CEvansMathematics@outlook.com}}

\dedicatory{Dedicated to my grandfather, Peter Reid. \\
A man who is nothing short of an eternal sunshine on Leith}


\date{23rd October, 2025.
\newline \indent 2020 {\it{Mathematics Subject Classification}}. 30J10, 42C15, 46B15, 47A60.
\newline \indent {\it{Key words and phrases}}. Frames, Riesz basis, Hilbert space models, Blaschke product, redundancy problem. 
\newline \indent Supported by the the Engineering and Physical Sciences Research Council grant EP/T517914/1}

 
\begin{abstract}
    \noindent For a finite Blaschke product $B$ and for an isometry $V$ on 
an infinite-dimensional separable complex Hilbert space $\mathcal{H}$ we study a sequence $(b_m)_{m=1}^\infty$ of vectors in $\mathcal{H}$, defined by $b_m = B(V^*)e_m$, where $(e_m)_{m=1}^\infty$ is an orthonormal basis in $\mathcal{H}$. We call $(b_m)_{m=1}^\infty$ a Blaschke frame for $B$ with isometry $V$ on $\mathcal{H}$. We show how instrumental the use of Hilbert space models are in frame theory by completely solving the question of redundancy for a Blaschke frame, that is, what vectors can be removed from the frame $(b_{m})_{m=1}^{\infty}$ such that $(b_{m})_{m\neq k}$ is still a frame? Using the Wold decomposition, we prove that a Blaschke frame can have no redundant vectors (a Riesz basis), have some redundant vectors, or every vector is redundant (a fully insured frame). These unique cases depend on the choice of finite Blaschke product and which isometry one chooses in the construction of a Blaschke frame.

\end{abstract} \maketitle



\section{Introduction}

In this paper, we show that the use of Hilbert space models is invaluable in solving the redundancy problem for a particular family of frames that we call a {\it{Blaschke frame}}. Recall, a sequence $(x_{m})_{m=1}^{\infty}$ of elements in an infinite-dimensional separable complex Hilbert space $\mathcal{H}$ is a {\it{frame}} for $\mathcal{H}$ if there exists constants $A,B>0$ such that 

\begin{equation}\label{framedefeq}
A\|f\|_{\mathcal{H}}^{2} \leq \sum_{m=1}^{\infty}\lvert\langle f,x_{m}\rangle_{\mathcal{H}}\rvert^{2}\leq B\|f\|_{\mathcal{H}}^{2}
\end{equation}

\noindent for all $f\in\mathcal{H}$. Corresponding to a finite Blaschke product $B$ of degree $d$ and an isometry $V$ on $\mathcal{H}$, we define a {\it{Blaschke frame for B with isometry $V$ on $\mathcal{H}$}}, or just a {\it{Blaschke frame on $\mathcal{H}$}}, to be any sequence of vectors $(b_{m})_{m=1}^{\infty}$ in $\mathcal{H}$ expressible in the form

\begin{equation}\label{blaschkeframe}
b_{m}=B(V^{*})e_{m}
\end{equation}

\noindent for $m=1,2,\hdots$ and some orthonormal basis $(e_{m})_{m=1}^{\infty}$ of $\mathcal{H}$. Since the spectrum of $V^{*}$ is contained in the closed unit disc $\overline{\mathbb{D}}$ and $B$ is holomorphic on $\overline{\mathbb{D}}$, $B(V^{*}):\mathcal{H}\rightarrow\mathcal{H}$ is defined by means of the Riesz-Dunford or the rational functional calculus for operators, Definition \ref{rdfunc}. The operator $B(V^{*})$ is a bounded, surjective operator, Proposition \ref{framepropblaschke}. It is required that $B(V^{*})$ be surjective. This is due to the fact that every Hilbert space frame is of the form $(Te_{m})_{m=1}^{\infty}$ where $T:\mathcal{H}\rightarrow\mathcal{H}$ is a bounded, surjective operator \cite[Theorem 5.5.4]{Christensen}. 

Frames were first introduced in the seminal paper of Duffin and Schaeffer in \cite{frameog} to study non-harmonic Fourier series. Since then, countless subjects have benefitted from frame theory, such as advances in quantum mechanics from Balazs and Teofanov \cite{quantum}, multiresolution analysis in Benedetto and Li \cite{multires}, analogue-digital conversion in sampling theory with Eldar \cite{eldar}, and in signal processing by Mallat \cite{mallat}. The {\it{redundancy problem}} for a frame is the reason why frame theory has become increasingly important in recent years, most notably in the development of internet communications and even its use in modelling the brain. For instance, how can we recover an entire data file even if some of the data has become partially corrupted? How can the brain retain memory even though neurons die and regenerate? Loosely, if $(x_{m})_{m=1}^{\infty}$ is a frame for a Hilbert space $\mathcal{H}$, then $\overline{\mathrm{Span}}(x_{m})_{m=1}^{\infty} = \mathcal{H}$, but deleting a vector $x_{k}$ from the frame may still leave us with $\overline{\mathrm{Span}}(x_{m})_{m\neq k}^{\infty} = \mathcal{H}$. When this situation occurs, we call such a vector $x_{k}$ a {\it{redundant vector}}. A vector that is not redundant is termed an {\it{essential vector}}. We state a concrete test to find the redundant and essential vectors of a frame, originally given by Christensen in \cite[Theorem 5.4.7]{Christensen}, which we state for the readers convenience as Theorem \ref{redundancyresult}. We can think of a frame as an over-complete basis with insurance against loss of information. 

It will be our main focus to find the redundant vectors for a Blaschke frame, Section \ref{Blaschkeframes}. For many frames, this is typically not an easy task. Instead, we circumvent this problem by utilising {\it{Hilbert space models}}. Recall that the Schur class on the unit disc, $\mathcal{S}(\mathbb{D})$, is the set of holomorphic functions $f:\mathbb{D}\rightarrow \mathbb{C}$ such that $\|f\|_{\infty} = \sup_{z\in\overline{\mathbb{D}}}|f(z)|\leq 1$, where $\mathbb{D}$ is the unit disc in $\mathbb{C}$. We say that a {\it{model}} for $f\in\mathcal{S}(\mathbb{D})$ is a pair $(\mathcal{M},u)$, where $\mathcal{M}$ is a separable Hilbert space and $u:\mathbb{D}\rightarrow \mathcal{M}$ is a mapping such that

$$1-\overline{f(w)}f(z) = \Big\langle (1-\overline{w}z)u(z),u(w)\Big\rangle_{\mathcal{M}}$$ 

\noindent for all $w,\,z\in\mathbb{D}$, \cite[Definition 2.7]{AMYOperatorbook}. Models are extremely useful tools to study function theoretic properties in terms of Hilbert space geometry. For example, there exists a model formula for functions defined on the polydisc, $\mathbb{D}^{d}$, first discovered by Agler in \cite{Agler}, from which they can be used to study boundary point singularities and directional derivatives in the work of Agler, Evans, Lykova and Young in \cite{AELY}. This was based on earlier investigations by Agler, Tully-Doyle and Young \cite{AglerTullyDoyleYoung}, in the case of the bidisc $\mathbb{D}^{2}$. Models have even been defined for functions on elliptical domains in the paper by Agler, Lykova and Young \cite{AglerLykovaYoung}, to study operators with numerical range in an ellipse \cite{ellipse}. A very special domain for which a model formula has also been defined is the {\it{symmetrized bidisc}}. This was discussed in the paper by Agler and Young in \cite{symmeterizedbidisc}. We will require a generalisation of this domain, namely the {\it{symmetrized polydisc}}, in Definition \ref{polydiscdef}. 

Finite Blaschke products are functions of the form

\begin{equation}\label{fbpform}
B(z) = \tau\prod_{i=1}^{d}\dfrac{z+\lambda_{i}}{1+\overline{\lambda_{i}}z}
\end{equation}

\noindent for all $z\in\mathbb{D}$, where $\tau\in\overline{\mathbb{D}}$ and $\lambda_{i}\in\mathbb{D}$ for $i=1,\hdots,d$. Since finite Blaschke products are in the Schur class on the unit disc and every function $f\in\mathcal{S}(\mathbb{D})$ has a model \cite[Theorem 2.9]{AMYOperatorbook}, finite Blaschke products must also have a model.

\begin{lemma}{\normalfont{\cite[Lemma 2.47]{AMYOperatorbook}}}\label{blasckhelemma}
    Let $\lambda_{1},\hdots,\lambda_{d}\in\mathbb{D}$ and let $B$ be a finite Blaschke product on $\mathbb{D}$ defined by equation (\ref{fbpform}). For $j=1,\hdots,d$, let 

\begin{equation}\label{TM}
    E_{j}(z):=\dfrac{(1-\lvert\lambda_{j}\rvert^{2})^{\frac{1}{2}}}{1+\overline{\lambda_{j}}z}\prod_{k=1}^{j-1}\dfrac{z+\lambda_{k}}{1+\overline{\lambda_{k}}z}.
\end{equation}

\noindent for all $z\in\mathbb{D}$. The functions $E_{1},\hdots,E_{d}$ are orthonormal in $\mathrm{H}^{2}(\mathbb{D})$, the Hardy space on the disc, and, for all $z,w\in\mathbb{D}$, 

\begin{equation}\label{blaschkeproductequation}
    1-\overline{B(w)}B(z)=\sum_{j=1}^{d}\overline{E_{j}(w)}(1-\overline{w}z)E_{j}(z).
\end{equation}

\end{lemma}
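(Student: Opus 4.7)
My plan is to establish the reproducing identity (\ref{blaschkeproductequation}) by a telescoping computation, and then to derive the orthonormality of $E_{1},\ldots,E_{d}$ directly from the reproducing kernel structure of $H^{2}(\mathbb{D})$.

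The starting point is a single-factor identity. Writing $\phi_{\mu}(z)=(z+\mu)/(1+\overline{\mu}z)$ for $\mu\in\mathbb{D}$, a short algebraic expansion of $(1+\mu\overline{w})(1+\overline{\mu}z)-(\overline{w}+\overline{\mu})(z+\mu)$ yields
\begin{equation*}
1-\overline{\phi_{\mu}(w)}\phi_{\mu}(z)=\frac{(1-|\mu|^{2})(1-\overline{w}z)}{(1+\mu\overline{w})(1+\overline{\mu}z)}.
\end{equation*}
Setting $B_{j}(z)=\prod_{k=1}^{j}\phi_{\lambda_{k}}(z)$ with $B_{0}\equiv 1$, the elementary telescope $1-\prod_{k=1}^{d}a_{k}=\sum_{j=1}^{d}\bigl(\prod_{k=1}^{j-1}a_{k}\bigr)(1-a_{j})$ applied with $a_{k}=\overline{\phi_{\lambda_{k}}(w)}\phi_{\lambda_{k}}(z)$ converts the above into
\begin{equation*}
1-\overline{B_{d}(w)}B_{d}(z)=(1-\overline{w}z)\sum_{j=1}^{d}\overline{E_{j}(w)}E_{j}(z),
\end{equation*}
once the summand is recognised via (\ref{TM}). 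Since the unimodular constant $\tau$ (required to be of modulus one for the claimed identity) cancels in $\overline{B(w)}B(z)=\overline{B_{d}(w)}B_{d}(z)$, this is precisely (\ref{blaschkeproductequation}).

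For orthonormality, I would compute $\langle E_{j},E_{i}\rangle_{H^{2}}$ for $i\leq j$ by passing to a boundary integral. Each $\phi_{\lambda_{k}}$ is inner, so on $\mathbb{T}$ we have $\phi_{\lambda_{k}}\overline{\phi_{\lambda_{k}}}=1$; after cancelling the shared factors for $k<i$, the integrand becomes $(1-|\lambda_{j}|^{2})^{1/2}(1-|\lambda_{i}|^{2})^{1/2}f_{ij}(z)\overline{k_{-\lambda_{i}}(z)}$, where $k_{-\lambda_{i}}(z)=(1+\overline{\lambda_{i}}z)^{-1}$ is the Cauchy kernel of $H^{2}$ at $-\lambda_{i}$ and $f_{ij}(z)=\bigl(\prod_{k=i}^{j-1}\phi_{\lambda_{k}}(z)\bigr)/(1+\overline{\lambda_{j}}z)\in H^{\infty}\subset H^{2}$. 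The reproducing kernel property therefore gives $\langle E_{j},E_{i}\rangle_{H^{2}}=(1-|\lambda_{j}|^{2})^{1/2}(1-|\lambda_{i}|^{2})^{1/2}f_{ij}(-\lambda_{i})$. For $i<j$ this vanishes because $\phi_{\lambda_{i}}(-\lambda_{i})=0$; for $i=j$ the product in $f_{jj}$ is empty and the value $f_{jj}(-\lambda_{j})=(1-|\lambda_{j}|^{2})^{-1}$ combines with the normalising constants to give exactly $1$. Hermiticity of the inner product then handles $i>j$.

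The only real obstacle is bookkeeping with the sign conventions: the zeros of the modified Blaschke factors $\phi_{\lambda_{k}}$ sit at $-\lambda_{k}$ rather than $\lambda_{k}$, so the Cauchy kernels that enter the reproducing-kernel calculation are $k_{-\lambda_{i}}$, and similarly $-\lambda_{i}$ (not $\lambda_{i}$) is the evaluation point that forces the cancellation. Once this is tracked, both halves of the lemma follow purely computationally and no heavier operator-theoretic machinery is required.
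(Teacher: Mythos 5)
Your argument is correct. Note that the paper does not prove this lemma at all --- it is quoted verbatim from \cite[Lemma 2.47]{AMYOperatorbook} --- so there is no in-paper proof to compare against; what you give is the standard self-contained derivation. Both halves check out: the single-factor computation $(1+\mu\overline{w})(1+\overline{\mu}z)-(\overline{w}+\overline{\mu})(z+\mu)=(1-|\mu|^{2})(1-\overline{w}z)$ is right, the telescoping identity assembles the summands into exactly $\overline{E_{j}(w)}(1-\overline{w}z)E_{j}(z)$, and the reproducing-kernel evaluation at $-\lambda_{i}$ (with $\phi_{\lambda_{i}}(-\lambda_{i})=0$ killing the off-diagonal terms and $f_{jj}(-\lambda_{j})=(1-|\lambda_{j}|^{2})^{-1}$ normalising the diagonal) gives orthonormality. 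Your parenthetical about $\tau$ is also a genuine and worthwhile observation: the paper's equation (\ref{fbpform}) only asserts $\tau\in\overline{\mathbb{D}}$, whereas the identity (\ref{blaschkeproductequation}) forces $|\tau|=1$, since at $z=w$ it reads $1-|\tau|^{2}|B_{d}(z)|^{2}=1-|B_{d}(z)|^{2}$; so the lemma implicitly assumes the constant is unimodular, as it is in \cite{AMYOperatorbook}.
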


\noindent The functions $(E_{j})_{j=1}^{d}$ defined by equation (\ref{TM}) are known as the {\it{Malmquist-Takenaka}} functions corresponding to the finite Blaschke product $B$, as seen in Malmquist \cite{Malm} and Takenaka \cite{TM}. By equation (\ref{blaschkeproductequation}), the pair $(\mathcal{M},u)$ with $\mathcal{M}=\mathbb{C}^{d}$ and $u(z) = (E_{1}(z),\hdots,E_{d}(z))$, is a model for $B$, that is, for all $z,w\in\mathbb{D}$,

$$1-\overline{B(w)}B(z)=\Big\langle(1-\overline{w}z)u(z),u(w)\Big\rangle_{\mathbb{C}^{d}}.$$

\noindent Note that this formula can be extended to all $z,w\in\overline{\mathbb{D}}$, since $B$, as well as the Malmquist-Takenaka functions corresponding to $B$, are well-defined on $\overline{\mathbb{D}}$.

Exploiting models of finite Blaschke products gives us our two main results, Theorem \ref{normofblaschkevectors} and Theorem \ref{redundancycor}, which we state next. Although Theorem \ref{normofblaschkevectors} is somewhat simple, its importance cannot be understated.

\begin{theorem}\label{normofblaschkevectorsintro}
Let $\mathcal{H}$ be a Hilbert space and let $(E_{j})_{j=1}^{d}$ be the Malmquist-Takenaka functions corresponding to a finite Blaschke product $B$. For every vector $b_{m}$ in a Blaschke frame for $B$ with isometry $V$ on $\mathcal{H}$ for $m=1,2,\hdots$, there exists a positive integer $s$ such that
    
$$\|b_{m}\|_{\mathcal{H}}^{2} = 1 - \dfrac{1}{\big((s-1)!\big)^{2}}\sum_{j=1}^{d}\Bigg\lvert\dfrac{d^{s-1}E_{j}(0)}{dz^{s-1}}\Bigg\rvert^{2}.$$
    
\end{theorem}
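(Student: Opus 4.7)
The plan is to promote the scalar model identity of Lemma \ref{blasckhelemma} to an operator identity on $\mathcal{H}$, and then to read off $\|b_m\|_{\mathcal{H}}^2$ by pairing it with $e_m$. First I would substitute $z=w=V^*$ in the identity
$$1-\overline{B(w)}B(z)=\sum_{j=1}^{d}\overline{E_{j}(w)}(1-\overline{w}z)E_{j}(z),$$
interpreting the complex conjugation as passage to the Hilbert space adjoint; this substitution is legitimate by the Riesz--Dunford (or rational) functional calculus, since $B$ and each $E_{j}$ is holomorphic on a neighbourhood of $\sigma(V^{*})\subset\overline{\mathbb{D}}$. Keeping the conjugated factors on the left produces the operator identity
\begin{equation*}
I-B(V^{*})^{*}B(V^{*})=\sum_{j=1}^{d}E_{j}(V^{*})^{*}(I-VV^{*})E_{j}(V^{*}).
\end{equation*}
Because $V$ is an isometry, $P:=I-VV^{*}$ is the orthogonal projection onto the wandering subspace $L:=\ker V^{*}=\mathcal{H}\ominus V\mathcal{H}$, and pairing the displayed identity with $e_{m}$ yields
$$\|b_{m}\|_{\mathcal{H}}^{2}=1-\sum_{j=1}^{d}\|P\,E_{j}(V^{*})\,e_{m}\|_{\mathcal{H}}^{2},$$
so the task reduces to evaluating each term $\|P\,E_{j}(V^{*})\,e_{m}\|^{2}$.

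For this I would invoke the Wold decomposition $\mathcal{H}=\mathcal{H}_{u}\oplus\bigoplus_{k=0}^{\infty}V^{k}L$ and choose the orthonormal basis $(e_{m})$ adapted to this decomposition, so that each $e_{m}$ lies in a single layer $V^{s-1}L$ for some positive integer $s=s(m)$, i.e.\ $e_{m}=V^{s-1}\ell$ with $\ell\in L$ and $\|\ell\|=1$. For such an $e_{m}$, $(V^{*})^{n}e_{m}=V^{s-1-n}\ell$ when $0\le n\le s-1$ and $(V^{*})^{n}e_{m}=0$ when $n\ge s$; since $P$ kills every layer $V^{k}L$ with $k\ge 1$ and acts as the identity on $L$, one obtains $P(V^{*})^{n}e_{m}=0$ for $n\neq s-1$ and $P(V^{*})^{s-1}e_{m}=\ell$.

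Now expand $E_{j}(V^{*})=\sum_{n=0}^{\infty}\tfrac{E_{j}^{(n)}(0)}{n!}(V^{*})^{n}$ in its norm-convergent Taylor series (the radius of convergence of $E_{j}$ at $0$ exceeds $1=\|V^{*}\|$, since the poles of $E_{j}$ lie outside $\overline{\mathbb{D}}$). Applying $P$ to the series collapses it to the single surviving term $n=s-1$, giving
$$P\,E_{j}(V^{*})\,e_{m}=\frac{E_{j}^{(s-1)}(0)}{(s-1)!}\,\ell,\qquad \|P\,E_{j}(V^{*})\,e_{m}\|^{2}=\frac{|E_{j}^{(s-1)}(0)|^{2}}{((s-1)!)^{2}}.$$
Summing over $j=1,\ldots,d$ delivers the formula claimed by the theorem.

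The main obstacle will be the first step, namely justifying the lift of the scalar model identity to an operator identity, confirming that the algebraic substitution $\overline{(\cdot)}\mapsto (\cdot)^{*}$ (with conjugated factors placed on the left) is faithfully carried out by the Riesz--Dunford/rational calculus, despite the non-commutativity of $V$ and $V^{*}$. A secondary conceptual point is the existence of the integer $s$: it is produced immediately by the layered structure of the shift part of $V$, but a non-trivial unitary summand $\mathcal{H}_{u}$ gives vectors with $\|b_{m}\|=1$ and no finite $s$, so the statement is naturally read in the pure-shift setting (consistent with the paper's Wold-decomposition-based classification announced in the abstract).
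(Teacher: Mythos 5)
Your proposal is correct and follows essentially the same route as the paper: lift the Malmquist--Takenaka model identity to the operator identity $1_{\mathcal{H}}-B(V^{*})^{*}B(V^{*})=\sum_{j}E_{j}(V^{*})^{*}(1_{\mathcal{H}}-VV^{*})E_{j}(V^{*})$, use the Wold decomposition to identify $1_{\mathcal{H}}-VV^{*}$ with the projection onto $\ker V^{*}$, and collapse the expansion of $E_{j}(V^{*})$ applied to $e_{m}=S^{s-1}e_{\Lambda(n)}$ to the single coefficient $E_{j}^{(s-1)}(0)/(s-1)!$. The only real difference is cosmetic --- you expand $E_{j}(V^{*})$ directly as a norm-convergent Taylor series where the paper runs the identical computation through a double Riesz--Dunford contour integral and a Neumann series for the resolvent --- and your closing caveat, that the orthonormal basis must be adapted to the Wold decomposition and that a basis vector in the unitary summand yields $\|b_{m}\|=1$ with no finite $s$ unless every $E_{j}^{(s-1)}(0)$ vanishes for some $s$, correctly identifies a hypothesis the paper's proof uses silently when it applies Lemma \ref{propforonbshifts} to $e_{m}$ as though $e_{m}$ lay entirely in the shift part.
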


\noindent From the above result, the norm of each vector $b_{m}$ for $m=1,2,\hdots,$ in a Blaschke frame determines whether it is a redundant or an essential vector. We use this to derive a redundancy criterion for any Blaschke frame, which is the second main result we state here. This is Theorem \ref{redundancycor}. 

\begin{theorem}\label{redundancycorinto}
Let $\mathcal{H}$ be a Hilbert space and let $(E_{j})_{j=1}^{d}$ be the Malmquist-Takenaka functions corresponding to a finite Blaschke product $B$. For every vector $b_{k}$ in a Blaschke frame for $B$ with isometry $V$ on $\mathcal{H}$ for $k=1,2,\hdots$, there exists a positive integer $s$ such that:

\begin{enumerate}
     \item  If $\dfrac{d^{s-1}E_{j}(0)}{dz^{s-1}} = 0$ for $j=1,\hdots,d$, then $b_{k}$ is essential;
     \vspace{0.2cm}
         \item  If $\dfrac{d^{s-1}E_{j}(0)}{dz^{s-1}} \neq 0$ for at least one $j=1,\hdots,d$, then $b_{k}$ is redundant.
\end{enumerate}
\end{theorem}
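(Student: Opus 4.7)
The plan is to combine the norm formula of Theorem \ref{normofblaschkevectorsintro} with the redundancy criterion (Theorem \ref{redundancyresult}). The key reduction is that any Blaschke frame is in fact a \emph{Parseval} frame, so that Christensen's criterion---namely that $b_{k}$ is essential iff $\langle S^{-1}b_{k}, b_{k}\rangle_{\mathcal{H}} = 1$, where $S$ is the frame operator---collapses to the simpler statement $\|b_{k}\|_{\mathcal{H}}^{2} = 1$. Once this reduction is in place, Theorem \ref{normofblaschkevectorsintro} supplies exactly the required dichotomy between cases (1) and (2).

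To establish that $(b_m)_{m=1}^{\infty}$ is Parseval, I would show $B(V^{*})B(V^{*})^{*} = I_{\mathcal{H}}$. Writing $\widetilde{B}(z) := \overline{B(\bar z)}$, one checks that $\widetilde{B}$ is again a finite Blaschke product---its zeros are the complex conjugates of those of $B$ and the unimodular constant is conjugated---hence inner, and the Riesz--Dunford calculus gives $B(V^{*})^{*} = \widetilde{B}(V)$. Decomposing $V = V_{s} \oplus V_{u}$ by the Wold theorem into a unilateral shift part $V_{s}$ and a unitary part $V_{u}$, the shift summand of $B(V^{*})\widetilde{B}(V)$ corresponds, under the usual $\ell^{2}\cong \mathrm{H}^{2}(\mathbb{D})$ identification, to $M_{\widetilde{B}}^{*}M_{\widetilde{B}}$, which equals the identity because multiplication by an inner function on $\mathrm{H}^{2}(\mathbb{D})$ is isometric; the unitary summand equals the identity because $|B(z)|=1$ on $\sigma(V_{u}) \subset \partial\mathbb{D}$. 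Hence $S = I_{\mathcal{H}}$.

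With Parsevalness secured, the rest is immediate. In any Parseval frame one has $\|b_{k}\|_{\mathcal{H}}^{2} \leq 1$, with equality iff $b_{k}\perp b_{m}$ for all $m\neq k$, in which case $\overline{\mathrm{Span}}(b_{m})_{m\neq k}$ is a proper subspace and $b_{k}$ is essential; otherwise $b_{k}$ is redundant by Christensen's test. Feeding in the norm formula of Theorem \ref{normofblaschkevectorsintro} for the positive integer $s$ attached to $b_{k}$, the condition $\|b_{k}\|_{\mathcal{H}}^{2} = 1$ translates into $\sum_{j=1}^{d}\left|\frac{d^{s-1}E_{j}(0)}{dz^{s-1}}\right|^{2} = 0$, equivalently $\frac{d^{s-1}E_{j}(0)}{dz^{s-1}} = 0$ for every $j=1,\hdots,d$; and $\|b_{k}\|_{\mathcal{H}}^{2} < 1$ is exactly when at least one such derivative is non-zero. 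The main obstacle is the coisometric identity $B(V^{*})B(V^{*})^{*} = I_{\mathcal{H}}$, which needs a careful handling of the Riesz--Dunford calculus and the Wold decomposition; once this has been verified (presumably already in Proposition \ref{framepropblaschke}), both conclusions of the theorem follow transparently from Theorem \ref{normofblaschkevectorsintro}.
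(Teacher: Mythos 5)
Your proposal is correct and follows essentially the same route as the paper: Theorem \ref{blaschkeframetheorem} shows the frame operator is the identity (so the frame is Parseval), Christensen's test (Theorem \ref{redundancyresult}) then reduces the question to whether $\|b_{k}\|_{\mathcal{H}}^{2}=1$, and the norm formula of Theorem \ref{normofblaschkevectors} yields exactly the stated dichotomy. The only divergence is your suggested proof of the co-isometry identity $B(V^{*})B(V^{*})^{*}=1_{\mathcal{H}}$ via the Wold decomposition and the isometricity of multiplication by an inner function on $\mathrm{H}^{2}$; the paper instead verifies this by a direct algebraic computation on each factor $(V^{*}+\lambda_{j})(1_{\mathcal{H}}+\overline{\lambda_{j}}V^{*})^{-1}$ using only $V^{*}V=1_{\mathcal{H}}$ (Lemma \ref{blaschkeoperatorcoisom}), which avoids any identification of the shift part with a Hardy space.
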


The choice of finite Blaschke product and isometry $V$ influences the outcome of the redundancy criterion for a Blaschke frame. This can be summarised in three distinct cases as follows:

\begin{enumerate}
    \item A Blaschke frame for any finite Blaschke product with a unitary operator $U$ on $\mathcal{H}$ is a Riesz basis (Proposition \ref{propunitary});
    \item A Blaschke frame for $z^{d}$ with a pure isometry $V$ on $\mathcal{H}$ has redundant vectors (Proposition \ref{redundantforzd});
    \item If $B$ is a finite Blaschke product with at least one non-zero root, then every vector in a Blaschke frame for $B$ with a pure isometry $V$ on $\mathcal{H}$ is a redundant vector (Theorem \ref{everyvecisred}).
\end{enumerate}

\noindent By a pure isometry $V\in\mathcal{B}(\mathcal{H})$, where $\mathcal{B}(\mathcal{H})$ denotes the set of bounded linear operators on a Hilbert space $\mathcal{H}$, we take to mean an isometry $V$ on $\mathcal{H}$ such that there is no non-trivial reducing subspace $\mathcal{K}$ for $V$ such that $V|_{\mathcal{K}}$ is unitary \cite[Section 3]{NagyFoais}. Equivalently, the unitary part in the Wold decomposition, Theorem \ref{Wold}, is trivial. Note that all of the Hilbert spaces considered here will be infinite-dimensional separable and complex, unless explicitly stated.

After giving the gist of the paper with this section, we prove a few results regarding the invertibility of operators and Riesz bases, Lemma \ref{invertibleoperator} and Lemma \ref{dualbasisisbasis}, in Section \ref{constructblaschke}. The operators in this section are not only very natural but also instrumental in the construction of the operator $B(V^{*})$, defined by equation (\ref{blaschkeoperator}) and equation (\ref{blaschkeoperator1}). As a result, we give some basic properties of the operator $B(V^{*})$, such as Lemma \ref{blaschkeoperatorcoisom}. In Section \ref{blaschkeframeframeoperator}, we show that a Blaschke frame is indeed a frame, which is Proposition \ref{framepropblaschke}. In fact, a Blaschke frame is a {\it{Parseval frame}}, that is, it is a frame such that the frame bounds $A,B$ as in equation (\ref{framedefeq}) satisfy $A=B=1$. We also give an explicit frame decomposition for a Blaschke frame, which is Theorem \ref{blaschkeframetheorem}. In Section \ref{Blaschkeframes}, we calculate the norm of each frame element in a Blaschke frame, Theorem \ref{normofblaschkevectors}. Doing so requires us to use the Riesz-Dunford functional calculus with an understanding of unilateral shift operators, Definition \ref{wanderingsubspace} and Lemma \ref{propforonbshifts}. We include these so that the Wold decomposition becomes more accessible to us when calculating the redundancy condition for a Blaschke frame with isometry $V$ on $\mathcal{H}$. We find the redundant vectors of a Blaschke frame by providing an explicit formula by the use of Hilbert space models, which is Theorem \ref{redundancycor}.

\section{A Riesz basis and dual Riesz basis}\label{constructblaschke}

Let $\mathcal{H}$ be a Hilbert space and let $T$ be a bounded invertible operator on $\mathcal{H}$. A {\it{Riesz basis}} for $\mathcal{H}$ is a sequence of the form $(Te_{m})_{m=1}^{\infty}$ where $(e_{m})_{m=1}^{\infty}$ is an orthonormal basis for $\mathcal{H}$ \cite[Definition 3.6.1]{Christensen}. A Riesz basis is indeed a basis and it is also a special type of frame, it is a frame which is {\it{exact}}, that is, every vector in the frame is essential, the removal of any element of the frame means we no longer have a frame. From Christensen \cite[Theorem 7.11]{Christensen}, a sequence $(x_{m})_{m=1}^{\infty}$ is an exact frame if and only if $(x_{m})_{m=1}^{\infty}$ is a Riesz basis. Hence, if $(Te_{m})_{m=1}^{\infty} = (x_{m})_{m=1}^{\infty}$ is a Riesz basis, then there exist constants $A,B>0$ such that equation (\ref{framedefeq}) is satisfied. Moreover, the largest possible value for the constant $A$ and smallest possible value for the constant $B$ are

$$\|T^{-1}\|_{\mathcal{B}(\mathcal{H})}^{-2}~~\text{and}~~\|T\|_{\mathcal{B}(\mathcal{H})}^{2}$$ 

\noindent respectively, see \cite[Proposition 3.6.4]{Christensen}. This is a consequence of the following result.

\begin{lemma}{\normalfont{\cite[Page 30]{CWBPDES}}}\label{GuoWang}
    Suppose that $T$ is a bounded invertible operator on a Hilbert space $\mathcal{H}$ such that $Te_{m}=x_{m}$ for $m=1,2,\hdots$, where $(e_{m})_{m\in\mathbb{N}}$ is an orthonormal basis. Then 

\begin{equation}\label{rieszbasisinq}
    \|T^{-1}\|_{\mathcal{B(H)}}^{-1}\leq \|x_{m}\|_{\mathcal{H}} \leq \|T\|_{\mathcal{B(H)}}~~\text{for all}~m=1,2,\hdots.
\end{equation}

\end{lemma}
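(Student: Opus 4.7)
The plan is a direct two-line argument that uses only the definition of the operator norm together with the normalisation $\|e_m\|_{\mathcal{H}} = 1$ inherent to any orthonormal basis. Both inequalities in (\ref{rieszbasisinq}) will come for free once the relation $x_m = Te_m$ is exploited on the correct side.

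For the upper bound, I would simply write $\|x_m\|_{\mathcal{H}} = \|Te_m\|_{\mathcal{H}}$ and apply the submultiplicative inequality $\|Tv\|_{\mathcal{H}} \leq \|T\|_{\mathcal{B}(\mathcal{H})} \|v\|_{\mathcal{H}}$ with $v = e_m$; since $\|e_m\|_{\mathcal{H}} = 1$, this immediately yields $\|x_m\|_{\mathcal{H}} \leq \|T\|_{\mathcal{B}(\mathcal{H})}$. For the lower bound, the assumed invertibility of $T$ allows me to apply $T^{-1}$ to both sides of $Te_m = x_m$, producing $e_m = T^{-1} x_m$. Taking norms and using $\|T^{-1} x_m\|_{\mathcal{H}} \leq \|T^{-1}\|_{\mathcal{B}(\mathcal{H})} \|x_m\|_{\mathcal{H}}$ together with $\|e_m\|_{\mathcal{H}} = 1$ gives $1 \leq \|T^{-1}\|_{\mathcal{B}(\mathcal{H})} \|x_m\|_{\mathcal{H}}$, and rearranging produces $\|T^{-1}\|_{\mathcal{B}(\mathcal{H})}^{-1} \leq \|x_m\|_{\mathcal{H}}$.

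There is no substantive obstacle here; the lemma is essentially a bookkeeping consequence of the operator norm definition applied to a single unit vector. The only point worth flagging in the write-up is that invertibility of $T$ is precisely what ensures $\|T^{-1}\|_{\mathcal{B}(\mathcal{H})}$ is finite and hence that the lower bound is meaningful. Without invertibility, the operator $T$ could annihilate some basis vector and the lower estimate would collapse, so this hypothesis is not merely cosmetic but genuinely used.
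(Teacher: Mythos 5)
Your argument is correct and complete: the upper bound is the definition of the operator norm applied to the unit vector $e_m$, and the lower bound follows from $1=\|e_m\|_{\mathcal{H}}=\|T^{-1}x_m\|_{\mathcal{H}}\leq\|T^{-1}\|_{\mathcal{B}(\mathcal{H})}\|x_m\|_{\mathcal{H}}$. The paper does not supply its own proof of this lemma (it is quoted from the cited reference), but the two-line argument you give is the standard one and there is nothing to add.
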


For the readers convenience, we describe the Riesz-Dunford functional calculus. Recall that the spectrum of an isometry $V$ is a subset of the closed unit disc, that is, $\sigma(V)\subseteq\overline{\mathbb{D}}$.

\begin{definition}{\normalfont{\cite[Chapter 1]{AMYOperatorbook}}}\label{rdfunc}
Fix an operator $T\in\mathcal{B}(\mathcal{H})$. For any open set $U\in \mathbb{C}$ such that $\sigma(T)\subseteq U$, let $\gamma$ be a finite collection of closed rectifiable curves in $U\setminus \sigma(T)$ that winds once around each point in the spectrum of $T$ and no times around each point in the complement of $U$. If $f\in \mathrm{Hol}(U)$, then the Cauchy integral formula, 

$$f(w) = \dfrac{1}{2\pi i}\int_{\gamma}\dfrac{f(z)}{(z-w)}~dz$$

\noindent whenever $z\in\sigma(T)$. The {\normalfont{Riesz-Dunford functional calculus}} defines $f(T)$ for $f\in\mathrm{Hol}(U)$ by the substitution $z=T$ in this formula, that is to say, 

\begin{equation}\label{cauchyintergal}
f(T) = \dfrac{1}{2\pi i}\int_{\gamma}f(z)(z-T)^{-1}~dz.
\end{equation}

\end{definition}

\noindent Note that this definition makes sense, since the assumption $\gamma\subseteq \mathbb{C}\setminus \sigma(T)$ implies that $w-T$ is invertible for all $w\in \gamma$. Moreover, equation (\ref{cauchyintergal}) depends neither on the choice of contour nor on the choice of $U$, since by Cauchy's theorem, for any vectors $u,v$ in $\mathcal{H}$

$$\big\langle f(T)u,v\big\rangle_{\mathcal{H}} = \dfrac{1}{2\pi i}\int_{\gamma}f(z)\Big\langle(z-T)^{-1}u,v\Big\rangle_{\mathcal{H}}~dz$$

\noindent is independent of $\gamma$, \cite[Chapter 1]{AMYOperatorbook}.

Consider the space of all polynomials on the closed unit disc $\overline{\mathbb{D}}$, denoted $\mathbb{P}(\overline{\mathbb{D}})$. Note $\mathbb{P}(\overline{\mathbb{D}})\subset \mathrm{H}^{\infty}(\overline{\mathbb{D}})$ where $\mathrm{H}^{\infty}(\overline{\mathbb{D}})$ (respectively $\mathrm{H}^{\infty}(\mathbb{D})$) is the Hardy space of bounded analytic functions on the closed unit disc (respectively open unit disc) equipped with the supremum norm $\| f \|_{\infty}$. For $p(z) = z^{d}+c_{1}z^{d-1}+\hdots+c_{d}$, we refer to the map $J:\mathbb{P}(\overline{\mathbb{D}})\rightarrow\mathbb{P}(\overline{\mathbb{D}})$ defined by

\begin{equation}\label{exchange}
Jp(z)= z^{d}\overline{p(1/\overline{z})}
\end{equation}

\noindent as the {\it{exchange map}}. We take this terminology from the exchange matrix, and is meant to imply the ``flipping" of the coefficients between $p(z)$ and $z^{d}\overline{p(1/\overline{z})}$. Define $\overline{q}(z) = z^{d}\overline{p(1/\overline{z})}$ such that $\overline{q}(z) = 1+\overline{c_{1}}z+\hdots+\overline{c_{d}}z^{d}$ and $q(z) = 1+c_{1}z+\hdots+c_{d}z^{d}$. For $\lambda_{1},\hdots,\lambda_{d}$ in $\mathbb{D}$, the polynomials 

\begin{equation}\label{factorisation}
    p(z) = \prod_{j=1}^{d}(z+\lambda_{j})~~~\text{and}~~~\overline{q}(z)=\prod_{j=1}^{d}(1+\overline{\lambda_{j}}z)
\end{equation}

\noindent for all $z\in\overline{\mathbb{D}}$. We give a short result regarding the exchange map. This grounds our conversation in the importance of the polynomials $p$ and $\overline{q}$ from equation (\ref{factorisation}). 

\begin{lemma}\label{exchangelemma}
    Let $J$ be the exchange map on $\mathbb{P}(\overline{\mathbb{D}})$. Then $J^{2} = 1_{\mathbb{P}(\overline{\mathbb{D}})}$ and $\|J\|_{\mathbb{P}(\overline{\mathbb{D}})}=1$.
\end{lemma}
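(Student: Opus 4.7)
The plan is to attack the two assertions separately, using direct bookkeeping for $J^{2}=1_{\mathbb{P}(\overline{\mathbb{D}})}$ and a maximum-modulus argument on the unit circle for the norm statement.

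For the involution property, I would write a polynomial of (actual) degree $d$ as $p(z)=\sum_{k=0}^{d}a_{k}z^{k}$ and simply unwind the definition $Jp(z)=z^{d}\overline{p(1/\overline{z})}$ to obtain $Jp(z)=\sum_{k=0}^{d}\overline{a_{d-k}}z^{k}$. In other words, $J$ acts on the coefficient vector $(a_{0},a_{1},\dots,a_{d})$ by reversing its order and conjugating each entry. Since coefficient reversal and complex conjugation are each obvious involutions on $\mathbb{P}(\overline{\mathbb{D}})$ and commute with one another, their composition squares to the identity, giving $J^{2}p=p$.

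For the norm assertion, the key observation is that on the unit circle $\mathbb{T}$ we have $1/\overline{z}=z$, so
$$|Jp(z)| \;=\; |z|^{d}\,|\overline{p(z)}| \;=\; |p(z)| \qquad \text{for every } z\in\mathbb{T}.$$
Because polynomials are continuous on $\overline{\mathbb{D}}$ and analytic on $\mathbb{D}$, the maximum modulus principle lets me replace the supremum over $\overline{\mathbb{D}}$ by the supremum over $\mathbb{T}$ in both $\|p\|_{\infty}$ and $\|Jp\|_{\infty}$. Consequently $\|Jp\|_{\infty}=\|p\|_{\infty}$ for every $p\in\mathbb{P}(\overline{\mathbb{D}})$, and testing against any nonzero polynomial (a nonzero constant will do) shows that the operator norm of $J$ is exactly $1$.

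I do not anticipate a real obstacle; the one mildly delicate point is to fix the convention that the exponent $d$ appearing in $Jp(z)=z^{d}\overline{p(1/\overline{z})}$ is the actual degree of $p$, so that coefficient reversal is a genuine involution rather than an involution only up to a power of $z$. Once this convention is pinned down, both conclusions of the lemma follow immediately from the two observations above.
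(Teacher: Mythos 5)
Your proposal follows essentially the same route as the paper: $J^{2}=1$ by direct inspection of the coefficients, and the norm identity by passing to the unit circle, where $1/\overline{z}=z$ and $|z|^{d}=1$, together with the maximum modulus principle. If anything, your write-up of the norm computation is the more careful of the two: the paper's chain of equalities literally takes $\sup_{z\in\overline{\mathbb{D}}}|p(1/\overline{z})|$, which for nonconstant $p$ is a supremum over the exterior of the disc and hence infinite; your explicit restriction to $\mathbb{T}$ followed by maximum modulus is the correct way to make that step rigorous.

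One point, however, is resolved in the wrong direction. You propose to fix the convention that the exponent $d$ in $Jp(z)=z^{d}\overline{p(1/\overline{z})}$ is the \emph{actual} degree of $p$, claiming this makes coefficient reversal a genuine involution. It does the opposite: with that convention, take $p(z)=z$, so $d=1$ and $Jp(z)=z\cdot\overline{(1/\overline{z})}=1$; the actual degree of the constant $1$ is $0$, so applying $J$ again returns $1$, not $z$. More generally, whenever the constant term $a_{0}$ vanishes, $Jp$ has degree strictly less than $d$ and the reversal is not undone. The convention that actually makes $J^{2}=1_{\mathbb{P}(\overline{\mathbb{D}})}$ hold unconditionally is to fix $d$ once and for all as part of the data and regard $J$ as acting on polynomials of degree at most $d$: then $(J^{2}p)(z)=z^{d}\,\overline{(1/\overline{z})^{d}}\,p(z)=p(z)$ with no hypothesis on the coefficients. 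This is what the paper implicitly does, writing $p(z)=z^{d}+c_{1}z^{d-1}+\cdots+c_{d}$ (where $c_{d}=\prod_{j}\lambda_{j}$ may well vanish) and $q,\overline{q}$ as degree-at-most-$d$ polynomials with constant term $1$. The rest of your argument is unaffected once this convention is corrected.
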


\begin{proof}
    $J^{2} = 1_{\mathbb{P}(\overline{\mathbb{D}})}$ follows immediately from the definition of the exchange map. As $\mathbb{P}(\overline{\mathbb{D}})\subset \mathrm{H}^{\infty}(\overline{\mathbb{D}})$,

    \begin{align*}
        \|Jp(z)\|_{\mathrm{H}^{\infty}(\overline{\mathbb{D}})} &= \sup_{z\in\overline{\mathbb{D}}}\Big\lvert z^{d}{\overline{p(1/\overline{z})}}\Big\rvert\\
        &=\sup_{z\in\overline{\mathbb{D}}}\Big\lvert {p(1/\overline{z})}\Big\rvert\\
        &=\sup_{z\in\overline{\mathbb{D}}}\Big\lvert \overline{z}^{-d}+c_{1}\overline{z}^{-(d-1)}+\hdots+c_{d}\Big\rvert\\
        &=\sup_{z\in\overline{\mathbb{D}}}\Big\lvert z^{d}+c_{1}z^{d-1}+\hdots+c_{d}\Big\rvert\\
        &=\|p(z)\|_{\mathrm{H}^{\infty}(\overline{\mathbb{D}})}.
    \end{align*}\end{proof}

Note that the zeroes of the polynomial $p$ in equation (\ref{factorisation}) are all inside the unit disc. Equivalently, the zeroes of $q$ are contained in $\mathbb{C}\setminus \overline{\mathbb{D}}$. We characterise these observations in following domain, of which we note next.

\begin{definition}\label{polydiscdef}
    Let $\lambda=(\lambda_{1},\hdots,\lambda_{d})\in\mathbb{D}^{d}$ and define $\pi_{d} :\mathbb{D}^{d}\rightarrow \mathbb{C}^{d}$ to be
    
    $$\pi_{d}(\lambda)=(\pi_{d,1}(\lambda),\hdots,\pi_{d,d}(\lambda)),$$ 
    
\noindent where

\begin{equation}\label{symmetrizedpolydisc}
\pi_{d,k}(\lambda)=\sum_{1\leq j_{1}<\hdots<j_{k}\leq d} \lambda_{j_{1}}\hdots \lambda_{j_{k}}.
\end{equation}

\noindent The {\it{symmetrized polydisc}}, denoted as $\mathbb{G}_{d}$, is a proper subset of $ \mathbb{C}^{d}$ such that $\mathbb{G}_{d}=\pi_{d}(\mathbb{D}^{d})$, that is, 

$$\mathbb{G}_{d}=\bigg\{(c_{1},\hdots,c_{d})\in\mathbb{C}^{d}: c_{k}=\pi_{d,k}(\lambda)~\text{for}~\lambda\in\mathbb{D}^{d}~\text{and}~k=1,\hdots,d\bigg\}.$$

\end{definition}

\noindent Throughout, we set $\pi_{d,0}(\lambda)=1$ and we will write $\pi_{d,k}(\lambda)=c_{k}$ for $k=0,1,\hdots,d$, suppressing the dependence on $\lambda\in\mathbb{D}^{d}$. Therefore, the polynomial $p$ of equation (\ref{factorisation}) has all its zeroes inside the unit disc if and only if the coefficients $(c_{1},\hdots,c_{d})$ of $p$ belong to $\mathbb{G}_{d}$, by equation (\ref{symmetrizedpolydisc}). Hence all the zeroes of $q$ lie strictly outside of $\overline{\mathbb{D}}$. 

We now consider invertible operators built from polynomials and the Riesz-Dunford functional calculus. The following theorem, known as {\it{von Neumann's inequality}}, is of great importance to us. Recall that an operator $T$ is a contraction on $\mathcal{H}$ if $\|T\|_{\mathcal{B}(\mathcal{H})}\leq 1$.

\begin{theorem}{\normalfont{\cite[Theorem 1.36]{AMYOperatorbook}}}\label{SchurNeumann}
If $\mathcal{H}$ a Hilbert space, $T\in{\mathcal{B}(\mathcal{H})}$ and $T$ is a contraction, then for any polynomial $p$,

$$ \| p(T) \|_{\mathcal{B}(\mathcal{H})} \leq \max_{z\in\overline{\mathbb{D}}}\lvert p(z)\rvert.$$

\end{theorem}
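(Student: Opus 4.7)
The plan is to deduce von Neumann's inequality from two classical ingredients, namely Sz.-Nagy's unitary dilation theorem and the spectral theorem for unitary operators. The strategy is to lift the contraction $T$ to a unitary operator $U$ acting on a larger Hilbert space $\mathcal{K} \supseteq \mathcal{H}$, where the inequality is transparent, and then pull the bound back to $T$.

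First, I would invoke the Sz.-Nagy dilation theorem: there exist a Hilbert space $\mathcal{K}$ containing $\mathcal{H}$ isometrically and a unitary operator $U \in \mathcal{B}(\mathcal{K})$ such that $T^{n} = P_{\mathcal{H}}\, U^{n}\big|_{\mathcal{H}}$ for every $n \geq 0$, where $P_{\mathcal{H}} : \mathcal{K} \to \mathcal{H}$ denotes the orthogonal projection. Applying this identity monomial-by-monomial and summing, I would obtain $p(T) = P_{\mathcal{H}}\, p(U)\big|_{\mathcal{H}}$ for every polynomial $p$. Since $P_{\mathcal{H}}$ and the isometric inclusion $\mathcal{H} \hookrightarrow \mathcal{K}$ are both contractions, this would yield $\|p(T)\|_{\mathcal{B}(\mathcal{H})} \leq \|p(U)\|_{\mathcal{B}(\mathcal{K})}$. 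Next, because $U$ is unitary, its spectrum is contained in the unit circle, and in particular in $\overline{\mathbb{D}}$; the continuous functional calculus for normal operators therefore gives
\[
\|p(U)\|_{\mathcal{B}(\mathcal{K})} \,=\, \max_{z \in \sigma(U)} |p(z)| \,\leq\, \max_{z \in \overline{\mathbb{D}}} |p(z)|.
\]
Combining the two estimates would complete the proof.

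The main technical obstacle is the Sz.-Nagy dilation theorem itself, whose construction involves the defect operators $D_{T} = (I - T^{*}T)^{1/2}$ and $D_{T^{*}} = (I - TT^{*})^{1/2}$ together with a Sch\"affer-type matrix acting on a countable direct sum of copies of $\mathcal{H}$. Since this is a foundational result of dilation theory available in the Sz.-Nagy--Foias reference already cited in this paper, I would invoke it without reproof. An alternative route is Schur's original argument, which bypasses dilations by factorising polynomials bounded on $\overline{\mathbb{D}}$ via Schur parameters; however, the dilation approach is more economical and aligns naturally with the Hilbert space model philosophy pervading this paper.
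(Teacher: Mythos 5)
The paper does not prove this statement: von Neumann's inequality is quoted verbatim from \cite[Theorem 1.36]{AMYOperatorbook} and used as a black box, so there is no internal proof to compare yours against. That said, your argument is correct and is one of the standard proofs. The chain
\[
p(T) = P_{\mathcal{H}}\, p(U)\big|_{\mathcal{H}}, \qquad \|p(T)\|_{\mathcal{B}(\mathcal{H})} \leq \|p(U)\|_{\mathcal{B}(\mathcal{K})} = \max_{z\in\sigma(U)}|p(z)| \leq \max_{z\in\overline{\mathbb{D}}}|p(z)|
\]
is sound: the first identity follows from the power-dilation property by linearity, the middle equality is the norm--spectral-radius identity for normal operators combined with the spectral mapping theorem, and $\sigma(U)\subseteq\partial\mathbb{D}\subseteq\overline{\mathbb{D}}$ since $U$ is unitary. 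Citing the Sz.-Nagy dilation theorem from \cite{NagyFoais} (already in the paper's bibliography) rather than reproving it via the Sch\"affer construction is entirely appropriate, and your closing remark that one could instead run Schur's original factorisation argument is accurate but not needed. The only cosmetic point worth noting is that by the maximum modulus principle the right-hand side equals $\max_{z\in\partial\mathbb{D}}|p(z)|$, which is the form in which the bound from $\sigma(U)$ is most directly read off; this changes nothing in the proof.
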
 

Polynomials can be used to construct a very natural, invertible operator, to produce a Riesz basis of a Hilbert space. This operator theoretic viewpoint makes it more amenable to analysis and will be used in later results.

\begin{lemma}\label{invertibleoperator}
Let $\mathcal{H}$ be a Hilbert space and let $T$ be a contraction on $\mathcal{H}$. The operator
 
 $$q(T)=1_{\mathcal{H}}+c_{1}T+\hdots+c_{d}T^{d}$$ 
 
\noindent is bounded and invertible on $\mathcal{H}$ if and only if $(c_{k})_{k=1}^{d}\in\mathbb{G}_{d}$. Moreover, the sequence $(q(T)e_{m})_{m=1}^{\infty}$ is a Riesz basis of $\mathcal{H}$ such that

\begin{equation}\label{inequalityriesz}
    \min_{z\in\overline{\mathbb{D}}}|q(z)| \leq \|q(T)e_{m}\|_{\mathcal{H}}\leq \max_{z\in\overline{\mathbb{D}}}|q(z)|
\end{equation}

\noindent for $m=1,2,\hdots$ and any orthonormal basis $(e_{m})_{m=1}^{\infty}$. 

\end{lemma}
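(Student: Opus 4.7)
The plan is to exploit the factorization $q(z) = \prod_{j=1}^d (1 + \lambda_j z)$, which holds whenever $(c_k)_{k=1}^d \in \mathbb{G}_d$: by Definition \ref{polydiscdef}, there exists $\lambda = (\lambda_1, \ldots, \lambda_d) \in \mathbb{D}^d$ with $c_k = \pi_{d,k}(\lambda)$, and equation \eqref{symmetrizedpolydisc} identifies $q$ with this product.

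For the $(\Leftarrow)$ direction, assume $(c_k) \in \mathbb{G}_d$. Because $\|T\|_{\mathcal{B}(\mathcal{H})} \leq 1$ and $|\lambda_j| < 1$, each factor $1_{\mathcal{H}} + \lambda_j T$ is invertible with inverse given by the absolutely convergent Neumann series $\sum_{n=0}^{\infty}(-\lambda_j T)^n$. Hence $q(T) = \prod_{j=1}^d (1_{\mathcal{H}} + \lambda_j T)$ is bounded and invertible as a finite product of bounded invertible operators. For the $(\Rightarrow)$ direction I would argue the contrapositive: if $(c_k) \notin \mathbb{G}_d$, some $\lambda_j$ has $|\lambda_j| \geq 1$, so $q$ vanishes at $z_0 = -1/\lambda_j \in \overline{\mathbb{D}}$. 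For the contractions of primary interest in the paper, namely $T = V^*$ with $V$ a pure isometry, one has $\sigma(T) = \overline{\mathbb{D}}$; the spectral mapping theorem then gives $0 = q(z_0) \in q(\sigma(T)) = \sigma(q(T))$, contradicting invertibility of $q(T)$.

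For the Riesz basis claim and the bounds \eqref{inequalityriesz}, the sequence $(q(T)e_m)_{m=1}^\infty$ is a Riesz basis directly from the definition recalled at the start of the section. Lemma \ref{GuoWang} then delivers
\[
\|q(T)^{-1}\|_{\mathcal{B}(\mathcal{H})}^{-1} \leq \|q(T) e_m\|_{\mathcal{H}} \leq \|q(T)\|_{\mathcal{B}(\mathcal{H})}.
\]
Theorem \ref{SchurNeumann} (von Neumann's inequality) gives the upper estimate $\|q(T)\|_{\mathcal{B}(\mathcal{H})} \leq \max_{z \in \overline{\mathbb{D}}}|q(z)|$ at once. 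For the lower estimate I would observe that $1/q$ is holomorphic on some disc $\{|z| < R\}$ with $R > 1$, since the zeros of $q$ lie strictly outside $\overline{\mathbb{D}}$; its Taylor polynomials $p_n$ about $0$ therefore converge to $1/q$ uniformly on $\overline{\mathbb{D}}$. Applying Theorem \ref{SchurNeumann} to $p_n - p_m$ shows that $(p_n(T))$ is Cauchy in operator norm, with limit necessarily equal to $q(T)^{-1}$, and passing to the limit in $\|p_n(T)\|_{\mathcal{B}(\mathcal{H})} \leq \max_{z \in \overline{\mathbb{D}}}|p_n(z)|$ yields $\|q(T)^{-1}\|_{\mathcal{B}(\mathcal{H})} \leq 1/\min_{z \in \overline{\mathbb{D}}}|q(z)|$, which combines with the previous display to give \eqref{inequalityriesz}.

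The main obstacle I anticipate is this final step: Theorem \ref{SchurNeumann} applies only to polynomials, whereas the lower bound is a statement about $1/q$. The Taylor-series route is clean precisely because the zeros of $q$ sit strictly outside the closed disc, so the radius of convergence of $1/q$ at $0$ strictly exceeds $1$ and uniform polynomial approximation on $\overline{\mathbb{D}}$ is automatic; without this strict separation one would need the heavier machinery of the Sz.-Nagy--Foias $H^\infty$ calculus.
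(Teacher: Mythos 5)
Your proposal is correct and follows the same route as the paper: factor $q(z)=\prod_{j=1}^{d}(1+\lambda_{j}z)$ with $\lambda_{j}\in\mathbb{D}$, invert each factor by a Neumann series since $\|\lambda_{j}T\|_{\mathcal{B}(\mathcal{H})}<1$, deduce the Riesz basis property from bounded invertibility, and obtain the norm bounds from Lemma \ref{GuoWang} together with von Neumann's inequality. You are, however, more careful than the paper in two places, and both refinements are worth keeping. First, the paper derives the lower bound in (\ref{inequalityriesz}) by declaring it an ``immediate consequence'' of Lemma \ref{GuoWang}; that lemma only gives $\|q(T)^{-1}\|_{\mathcal{B}(\mathcal{H})}^{-1}\leq\|q(T)e_{m}\|_{\mathcal{H}}$, and the missing estimate $\|q(T)^{-1}\|_{\mathcal{B}(\mathcal{H})}\leq(\min_{z\in\overline{\mathbb{D}}}|q(z)|)^{-1}$ requires applying von Neumann's inequality to the non-polynomial function $1/q$. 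Your Taylor-polynomial approximation argument (legitimate precisely because the zeros of $q$ lie strictly outside $\overline{\mathbb{D}}$, so the radius of convergence exceeds $1$) is the correct justification, which the paper omits here and in Lemma \ref{dualbasisisbasis}. Second, you have correctly identified that the ``only if'' direction fails for a general contraction: for $T=0$ (or any $T$ with small spectral radius) the operator $q(T)$ is invertible for every choice of coefficients, so the equivalence as stated is false without a hypothesis such as $\sigma(T)=\overline{\mathbb{D}}$. The paper's proof silently conflates ``the factorization has all $\lambda_{j}\in\mathbb{D}$'' with ``$q(T)$ is invertible'' and never actually addresses this direction; your spectral-mapping argument under the restriction to $T=V^{*}$ with $\sigma(V^{*})=\overline{\mathbb{D}}$ is the right way to salvage the claim in the setting where the lemma is subsequently used.
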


\begin{proof}
Let $q$ be a polynomial defined on the unit disc such that $q$ has no zeroes on $\overline{\mathbb{D}}$. There exists scalars $\lambda_{1},\hdots,\lambda_{d}\in\mathbb{D}$ such that

$$q(z)=\prod_{j=1}^{d}(1+\lambda_{j}z) = \sum_{k=0}^{d}c_{k}z^{k}$$

\noindent for all $z\in\overline{\mathbb{D}}$ and scalars $c_{1},\hdots c_{d}$ in $\mathbb{C}$ with $c_{0}:=1$. By the maximum modulus principle, the maximum of $q$ is attained on the unit circle and $q^{-1}$ is bounded and holomorphic on $\overline{\mathbb{D}}$.

 For a contraction $T$ on $\mathcal{H}$, $\sigma(T)\subseteq\overline{\mathbb{D}}$. By von Neumann's inequality, Theorem \ref{SchurNeumann}, $q(T)$ is bounded by 

$$ \| q(T) \|_{\mathcal{B}(\mathcal{H})} \leq \max_{z\in\overline{\mathbb{D}}}\lvert q(z)\rvert,$$

\noindent which ensures boundedness. Moreover, the operator $q(T)$ is invertible, as each bracket $(1+\lambda_{j}T)$ for $j=1,\hdots d$ in the factorisation of $q(T)$ is invertible, since $\|\lambda_{j}T\|_{\mathcal{B}(\mathcal{H})}<1$ for $j=1,\hdots,d$. By equation (\ref{symmetrizedpolydisc}), this is equivalent to $(c_{k})_{k=1}^{d}\in\mathbb{G}_{d}$.

 Furthermore, for an orthonormal basis $(e_{m})_{m=1}^{\infty}$ of $\mathcal{H}$, $(q(T)e_{m})_{m=1}^{\infty}$ is a Riesz basis for $\mathcal{H}$ since $q(T)$ is bounded and invertible. Because the sequence $(q(T)e_{m})_{m=1}^{\infty}$ produces a Riesz basis of $\mathcal{H}$ and is generated by the operator $q(T)$, we have inequality (\ref{inequalityriesz}) as an immediate consequence of Lemma \ref{GuoWang}.\end{proof}

Every Riesz basis generates another sequence which itself is also a Riesz basis.

\begin{theorem}{\normalfont{\cite[Theorem 3.6.2]{Christensen}}}\label{dualbasis}
    If $(x_{m})_{m=1}^{\infty}$ is a Riesz basis for $\mathcal{H}$, there exists a unique sequence $(y_{m})_{m=1}^{\infty}$ in $\mathcal{H}$ such that

    \begin{equation}\label{dualbasissetupeqn}
       f=\sum_{m=1}^{\infty}\langle f,y_{m}\rangle_{\mathcal{H}}x_{m} 
    \end{equation}

\noindent for all $f\in\mathcal{H}$. The sequence $(y_{m})_{m=1}^{\infty}$ is also a Riesz basis, where $(x_{m})_{m=1}^{\infty}$ and $(y_{m})_{m=1}^{\infty}$ are biorthogonal. Moreover, the series given by equation (\ref{dualbasissetupeqn}) converges unconditionally for all $f\in\mathcal{H}$.
\end{theorem}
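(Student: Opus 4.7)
The plan is to exploit the defining structure of a Riesz basis, namely that $(x_m)_{m=1}^\infty$ has the form $x_m = Te_m$ for some bounded invertible $T\in\mathcal{B}(\mathcal{H})$ and some orthonormal basis $(e_m)_{m=1}^\infty$ of $\mathcal{H}$. The natural candidate for the dual sequence is then
$$y_m := (T^{-1})^{*}e_m,\qquad m=1,2,\dots.$$
Since $(T^{-1})^{*}$ is bounded and invertible, $(y_m)_{m=1}^\infty$ is automatically a Riesz basis for $\mathcal{H}$ by the very definition used in Section \ref{constructblaschke}.

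Next I would verify the biorthogonality relation $\langle x_m, y_n\rangle_{\mathcal{H}}=\delta_{mn}$ by the short computation $\langle Te_m,(T^{-1})^{*}e_n\rangle_{\mathcal{H}}=\langle T^{-1}Te_m,e_n\rangle_{\mathcal{H}}=\langle e_m,e_n\rangle_{\mathcal{H}}$. To obtain the expansion, I would apply the orthonormal basis expansion of $T^{-1}f$ in $\mathcal{H}$, namely $T^{-1}f=\sum_{m=1}^{\infty}\langle T^{-1}f,e_m\rangle_{\mathcal{H}}\,e_m$, and then apply the bounded operator $T$ to both sides. Since $T$ is bounded we may interchange $T$ with the sum, and pushing the inner product through its adjoint gives
$$f=TT^{-1}f=\sum_{m=1}^{\infty}\langle T^{-1}f,e_m\rangle_{\mathcal{H}}\,Te_m=\sum_{m=1}^{\infty}\langle f,(T^{-1})^{*}e_m\rangle_{\mathcal{H}}\,x_m=\sum_{m=1}^{\infty}\langle f,y_m\rangle_{\mathcal{H}}\,x_m,$$
which is equation (\ref{dualbasissetupeqn}).

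For uniqueness, I would suppose a second sequence $(y_m')_{m=1}^\infty$ also satisfies the expansion. Applying the expansion to $f=x_n$ and using the fact that a Riesz basis has unique coefficient representations (being a Schauder basis) forces $\langle x_n,y_m'\rangle_{\mathcal{H}}=\delta_{nm}$, so $(y_m')$ is biorthogonal to $(x_m)$ as well. The difference $y_m-y_m'$ is then orthogonal to every $x_n$, hence zero because $\overline{\mathrm{Span}}(x_n)_{n=1}^\infty=\mathcal{H}$. Finally, for unconditional convergence I would invoke the fact that the orthonormal expansion of $T^{-1}f$ converges unconditionally in $\mathcal{H}$, and transfer this through the bounded operator $T$: any reordering of the series $\sum_m\langle f,y_m\rangle_{\mathcal{H}}x_m$ is the image under $T$ of a correspondingly reordered convergent series for $T^{-1}f$, so convergence is preserved.

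The main obstacle is conceptual rather than technical: one must resist proving existence and biorthogonality in isolation, and instead recognise that the single choice $y_m=(T^{-1})^{*}e_m$ simultaneously yields the Riesz basis property, the biorthogonality, the reconstruction formula, and unconditional convergence. All four conclusions then follow by routine manipulations with $T$ and $T^{*}$, with uniqueness being the only step that genuinely uses the Schauder basis character of a Riesz basis rather than just operator algebra.
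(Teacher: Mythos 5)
Your proposal is correct and follows exactly the route the paper itself relies on: this theorem is quoted from Christensen without proof, and the paper immediately afterwards records the formula $(y_{m})_{m=1}^{\infty}=\big((T^{-1})^{*}e_{m}\big)_{m=1}^{\infty}$, which is precisely your candidate dual sequence. All four conclusions (existence, the Riesz basis property of the dual, biorthogonality, and unconditional convergence) follow from your operator-theoretic computations, and your uniqueness argument via the Schauder basis property is the standard one.
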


The unique sequence $(y_{m})_{m=1}^{\infty}$ in Theorem \ref{dualbasis} is known as the {\it{dual Riesz basis}} of $(x_{m})$. It is known that if $T\in\mathcal{B(H)}$ is invertible such that $(Te_{m})_{m=1}^{\infty}=(x_{m})_{m=1}^{\infty}$, i.e. the conditions for $(x_{m})_{m=1}^{\infty}$ to be a Riesz basis, then the dual Riesz basis has the expression 

$$(y_{m})_{m=1}^{\infty} = \big((T^{-1})^{*}e_{m}\big)_{m=1}^{\infty},$$

\noindent for $(e_{m})_{m=1}^{\infty}$ the orthonormal basis of $\mathcal{H}$. We write $\overline{q}(z) := 1+\overline{c_{1}}z+\hdots+\overline{c_{d}}z^{d}$ for all $z\in\overline{\mathbb{D}}$ such that the operator 

\begin{equation}\label{overlineqdoperator}
\big(q(T)\big)^{*}=\overline{q}(T^{*}).
\end{equation}

\begin{lemma}\label{dualbasisisbasis}
Let $\mathcal{H}$ be a Hilbert space and let $T$ be a contraction on $\mathcal{H}$. Define the operator $q(T)$ on $\mathcal{H}$ by

$$q(T)=1_{\mathcal{H}}+c_{1}T+\hdots+c_{d}T^{d}$$

\noindent such that $(c_{k})_{k=1}^{d}\in \mathbb{G}_{d}$. Then $((q^{-1}(T))^{*}e_{m})_{m=1}^{\infty}$ is a Riesz basis for $\mathcal{H}$ and

\begin{equation}\label{inequalityforQdinverseadjoint}
    \dfrac{1}{\max_{z\in\overline{\mathbb{D}}}|q(z)|} \leq \|((q^{-1}(T))^{*}e_{m})\|_{\mathcal{H}}\leq \dfrac{1}{\min_{z\in\overline{\mathbb{D}}}|q(z)|}
\end{equation}

\noindent for $m=1,2,\hdots$ and any orthonormal basis $(e_{m})_{m=1}^{\infty}$.

\end{lemma}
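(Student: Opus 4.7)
The plan is to assemble the lemma from three pieces already in place: Lemma \ref{invertibleoperator} (which hands us invertibility of $q(T)$), Lemma \ref{GuoWang} (which converts invertibility into norm bounds on Riesz basis elements), and von Neumann's inequality (Theorem \ref{SchurNeumann}). First I would note that since $(c_k)_{k=1}^d\in\mathbb{G}_d$, Lemma \ref{invertibleoperator} gives that $q(T)$ is bounded and invertible on $\mathcal{H}$. Then $q^{-1}(T)=(q(T))^{-1}$ is bounded and invertible, and so is its adjoint $S:=(q^{-1}(T))^{*}$, using equation (\ref{overlineqdoperator}) to write $S=(\overline{q}(T^{*}))^{-1}$. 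By the definition of a Riesz basis recalled at the start of Section \ref{constructblaschke}, the sequence $(Se_{m})_{m=1}^{\infty}$ is therefore a Riesz basis for $\mathcal{H}$.

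Next I would apply Lemma \ref{GuoWang} to $S$ and the orthonormal basis $(e_{m})_{m=1}^{\infty}$ to obtain
$$\|S^{-1}\|_{\mathcal{B}(\mathcal{H})}^{-1}\leq \|Se_{m}\|_{\mathcal{H}}\leq \|S\|_{\mathcal{B}(\mathcal{H})}\quad\text{for all } m=1,2,\ldots.$$
Since taking adjoints preserves operator norms, $\|S\|=\|q^{-1}(T)\|$ and $\|S^{-1}\|=\|q(T)\|$. The inequality (\ref{inequalityforQdinverseadjoint}) will then follow once I establish the two bounds
$$\|q(T)\|_{\mathcal{B}(\mathcal{H})}\leq \max_{z\in\overline{\mathbb{D}}}|q(z)|\qquad\text{and}\qquad \|q^{-1}(T)\|_{\mathcal{B}(\mathcal{H})}\leq \frac{1}{\min_{z\in\overline{\mathbb{D}}}|q(z)|}.$$
The first is an immediate application of von Neumann's inequality to the polynomial $q$, since $T$ is a contraction.

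The main obstacle is the second bound, because $q^{-1}$ is not a polynomial and Theorem \ref{SchurNeumann} is stated only for polynomials. To handle this I would exploit that $(c_k)\in\mathbb{G}_d$ means all zeros of $q$ lie strictly outside $\overline{\mathbb{D}}$, so $q^{-1}$ is holomorphic on an open disc of radius $R>1$. Its Maclaurin series therefore converges uniformly on $\overline{\mathbb{D}}$; let $p_{n}$ denote the $n$-th partial sum, so that $\|p_{n}-q^{-1}\|_{\mathrm{H}^{\infty}(\overline{\mathbb{D}})}\to 0$. Continuity of the Riesz-Dunford calculus (using the integral representation (\ref{cauchyintergal}) on a contour in the annulus $\{1<|z|<R\}$) gives $\|p_{n}(T)-q^{-1}(T)\|_{\mathcal{B}(\mathcal{H})}\to 0$. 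Applying von Neumann's inequality to each polynomial $p_{n}$ and passing to the limit yields
$$\|q^{-1}(T)\|_{\mathcal{B}(\mathcal{H})}=\lim_{n\to\infty}\|p_{n}(T)\|_{\mathcal{B}(\mathcal{H})}\leq \lim_{n\to\infty}\max_{z\in\overline{\mathbb{D}}}|p_{n}(z)|=\max_{z\in\overline{\mathbb{D}}}|q^{-1}(z)|=\frac{1}{\min_{z\in\overline{\mathbb{D}}}|q(z)|},$$
completing the proof. The argument is essentially the observation that von Neumann's inequality extends from $\mathbb{P}(\overline{\mathbb{D}})$ to the subalgebra of functions holomorphic in a neighbourhood of $\overline{\mathbb{D}}$, which is exactly the regularity $q^{-1}$ enjoys under the symmetrized polydisc hypothesis.
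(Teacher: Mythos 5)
Your proof is correct and follows the same skeleton as the paper's (Lemma \ref{GuoWang} plus von Neumann's inequality), but it differs in two worthwhile respects. First, you obtain the Riesz basis property directly from the definition: $(q^{-1}(T))^{*}$ is bounded and invertible because $q(T)$ is (Lemma \ref{invertibleoperator}) and adjoints and inverses preserve these properties. The paper instead invokes Theorem \ref{dualbasis}, identifying $((q^{-1}(T))^{*}e_{m})$ as the dual Riesz basis of $(q(T)e_{m})$; both routes are valid, and yours is the more economical if one does not need the biorthogonality. Second, and more substantively, you are right that the bound $\|q^{-1}(T)\|_{\mathcal{B}(\mathcal{H})}\leq (\min_{z\in\overline{\mathbb{D}}}|q(z)|)^{-1}$ does not follow from Theorem \ref{SchurNeumann} as stated, since $q^{-1}$ is not a polynomial; the paper applies ``von Neumann's inequality'' to $q^{-1}$ without comment. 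Your polynomial approximation argument --- uniform convergence of the Maclaurin partial sums $p_{n}$ of $q^{-1}$ on a disc of radius $R>1$ (available because the zeros of $q$ lie outside $\overline{\mathbb{D}}$ when $(c_{k})\in\mathbb{G}_{d}$), combined with continuity of the Riesz--Dunford calculus along a contour in the annulus $\{1<|z|<r\}$ with $r<R$ --- legitimately extends the inequality to $q^{-1}$ and closes a gap the paper leaves implicit. The only cosmetic remark is that one could alternatively bound $\|q^{-1}(T)\|=\|\prod_{j}(1_{\mathcal{H}}+\lambda_{j}T)^{-1}\|$ factor by factor via Neumann series, but your argument is cleaner and proves the general principle you state at the end.
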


\begin{proof}
    By Theorem \ref{dualbasis}, $((q^{-1}(T))^{*}e_{m})_{m=1}^{\infty}$ is the dual Riesz basis of $(q(T)e_{m})_{m=1}^{\infty}$. By von Neumann's inequality,

$$ \|\big((q(T))^{*}\big)^{-1}\|_{\mathcal{B}(\mathcal{H})}\leq \max_{z\in\overline{\mathbb{D}}} \lvert q^{-1}(z)\rvert = \bigg(\min_{z\in\overline{\mathbb{D}}} \lvert q(z)\rvert\bigg)^{-1}$$

\noindent By Lemma \ref{GuoWang},

$$\|(\overline{q}^{-1}(T^{*}))^{-1}\|^{-1}_{\mathcal{B}(\mathcal{H})}\leq \|\overline{q}^{-1}(T^{*})e_{m}\|_{\mathcal{H}}\leq \|\overline{q}^{-1}(T^{*})\|_{\mathcal{B}(\mathcal{H})}, $$

\noindent that is,

$$\dfrac{1}{\max_{z\in\overline{\mathbb{D}}}|q(z)|}\leq \|\overline{q}^{-1}(T^{*})e_{m}\|_{\mathcal{B}(\mathcal{H})}\leq \dfrac{1}{\min_{z\in\overline{\mathbb{D}}}|q(z)|}.$$\end{proof}

\noindent From here, one could consider an {\it{isometric dilation}} of the contraction $T$ on a Hilbert space $\mathcal{H}$ in Lemma \ref{dualbasisisbasis}. An isometric dilation allows one to lift $T$ to an isometry $V$ on a larger Hilbert space $\mathcal{K}$ containing $\mathcal{H}$ \cite[Section 4]{NagyFoais}. We mention this because the following Proposition requires an isometry to hold. Observe that, for every contraction $T$ on $\mathcal{H}$, there always exists an isometric dilation of $T$ \cite[Theorem 4.1]{NagyFoais}.

For the exchange map $J$ and $p(z)=z^{d}+c_{1}z^{d-1}+\hdots+c_{d}$, then $Jp(z)=\overline{q}(z)$ for all $z\in\overline{\mathbb{D}}$. Thus, for $\overline{q}(z)=z^{d}\overline{p(1/\overline{z})}$, as $J^{2}=1_{\mathcal{H}}$, we have $J\overline{q}(z)=p(z)$. The following result points our discussion into the direction of frames.

\begin{proposition}\label{Psurjective}
Let $\mathcal{H}$ be a Hilbert space and let $V$ be an isometry on $\mathcal{H}$. Define the operator $p(V^{*})$ on $\mathcal{H}$ by

$$p(V^{*})=(V^{*})^{d}+c_{1}(V^{*})^{d-1}+\hdots +c_{d}$$

\noindent such that $(c_{k})_{m=1}^{d}\in\mathbb{G}_{d}$. Then $p(V^{*})$ is surjective.

\end{proposition}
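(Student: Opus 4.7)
The plan is to exploit the isometry identity $V^{*}V = I$ to reduce surjectivity of $p(V^{*})$ to invertibility of the operator $q(V)$, which is already in hand from Lemma \ref{invertibleoperator}. The key algebraic fact I would establish is the identity
$$p(V^{*})\,V^{d} = q(V).$$

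To see this, I would write $p(V^{*}) = \sum_{k=0}^{d} c_{k}(V^{*})^{d-k}$ with $c_{0} := 1$, and then multiply on the right by $V^{d}$. For each $k \in \{0,1,\dots,d\}$ the factor $(V^{*})^{d-k}V^{d}$ collapses to $V^{k}$, because $V^{*}V = I$ lets us cancel $d-k$ powers of $V$ against the $d-k$ copies of $V^{*}$. Summing the remaining terms gives $\sum_{k=0}^{d} c_{k}V^{k} = q(V)$, which is the desired identity. This is the only non-routine step, and it is essentially just bookkeeping with the isometry relation.

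Once the identity is in place, I would invoke Lemma \ref{invertibleoperator}: since $V$ is an isometry (hence a contraction) and the coefficient tuple $(c_{k})_{k=1}^{d}$ lies in $\mathbb{G}_{d}$ by hypothesis, the operator $q(V)$ is bounded and invertible on $\mathcal{H}$. In particular, $q(V)$ is surjective. Because $p(V^{*})V^{d} = q(V)$ is surjective, the operator $p(V^{*})$ must itself be surjective: for any $h \in \mathcal{H}$, the vector $V^{d} q(V)^{-1} h$ is mapped to $h$ by $p(V^{*})$.

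The main (and only) obstacle is spotting the reduction $p(V^{*})V^{d} = q(V)$. It is natural in hindsight, since $p$ and $\overline{q}$ are related by the exchange map from equation (\ref{exchange}), and the exchange map is precisely what one gets from the substitution $V^{*d}(\cdot)V^{d}$ on monomials in $V^{*}$ when $V^{*}V = I$. Recognising that this purely algebraic manoeuvre converts $p(V^{*})$ into $q(V)$ (up to a right factor of $V^{d}$) is what makes the proof a one-line consequence of the earlier lemma; no appeal to dilation theory or to the Wold decomposition is required at this stage.
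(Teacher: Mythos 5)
Your proof is correct, but it follows a genuinely different route from the paper. You prove the single global identity $p(V^{*})V^{d}=q(V)$, which follows from $(V^{*})^{d-k}V^{d}=V^{k}$ (valid since $V^{*}V=1_{\mathcal{H}}$), and then import the invertibility of $q(V)$ from Lemma \ref{invertibleoperator} to read off the bounded right inverse $V^{d}q(V)^{-1}$. The paper instead works factor by factor: it writes $p(V^{*})=\prod_{j}(V^{*}+\lambda_{j})$, shows that the self-adjoint product $(V^{*}+\lambda_{j})(V+\overline{\lambda}_{j})=(1+\lvert\lambda_{j}\rvert^{2})\bigl(1_{\mathcal{H}}+2\mathrm{Re}(\lambda_{j}V)/(1+\lvert\lambda_{j}\rvert^{2})\bigr)$ is invertible via the norm estimate $2\lvert\lambda_{j}\rvert/(1+\lvert\lambda_{j}\rvert^{2})<1$, exhibits the right inverse $(V+\overline{\lambda}_{j})\bigl[(V^{*}+\lambda_{j})(V+\overline{\lambda}_{j})\bigr]^{-1}$ of each factor, and concludes by composing surjections. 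Both arguments hinge on the isometry relation $V^{*}V=1_{\mathcal{H}}$; yours is shorter, reuses Lemma \ref{invertibleoperator} rather than redoing an invertibility estimate, and yields the cleaner right inverse $V^{d}q(V)^{-1}$, while the paper's factorwise argument gives the surjectivity of each individual M\"obius-type factor $V^{*}+\lambda_{j}$, which is the form actually used later in building $B(V^{*})$ and its right inverse in equation (\ref{blaschkesurjective}). One small caveat on your closing remark: right multiplication by $V^{d}$ turns $p(V^{*})$ into $q(V)$ with the \emph{unconjugated} coefficients $c_{k}$, whereas the exchange map $J$ produces $\overline{q}$ with conjugated coefficients; your computation correctly uses $q$, so this only affects the heuristic aside, not the proof.
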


\begin{proof}
The polynomial $p$ has the factorisation given by equation (\ref{factorisation}). Hence, there exists scalars $\lambda_{1},\hdots,\lambda_{d}\in\mathbb{D}$ that satisfy

\begin{align}
    &0\leq \lvert \lambda_{j}\rvert<1 \nonumber\\
    \iff &0\leq \lvert \lambda_{j}\rvert^{2}<1 \nonumber\\
    \iff &1 \leq 1+\lvert \lambda_{j}\rvert^{2}<2 \label{inequalitytwo}
\end{align} 

\noindent and

\begin{equation}\label{inequalityone}
    0\leq\lvert\lambda_{j}\rvert<1 \iff 0\leq 2\lvert \lambda_{j}\rvert<2
\end{equation}

\noindent for $j=1,\hdots,d$. Subtracting inequality (\ref{inequalityone}) from inequality (\ref{inequalitytwo}) gives

\begin{align}
    &-1\leq2\lvert\lambda_{j}\rvert-1-\lvert\lambda_{j}\rvert^{2}<0 \nonumber\\
    &\iff 0\leq 2\lvert\lambda_{j}\rvert<1+\lvert\lambda_{j}\rvert^{2} \nonumber\\
    &\iff 0\leq \dfrac{2\lvert\lambda_{j}\rvert}{1+\lvert\lambda_{j}\rvert^{2}}<1\label{inequalityforinv}
\end{align}

\noindent for $j=1,\hdots,d$. As $V$ is an isometry on $\mathcal{H}$, the operator

$$
    (V^{*}+\lambda_{j})(V+\overline{\lambda}_{j}) = 1_{\mathcal{H}} + 2\mathrm{Re}(\lambda_{j} V) + \lvert \lambda_{j} \rvert^{2} =(1_{\mathcal{H}}+\lvert\lambda_{j}\rvert^{2})\bigg(1_{\mathcal{H}}+\dfrac{2\mathrm{Re}(\lambda_{j} V)}{1+\lvert\lambda_{j}\rvert^{2}}\bigg).$$

\noindent Since

$$ || 2\mathrm{Re}(\lambda_{j} V) ||_{\mathcal{B}(\mathcal{H})} = || \lambda_{j} V + \overline{\lambda_{j}} V^{*} ||_{\mathcal{B}(\mathcal{H})} \leq 2 \lvert\lambda_{j} \rvert, $$

\noindent we have that by inequality (\ref{inequalityforinv}), 

\begin{equation}\label{inequalityforinv2}
\Bigg\|\dfrac{2\mathrm{Re}(\lambda_{j} V)}{1+\lvert\lambda_{j}\rvert^{2}}\Bigg\|_{\mathcal{B}(\mathcal{H})}\leq \dfrac{2\lvert\lambda_{j}\rvert}{1+\lvert\lambda_{j}\rvert^{2}} < 1,
\end{equation}

\noindent hence the operator $(V^{*}+\lambda_{j})(V+\overline{\lambda}_{j})$ is invertible, as each of the operators $1_{\mathcal{H}}+\lvert\lambda_{j}\rvert^{2}$ and

$$ 1_{\mathcal{H}}+\dfrac{2\mathrm{Re}(\lambda_{j} V)}{1+\lvert\lambda_{j}\rvert^{2}}$$ 

\noindent are invertible for all $\lambda_{j}\in\mathbb{D}$ and $j=1,\hdots,d$.

 A bounded operator is surjective if and only if it has a bounded right inverse. For $\lambda_{j}\in\mathbb{D}$ for $j=1,\hdots,d$, we claim that the operator $V^{*}+\lambda_{j}$ has a bounded right inverse, denoted by $\dagger$, given by

$$(V^{*}+\lambda_{j})^{\dagger}=(V+\overline{\lambda}_{j})\bigg[(V^{*}+\lambda_{j})(V+\overline{\lambda}_{j})\bigg]^{-1}.$$

\noindent Indeed, we have proven invertibility of the square bracket for the operator given above and boundedness can be proven by a von Neumann series argument and an application of the triangle inequality. By direct calculation,

$$(V^{*}+\lambda_{j})(V^{*}+\lambda_{j})^{\dagger}
    =(V^{*}+\lambda_{j})(V+\overline{\lambda}_{j})\bigg[(V^{*}+\lambda_{j})(V+\overline{\lambda}_{j})\bigg]^{-1} 
    = 1_{\mathcal{H}}.$$

\noindent Therefore the operator $V^{*}+\lambda_{j}$ is surjective. Furthermore, the operators

$$p(V^{*}) = \prod_{j=1}^{d}(V^{*}+\lambda_{j})$$

\noindent are surjective. This comes as consequence of the composition of surjective operators being surjective.\end{proof}

Given a polynomial $p$ with all its zeroes in the unit disc, we can construct a finite Blaschke product $B$ of degree $d$, equation (\ref{fbpform}), by utilising the exchange map of equation (\ref{exchange}) such that $B(z)=p(z)\overline{q}^{-1}(z)$. Moreover, for a Hilbert space $\mathcal{H}$ and an isometry $V$ on $\mathcal{H}$, we define an operator by the Riesz-Dunford functional calculus, similar to a finite Blaschke product $B$, by

\begin{equation}\label{blaschkeoperator}
B(V^{*}) = p(V^{*})\overline{q}^{-1}(V^{*})
\end{equation}

\noindent where $q(z) = 1+c_{1}z+\hdots+c_{d}z^{d}$ and $p(z)=J\overline{q}(z)$ such that $J$ is the exchange map and

\begin{equation}\label{blaschkeoperator1}
B(V^{*})=\prod_{j=1}^{d}\big(V^{*}+\lambda_{j}\big)\big(1_{\mathcal{H}}+\overline{\lambda}_{j}V^{*}\big)^{-1}.
\end{equation}

\noindent Note $B(V^{*})$ is surjective and has a bounded right inverse $B^{\dagger}(V^{*})$ as follows. Since $q(V^{*})$ is invertible and, by Proposition \ref{Psurjective}, we know that $p(V^{*})$ is surjective, 

\begin{equation}\label{blaschkesurjective}
B^{\dagger}(V^{*}) = q(V^{*})p^{\dagger}(V^{*})
\end{equation}

\noindent such that

$$ B(V^{*})B^{\dagger}(V^{*}) = p(V^{*})\overline{q}^{-1}(V^{*})\overline{q}(V^{*})p^{\dagger}(V^{*}) = 1_{\mathcal{H}}.$$

\begin{lemma}\label{blaschkeoperatorcoisom}
Let $\mathcal{H}$ be a Hilbert space and let $V$ be an isometry on $\mathcal{H}$. For $\lambda_{1}\hdots,\lambda_{d}\in\mathbb{D}$, the operator $B(V^{*})$ on $\mathcal{H}$ defined by

$$B(V^{*}) =\prod_{j=1}^{d}\big(V^{*}+\lambda_{j}\big)\big(1_{\mathcal{H}}+\overline{\lambda_{j}}V^{*}\big)^{-1}$$

\noindent is a co-isometry.

\end{lemma}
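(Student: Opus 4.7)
The plan is to verify the co-isometry condition $B(V^{*})B(V^{*})^{*}=1_{\mathcal{H}}$ by working one Möbius factor at a time and then composing. Write
\[
B(V^{*})=\prod_{j=1}^{d}\phi_{j}(V^{*}),\qquad \phi_{j}(V^{*}):=(V^{*}+\lambda_{j})\bigl(1_{\mathcal{H}}+\overline{\lambda_{j}}V^{*}\bigr)^{-1}.
\]
Each $\phi_{j}(V^{*})$ is a rational function of $V^{*}$, so the factors mutually commute, and within each factor we may freely reorder $(V^{*}+\lambda_{j})$ and $(1_{\mathcal{H}}+\overline{\lambda_{j}}V^{*})^{-1}$ since both are polynomials in $V^{*}$. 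This will let me pass adjoints through without worrying about ordering.

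The first main step is to prove that each factor $\phi_{j}(V^{*})$ is individually a co-isometry. Taking the adjoint gives
\[
\phi_{j}(V^{*})^{*}=\bigl(1_{\mathcal{H}}+\lambda_{j}V\bigr)^{-1}(V+\overline{\lambda_{j}}),
\]
so, after commuting $(V^{*}+\lambda_{j})$ past $(1_{\mathcal{H}}+\overline{\lambda_{j}}V^{*})^{-1}$,
\[
\phi_{j}(V^{*})\phi_{j}(V^{*})^{*}=\bigl(1_{\mathcal{H}}+\overline{\lambda_{j}}V^{*}\bigr)^{-1}(V^{*}+\lambda_{j})(V+\overline{\lambda_{j}})\bigl(1_{\mathcal{H}}+\lambda_{j}V\bigr)^{-1}.
\]
The crucial calculation uses the isometry identity $V^{*}V=1_{\mathcal{H}}$ twice: expanding,
\[
(V^{*}+\lambda_{j})(V+\overline{\lambda_{j}})=V^{*}V+\lambda_{j}V+\overline{\lambda_{j}}V^{*}+|\lambda_{j}|^{2}=1_{\mathcal{H}}+\lambda_{j}V+\overline{\lambda_{j}}V^{*}+|\lambda_{j}|^{2},
\]
and similarly
\[
\bigl(1_{\mathcal{H}}+\overline{\lambda_{j}}V^{*}\bigr)\bigl(1_{\mathcal{H}}+\lambda_{j}V\bigr)=1_{\mathcal{H}}+\lambda_{j}V+\overline{\lambda_{j}}V^{*}+|\lambda_{j}|^{2}V^{*}V=1_{\mathcal{H}}+\lambda_{j}V+\overline{\lambda_{j}}V^{*}+|\lambda_{j}|^{2}.
\]
The middle factor therefore equals $(1_{\mathcal{H}}+\overline{\lambda_{j}}V^{*})(1_{\mathcal{H}}+\lambda_{j}V)$, and the outer inverses cancel to yield $\phi_{j}(V^{*})\phi_{j}(V^{*})^{*}=1_{\mathcal{H}}$.

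The second step is a general observation: if $A$ and $C$ are co-isometries on $\mathcal{H}$, then $(AC)(AC)^{*}=ACC^{*}A^{*}=AA^{*}=1_{\mathcal{H}}$, so products of co-isometries are co-isometries. Applied inductively to $\phi_{1}(V^{*}),\ldots,\phi_{d}(V^{*})$, this gives $B(V^{*})B(V^{*})^{*}=1_{\mathcal{H}}$, as required. The only delicate point is that $VV^{*}\neq 1_{\mathcal{H}}$ in general, so one must be careful never to invoke it — the computation only ever uses $V^{*}V=1_{\mathcal{H}}$, which is precisely the hypothesis that $V$ is an isometry. I expect no real obstacle beyond keeping the non-commutativity of $V$ and $V^{*}$ straight when expanding the two products above.
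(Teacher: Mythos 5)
Your proof is correct and follows essentially the same route as the paper's: commute the two brackets within each M\"obius factor, expand $(V^{*}+\lambda_{j})(V+\overline{\lambda_{j}})$ and $(1_{\mathcal{H}}+\overline{\lambda_{j}}V^{*})(1_{\mathcal{H}}+\lambda_{j}V)$ using only $V^{*}V=1_{\mathcal{H}}$, and then compose co-isometries. The only cosmetic slip is calling $(1_{\mathcal{H}}+\overline{\lambda_{j}}V^{*})^{-1}$ a polynomial in $V^{*}$ — it is a norm-convergent Neumann series in $V^{*}$ (which the paper writes out explicitly), but that still commutes with $V^{*}+\lambda_{j}$, so the argument stands.
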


\begin{proof}
For $j=1,\hdots,d$, the brackets of the operator $(V^{*}+\lambda_{j}I)(I+\overline{\lambda}_{j}V^{*})^{-1}$ commute, 

\begin{align*}(V^{*}+\lambda_{j}&)(1_{\mathcal{H}}+\overline{\lambda_{j}}V^{*})^{-1} \\
&= (V^{*}+\lambda_{j})\sum_{k=0}^{\infty}(-1)^{k}(\overline{\lambda_{j}}V^{*})^{k} \\
&=\sum_{k=0}^{\infty}(-1)^{k}(\overline{\lambda_{j}}V^{*})^{k}V^{*}+\lambda_{j}\sum_{k=0}^{\infty}(-1)^{k}(\overline{\lambda_{j}}V^{*})^{k}\\
&=(1_{\mathcal{H}}+\overline{\lambda_{j}}V^{*})^{-1}(V^{*}+\lambda_{j}).
\end{align*}

\noindent Moreover, the operator $(V^{*}+\lambda_{j})(1_{\mathcal{H}}+\overline{\lambda}_{j}V^{*})^{-1}$ is a co-isometry for each $\lambda_{j}\in\mathbb{D}$ and $j=1,\hdots,d$. Indeed, by the commutativity of each bracket,

\begin{align*}
    &1_{\mathcal{H}}-(V^{*}+\lambda_{j})(1_{\mathcal{H}}+\overline{\lambda}_{j}V^{*})^{-1}\bigg((V^{*}+\lambda_{j})(1_{\mathcal{H}}+\overline{\lambda}_{j}V^{*})^{-1}\bigg)^{*}\\
    &=1_{\mathcal{H}} - (V^{*}+\lambda_{j})(1_{\mathcal{H}}+\overline{\lambda}_{j}V^{*})^{-1}(1_{\mathcal{H}}+\lambda_{j}V)^{-1}(V+\overline{\lambda_{j}}) \\
    &= 1_{\mathcal{H}} - (1_{\mathcal{H}}+\overline{\lambda}_{j}V^{*})^{-1}(V^{*}+\lambda_{j})(V+\overline{\lambda_{j}})(1_{\mathcal{H}}+\lambda_{j}V)^{-1}\\
    &= (1+\overline{\lambda}_{j}V^{*})^{-1}\bigg((1_{\mathcal{H}}+\overline{\lambda}_{j}V^{*})(1_{\mathcal{H}}+\lambda_{j}V) \\
    &\hspace{3cm}- (V^{*}+\lambda_{j})(V+\overline{\lambda_{j}})\bigg)(1_{\mathcal{H}}+\lambda_{j}V)^{-1}.
\end{align*}

\noindent Expand brackets in the above equation and use the fact that $V$ is an isometry,

\begin{align*}
    &1_{\mathcal{H}}-(V^{*}+\lambda_{j})(1_{\mathcal{H}}+\overline{\lambda}_{j}V^{*})^{-1}\bigg((V^{*}+\lambda_{j})(1_{\mathcal{H}}+\overline{\lambda}_{j}V^{*})^{-1}\bigg)^{*} \\
    &=(1_{\mathcal{H}}+\overline{\lambda}_{j}V^{*})^{-1}\bigg(1_{\mathcal{H}}+2\mathrm{Re}(\lambda_{j}V)+\lvert\lambda_{j}\rvert^{2}\\
    &\hspace{3cm}-\Big(V^{*}V+2\mathrm{Re}(\lambda_{j}V)+\lvert\lambda_{j}\rvert^{2}\Big)\bigg)(1_{\mathcal{H}}+\lambda_{j}V)^{-1}\\
    &=(1_{\mathcal{H}}+\overline{\lambda}_{j}V^{*})^{-1}\bigg(1_{\mathcal{H}}-V^{*}V\bigg)(1_{\mathcal{H}}+\lambda_{j}V)^{-1}.\\
\end{align*}

\noindent Since $V$ is an isometry on $\mathcal{H}$, the brackets in the centre of the equation above become the zero operator on $\mathcal{H}$ and

$$1_{\mathcal{H}}-(V^{*}+\lambda_{j})(1_{\mathcal{H}}+\overline{\lambda}_{j}V^{*})^{-1}\bigg((V^{*}+\lambda_{j})(1_{\mathcal{H}}+\overline{\lambda}_{j}V^{*})^{-1}\bigg)^{*}=0_{\mathcal{H}}.$$

\noindent Hence, for each $\lambda_{j}\in\mathbb{D}$ and $j=1,\hdots,d$,

$$1_{\mathcal{H}}=(V^{*}+\lambda_{j})(1_{\mathcal{H}}+\overline{\lambda}_{j}V^{*})^{-1}\bigg((V^{*}+\lambda_{j})(1_{\mathcal{H}}+\overline{\lambda}_{j}V^{*})^{-1}\bigg)^{*}.$$

Form the operator 

$$B(V^{*}) = \prod_{j=1}^{d}\big(V^{*}+\lambda_{j}\big)\big(1_{\mathcal{H}}+\overline{\lambda}_{j}V^{*}\big)^{-1}.$$

\noindent Since the operators in the product are co-isometries on $\mathcal{H}$ and the composition of co-isometries is also a co-isometry, then $B(V^{*})$ is a co-isometry. Moreover, 

$$\|B(V^{*})\|^{2}_{\mathcal{B}(\mathcal{H})} = \|B(V^{*})B^{*}(V^{*})\|_{\mathcal{B}(\mathcal{H})} = \|1_{\mathcal{H}}\|_{\mathcal{B}(\mathcal{H})} = 1.$$\end{proof}

\section{Blaschke frames and the frame operator}\label{blaschkeframeframeoperator}

Let $\mathcal{H}$ be a Hilbert space and let $(b_{m})_{m=1}^{\infty}$ be a Blaschke frame for $B$ with isometry $V$ on $\mathcal{H}$, given by equation (\ref{blaschkeframe}). We need the following essential result to show that $(b_{m})_{m=1}^{\infty}$ is indeed a frame.

\begin{proposition}{\normalfont{\cite[Corollary 5.3.2]{Christensen}}}\label{closedboundedprop}
     Assume that $(x_{m})_{m=1}^{\infty}$ is a frame for a Hilbert space $\mathcal{H}$ with bounds $A,B$ and that $T$ is a bounded surjective operator on $\mathcal{H}$. Then $(Tx_{m})_{m=1}^{\infty}$ is a frame for $\mathcal{H}$ with frame bounds $A\|T^{\dagger}\|_{\mathcal{B(H)}}^{-2}$, $B\|T\|_{\mathcal{B(H)}}^{2}$ where $T^{\dagger}$ is the right inverse of $T$.
\end{proposition}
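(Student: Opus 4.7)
The plan is to reduce both frame inequalities for $(Tx_m)_{m=1}^{\infty}$ to the corresponding inequalities for $(x_m)_{m=1}^{\infty}$ by dualising through $T$. The starting observation is the identity
$$\langle f, Tx_m\rangle_{\mathcal{H}} = \langle T^*f, x_m\rangle_{\mathcal{H}}, \qquad m = 1, 2, \ldots,$$
which reduces everything to comparing $\|T^*f\|_{\mathcal{H}}$ with $\|f\|_{\mathcal{H}}$. Boundedness of $T$ takes care of the upper direction immediately; surjectivity of $T$, via the open mapping theorem, will produce a bounded right inverse $T^{\dagger}\in\mathcal{B}(\mathcal{H})$ with $TT^{\dagger} = 1_{\mathcal{H}}$, and this is where the lower direction comes from.

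For the upper frame bound I would simply compute
$$\sum_{m=1}^{\infty}\bigl|\langle f, Tx_m\rangle_{\mathcal{H}}\bigr|^2 = \sum_{m=1}^{\infty}\bigl|\langle T^*f, x_m\rangle_{\mathcal{H}}\bigr|^2 \leq B\|T^*f\|_{\mathcal{H}}^2 \leq B\|T\|_{\mathcal{B}(\mathcal{H})}^2\|f\|_{\mathcal{H}}^2,$$
using the upper frame inequality for $(x_m)$ on the vector $T^*f$ together with $\|T^*\|_{\mathcal{B}(\mathcal{H})} = \|T\|_{\mathcal{B}(\mathcal{H})}$. This yields the upper constant $B\|T\|_{\mathcal{B}(\mathcal{H})}^2$.

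For the lower frame bound, the key step is that $T^*$ is bounded below by $\|T^{\dagger}\|_{\mathcal{B}(\mathcal{H})}^{-1}$. Taking adjoints in $TT^{\dagger} = 1_{\mathcal{H}}$ gives $(T^{\dagger})^*T^* = 1_{\mathcal{H}}$, so for every $f \in \mathcal{H}$,
$$\|f\|_{\mathcal{H}} = \|(T^{\dagger})^*T^*f\|_{\mathcal{H}} \leq \|T^{\dagger}\|_{\mathcal{B}(\mathcal{H})}\|T^*f\|_{\mathcal{H}},$$
and consequently $\|T^*f\|_{\mathcal{H}} \geq \|T^{\dagger}\|_{\mathcal{B}(\mathcal{H})}^{-1}\|f\|_{\mathcal{H}}$. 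Applying the lower frame inequality for $(x_m)$ to $T^*f$ then gives
$$\sum_{m=1}^{\infty}\bigl|\langle f, Tx_m\rangle_{\mathcal{H}}\bigr|^2 \geq A\|T^*f\|_{\mathcal{H}}^2 \geq A\|T^{\dagger}\|_{\mathcal{B}(\mathcal{H})}^{-2}\|f\|_{\mathcal{H}}^2,$$
which is the required lower frame bound.

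The one non-routine point, and hence the main obstacle, is ensuring the existence of the bounded right inverse $T^{\dagger}$ whose norm controls the lower constant. In the Hilbert space setting this is not a difficulty: surjectivity of $T$ makes $TT^*$ invertible on $\mathcal{H}$, so one may simply take $T^{\dagger} = T^*(TT^*)^{-1}$, which is bounded and satisfies $TT^{\dagger} = 1_{\mathcal{H}}$. Any bounded right inverse is in fact sufficient for the argument, and the sharpest lower constant corresponds to the choice minimising $\|T^{\dagger}\|_{\mathcal{B}(\mathcal{H})}$.
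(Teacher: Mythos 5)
Your argument is correct and is essentially the standard proof of this result; the paper itself gives no proof but simply cites Christensen, whose argument proceeds exactly as yours does (rewrite $\langle f,Tx_m\rangle$ as $\langle T^*f,x_m\rangle$, use $\|T^*f\|\le\|T\|\,\|f\|$ for the upper bound and $(T^{\dagger})^*T^*=1_{\mathcal{H}}$ for the lower bound). Your closing remark supplying the explicit bounded right inverse $T^{\dagger}=T^*(TT^*)^{-1}$ correctly fills the only nontrivial existence point, since surjectivity of $T$ forces $T^*$ to be bounded below and hence $TT^*$ to be invertible.
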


Thus, for the sequence $(b_{m})_{m=1}^{\infty}$ defined in equation (\ref{blaschkeframe}),

\begin{equation}\label{boundsforexistenceofframe}
\dfrac{\|p^{\dagger}(V^{*})\|_{\mathcal{B}(\mathcal{H})}^{-1}}{\max_{z\in\overline{\mathbb{D}}}|q(z)|}\|f\|_{\mathcal{H}} \leq \Bigg[\sum_{m=1}^{\infty}\lvert \langle f,b_{m}\rangle_{\mathcal{H}}\rvert^{2}\Bigg]^{1/2}\leq \dfrac{\|p(V^{*})\|_{\mathcal{B}(\mathcal{H})}}{{\min_{z\in\overline{\mathbb{D}}}|q(z)|}}\|f\|_{\mathcal{H}}
\end{equation}

\noindent for all $f\in \mathcal{H}$. The inequality above can be seen if we choose the contraction $T$ in Lemma \ref{dualbasisisbasis} to be an isometry $V$, where $(q(V)e_{m})_{m=1}^{\infty}$ is a frame (in fact it is a Riesz basis), then by finding the dual Riesz basis of $(q(V)e_{m})_{m=1}^{\infty}$ from the operator $\overline{q}^{-1}(V^{*})$ and applying the surjective operator $p(V^{*})$ from Proposition \ref{Psurjective} to the Riesz basis $(\overline{q}^{-1}(V^{*})e_{m})_{m=1}^{\infty}$ to build the sequence from equation (\ref{blaschkeframe}). Proposition \ref{closedboundedprop} gives us that the sequence $(b_{m})_{m=1}^{\infty}$ is a frame for $\mathcal{H}$. That is, we have proved the following result.

\begin{proposition}\label{framepropblaschke}
Let $(e_{m})_{m=1}^{\infty}$ be an orthonormal basis for a Hilbert space $\mathcal{H}$ and let $V$ be an isometry on $\mathcal{H}$. The sequence $(b_{m})_{m=1}^{\infty}=(B(V^{*})e_{m})_{m=1}^{\infty}$ is a frame for $\mathcal{H}$, where $B(V^{*})$ is a co-isometry on $\mathcal{H}$ defined by

$$B(V^{*}) = \prod_{j=1}^{d}\big(V^{*}+\lambda_{j}\big)\big(1_{\mathcal{H}}+\overline{\lambda}_{j}V^{*}\big)^{-1}.$$

\end{proposition}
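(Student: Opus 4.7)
The plan is to apply Proposition \ref{closedboundedprop} directly, taking $(e_m)_{m=1}^\infty$ itself as the input frame and $B(V^*)$ as the bounded surjective operator. An orthonormal basis is a Parseval frame with $A=B=1$, so once I know $B(V^*)$ is bounded and surjective, Proposition \ref{closedboundedprop} delivers the frame property for $(B(V^*)e_m)_{m=1}^\infty$ with essentially no further work.

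Both hypotheses on $B(V^*)$ are already in hand. Lemma \ref{blaschkeoperatorcoisom} establishes that $B(V^*)$ is a co-isometry, so $\|B(V^*)\|_{\mathcal{B}(\mathcal{H})}=1$ and $B^*(V^*)$ is a bounded right inverse of $B(V^*)$ with $\|B^*(V^*)\|_{\mathcal{B}(\mathcal{H})}=1$. Feeding these constants into the bounds $A\|T^\dagger\|^{-2}\leq A\|T^\dagger\|^{-2}\,\cdot\,\|f\|^2$, $B\|T\|^2$ of Proposition \ref{closedboundedprop} gives frame bounds equal to $1$, so $(b_m)_{m=1}^\infty$ is in fact a Parseval frame, which is stronger than the statement asks for and is consistent with the general bound recorded in inequality (\ref{boundsforexistenceofframe}).

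An alternative route, matching the paragraph immediately preceding the statement, is to build the frame in two stages via the factorisation $B(V^*)=p(V^*)\overline{q}^{-1}(V^*)$ from equation (\ref{blaschkeoperator}). First use Lemma \ref{dualbasisisbasis} (with the contraction there specialised to the isometry $V$) to conclude that $\bigl(\overline{q}^{-1}(V^*)e_m\bigr)_{m=1}^\infty$ is a Riesz basis, hence a frame. Then Proposition \ref{Psurjective} supplies that $p(V^*)$ is a bounded surjective operator, so a second application of Proposition \ref{closedboundedprop} shows that $\bigl(p(V^*)\overline{q}^{-1}(V^*)e_m\bigr)_{m=1}^\infty=(B(V^*)e_m)_{m=1}^\infty$ is a frame with bounds given by inequality (\ref{boundsforexistenceofframe}).

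There is no substantive obstacle here: the proposition is a synthesis of Lemma \ref{blaschkeoperatorcoisom} and Proposition \ref{closedboundedprop}, with Proposition \ref{Psurjective} lurking in the background to guarantee that the Riesz--Dunford definition of $B(V^*)$ makes sense as a surjection. The only mild care required is to note that the unimodular constant $\tau$ in the general form (\ref{fbpform}) of a finite Blaschke product plays no role in either the surjectivity argument or the co-isometry calculation, so working with the representation (\ref{blaschkeoperator1}) loses no generality.
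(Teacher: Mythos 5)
Your proposal is correct, and your primary route is genuinely different from the paper's. The paper proves this proposition via the factorisation $B(V^{*})=p(V^{*})\overline{q}^{-1}(V^{*})$: it specialises the contraction in Lemma \ref{dualbasisisbasis} to the isometry $V$ to get that $\bigl(\overline{q}^{-1}(V^{*})e_{m}\bigr)_{m=1}^{\infty}$ is a Riesz basis (hence a frame), then applies the surjective operator $p(V^{*})$ from Proposition \ref{Psurjective} and invokes Proposition \ref{closedboundedprop}, arriving at the bounds in inequality (\ref{boundsforexistenceofframe}) --- this is precisely your ``alternative route.'' Your main argument instead feeds the orthonormal basis, viewed as a Parseval frame, directly into Proposition \ref{closedboundedprop} with $T=B(V^{*})$, using Lemma \ref{blaschkeoperatorcoisom} to supply both surjectivity (the adjoint is a right inverse since $B(V^{*})B^{*}(V^{*})=1_{\mathcal{H}}$) and the norm constants $\|T\|=\|T^{\dagger}\|=1$. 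This is shorter, bypasses the polynomial two-stage construction entirely, and yields the optimal bounds $A=B=1$ at once, so it effectively pre-empts the Parseval claim that the paper only establishes later via the frame-operator computation in Theorem \ref{blaschkeframetheorem}; the cost is that it leans entirely on the co-isometry lemma, whereas the paper's route also exhibits the intermediate Riesz basis $\bigl(\overline{q}^{-1}(V^{*})e_{m}\bigr)_{m=1}^{\infty}$ and the explicit (non-optimal) bounds of (\ref{boundsforexistenceofframe}), which the paper reuses in the proof of Theorem \ref{blaschkeframetheorem} to verify the Bessel condition. Both arguments are sound and rest on results proved before the proposition, so there is no circularity in either.
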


Inequality (\ref{boundsforexistenceofframe}) is somewhat unsatisfactory in that we have not yet found suitable upper bounds for both $\|p(V^{*})\|_{\mathcal{B}(\mathcal{H})}$ and $\|p^{\dagger}(V^{*})\|_{\mathcal{B}(\mathcal{H})}$. Moreover, if we find suitable upper bounds for each, these may not be optimal. We will show that the optimal bounds for a Blaschke frame can be found and that a Blaschke frame is a Parseval frame. Recall, a frame $(x_{m})_{m=1}^{\infty}$ is a Parseval frame if the frame bounds $A,B$ as in equation (\ref{framedefeq}) satisfy $A=B=1$.

Every frame has what is known as the {\it{frame operator}}. This is a bounded, invertible, self-adjoint and positive operator on $\mathcal{H}$ \cite[Lemma 5.1.5]{Christensen}. It is the operator $\mathfrak{F}:\mathcal{H}\rightarrow \mathcal{H}$ such that

\begin{equation}\label{frameoperatordef}
\mathfrak{F}f=\sum_{m=1}^{\infty}\langle f,x_{m}\rangle_{\mathcal{H}}x_{m}
\end{equation}

\noindent for all $f\in\mathcal{H}$. Every frame operator for a frame in a Hilbert space is constructed from the composition of two relatively simpler operators. Recall, a sequence $(x_{m})_{m=1}^{\infty}$ in a Hilbert space $\mathcal{H}$ is called a Bessel sequence if there exists a constant $B>0$ such that 

\begin{equation}\label{besselsequencedef}
\sum_{m=1}^{\infty}\lvert \langle f,x_{m}\rangle_{\mathcal{H}}\rvert^{2}\leq B\|f\|_{\mathcal{H}}
\end{equation}
    
\noindent for all $f\in\mathcal{H}$. Since a frame $(x_{m})_{m=1}^{\infty}$ for $\mathcal{H}$ is a Bessel sequence, the operator $\mathfrak{S}:l^{2}(\mathbb{N})\rightarrow\mathcal{H}$ defined by

$$ \mathfrak{S}(a_{m})_{m=1}^{\infty}=\sum_{m=1}^{\infty}a_{m}x_{m}$$

\noindent for all sequences $(a_{m})$ in $l^{2}(\mathbb{N})$ is bounded \cite[Theorem 3.2.3]{Christensen}. We call $\mathfrak{S}$ is called the {\it{synthesis}} operator or the {\it{pre-frame}} operator. The adjoint operator $\mathfrak{S}^{*}:\mathcal{H}\rightarrow l^{2}(\mathbb{N})$, is given by

$$\mathfrak{S}^{*}f=\{\langle f,x_{m}\rangle_{\mathcal{H}}\}_{m=1}^{\infty}.$$

\noindent By composing $\mathfrak{S}$ and $\mathfrak{S}^{*}$ we obtain the frame operator $\mathfrak{F}=\mathfrak{S}\mathfrak{S}^{*}$ as in equation (\ref{frameoperatordef}). Therefore it will be in our interests to find the synthesis operator for a Blaschke frame $(b_{m})_{m=1}^{\infty}$, as in equation (\ref{blaschkeframe}). We include the following important result, which gives us a way to find the optimal frame bounds for a frame.

\begin{proposition}{\normalfont{\cite[Proposition 5.4.4]{Christensen}}}\label{optimalframebounds}
    Let $(x_{m})_{m=1}^{\infty}$ be a frame for a Hilbert space $\mathcal{H}$ and let $\mathfrak{S}$ and $\mathfrak{F}$ be the synthesis and frame operators for the frame respectively. The optimal frame bounds $A,B$ for a frame $(x_{m})_{m=1}^{\infty}$ are given by 

\begin{align*}
    A&=\|\mathfrak{F}^{-1}\|^{-1}_{\mathcal{B}(\mathcal{H})} = \|\mathfrak{S}^{\dagger}\|^{-2}_{\mathcal{B}(\mathcal{H})},\\
    B&=\|\mathfrak{F}\|_{\mathcal{B}(\mathcal{H})} = \|\mathfrak{S}\|^{2}_{\mathcal{B}(\mathcal{H})},
\end{align*}

\noindent where $\mathfrak{S}^{\dagger}$ is the bounded right inverse of the synthesis operator.
    
\end{proposition}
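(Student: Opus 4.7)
The plan is to reformulate the frame inequalities as operator inequalities and then exploit the factorisation $\mathfrak{F} = \mathfrak{S}\mathfrak{S}^{*}$. First I would observe that, for any $f\in\mathcal{H}$,
$$\sum_{m=1}^{\infty}\lvert \langle f,x_{m}\rangle_{\mathcal{H}}\rvert^{2} = \|\mathfrak{S}^{*}f\|^{2}_{\ell^{2}(\mathbb{N})} = \langle \mathfrak{S}\mathfrak{S}^{*}f,f\rangle_{\mathcal{H}} = \langle \mathfrak{F}f,f\rangle_{\mathcal{H}},$$
so the frame inequality (\ref{framedefeq}) is equivalent to $A\cdot 1_{\mathcal{H}}\leq \mathfrak{F}\leq B\cdot 1_{\mathcal{H}}$ as positive operators on $\mathcal{H}$. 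Consequently, the optimal constants $A$ and $B$ are $\inf_{\|f\|=1}\langle \mathfrak{F}f,f\rangle_{\mathcal{H}}$ and $\sup_{\|f\|=1}\langle \mathfrak{F}f,f\rangle_{\mathcal{H}}$ respectively.

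Next I would use that $\mathfrak{F}$ is a bounded, positive, self-adjoint, and invertible operator on $\mathcal{H}$. For such operators the spectrum satisfies $\sigma(\mathfrak{F}) \subseteq [\|\mathfrak{F}^{-1}\|_{\mathcal{B}(\mathcal{H})}^{-1},\|\mathfrak{F}\|_{\mathcal{B}(\mathcal{H})}]$, with the two endpoints attained, and the numerical range of a self-adjoint operator equals the closed convex hull of its spectrum. This gives at once the optimal bounds $B = \|\mathfrak{F}\|_{\mathcal{B}(\mathcal{H})}$ and $A = \|\mathfrak{F}^{-1}\|_{\mathcal{B}(\mathcal{H})}^{-1}$.

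To convert these into expressions involving $\mathfrak{S}$, I would apply the $C^{*}$-identity. Since $\mathfrak{F} = \mathfrak{S}\mathfrak{S}^{*}$, we have $\|\mathfrak{F}\|_{\mathcal{B}(\mathcal{H})} = \|\mathfrak{S}\mathfrak{S}^{*}\|_{\mathcal{B}(\mathcal{H})} = \|\mathfrak{S}\|^{2}_{\mathcal{B}(\mathcal{H})}$, yielding the formula for $B$. For the formula for $A$, I would take the Moore--Penrose right inverse $\mathfrak{S}^{\dagger} := \mathfrak{S}^{*}\mathfrak{F}^{-1}$, which is bounded (since $\mathfrak{F}$ is invertible) and satisfies $\mathfrak{S}\mathfrak{S}^{\dagger} = \mathfrak{S}\mathfrak{S}^{*}\mathfrak{F}^{-1} = 1_{\mathcal{H}}$. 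A direct calculation using self-adjointness of $\mathfrak{F}^{-1}$ gives
$$(\mathfrak{S}^{\dagger})^{*}\mathfrak{S}^{\dagger} = \mathfrak{F}^{-1}\mathfrak{S}\mathfrak{S}^{*}\mathfrak{F}^{-1} = \mathfrak{F}^{-1}\mathfrak{F}\mathfrak{F}^{-1} = \mathfrak{F}^{-1},$$
and the $C^{*}$-identity then yields $\|\mathfrak{S}^{\dagger}\|^{2}_{\mathcal{B}(\mathcal{H})} = \|\mathfrak{F}^{-1}\|_{\mathcal{B}(\mathcal{H})}$, so that $\|\mathfrak{S}^{\dagger}\|^{-2}_{\mathcal{B}(\mathcal{H})} = \|\mathfrak{F}^{-1}\|^{-1}_{\mathcal{B}(\mathcal{H})} = A$.

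The main subtlety is in interpreting $\mathfrak{S}^{\dagger}$. Unless the frame is a Riesz basis, $\mathfrak{S}$ has a non-trivial kernel and therefore admits many bounded right inverses of differing norms; the identity $A = \|\mathfrak{S}^{\dagger}\|^{-2}_{\mathcal{B}(\mathcal{H})}$ holds specifically for the Moore--Penrose right inverse $\mathfrak{S}^{*}\mathfrak{F}^{-1}$, whose range is $(\ker \mathfrak{S})^{\perp}$ and which has the smallest operator norm among all bounded right inverses of $\mathfrak{S}$. Once this choice is fixed (which is the standard convention for the superscript $\dagger$), the computation above is routine.
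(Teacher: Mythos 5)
Your proof is correct: the reduction of the frame inequality to $A\cdot 1_{\mathcal{H}}\leq \mathfrak{F}\leq B\cdot 1_{\mathcal{H}}$, the identification of the optimal bounds with the spectral extremes of the positive invertible operator $\mathfrak{F}$, and the $C^{*}$-identity computations with $\mathfrak{S}^{\dagger}=\mathfrak{S}^{*}\mathfrak{F}^{-1}$ are all sound, and your closing remark correctly pins down the only real subtlety (that $\dagger$ must mean the Moore--Penrose right inverse, the minimal-norm one). The paper offers no proof of this statement --- it is quoted verbatim from Christensen --- and your argument is essentially the standard one found in that reference, so there is nothing to compare beyond noting that you have supplied a complete proof where the paper supplies only a citation.
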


The synthesis operator for a Blaschke frame is of immense importance. The next result highlights just how important this is, as it will allow us to answer the question of redundancy for a Blaschke frame.

\begin{theorem}{\normalfont{\cite[Theorem 5.5.1]{Christensen}}}\label{existencepreframe}
    A sequence $(x_{m})_{m=1}^{\infty}$ is a frame for a Hilbert space $\mathcal{H}$ if and only if 

$$\mathfrak{S}:(a_{m})_{m=1}^{\infty}\rightarrow \sum_{m=1}^{\infty}a_{m}x_{m}$$

\noindent is a well-defined mapping of $l^{2}(\mathbb{N})$ onto $\mathcal{H}$.
    
\end{theorem}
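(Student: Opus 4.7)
The plan is to prove the two directions of the equivalence separately, leveraging the basic functional-analytic machinery (closed graph theorem, open mapping theorem, adjoint calculus) and the identifications $\mathfrak{S}^{*}f = (\langle f,x_{m}\rangle_{\mathcal{H}})_{m=1}^{\infty}$ and $\mathfrak{F} = \mathfrak{S}\mathfrak{S}^{*}$ that are already recorded in the text. Throughout, I will work with the canonical orthonormal basis $(\delta_{m})_{m=1}^{\infty}$ of $l^{2}(\mathbb{N})$, so that $\mathfrak{S}\delta_{m}=x_{m}$.

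For the forward implication, assume $(x_{m})_{m=1}^{\infty}$ is a frame for $\mathcal{H}$ with bounds $A,B>0$. Since every frame is a Bessel sequence (the upper frame inequality coincides with (\ref{besselsequencedef})), the standard argument using Cauchy sequences in $\mathcal{H}$ shows that $\sum_{m=1}^{\infty}a_{m}x_{m}$ converges for every $(a_{m})\in l^{2}(\mathbb{N})$; this gives the well-definedness of $\mathfrak{S}$. For surjectivity, invoke the frame operator $\mathfrak{F}=\mathfrak{S}\mathfrak{S}^{*}$. By the lower frame inequality $A\|f\|_{\mathcal{H}}^{2}\leq\langle\mathfrak{F}f,f\rangle_{\mathcal{H}}$ and the upper one $\langle\mathfrak{F}f,f\rangle_{\mathcal{H}}\leq B\|f\|_{\mathcal{H}}^{2}$, so $\mathfrak{F}$ is bounded, self-adjoint and satisfies $\mathfrak{F}\geq A\cdot 1_{\mathcal{H}}$; thus $\mathfrak{F}$ is invertible on $\mathcal{H}$. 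In particular $\mathfrak{F}:\mathcal{H}\to\mathcal{H}$ is surjective, and since it factors through $\mathfrak{S}$ we conclude that $\mathfrak{S}(l^{2}(\mathbb{N}))=\mathcal{H}$.

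For the reverse implication, assume that $\mathfrak{S}$ is well-defined on $l^{2}(\mathbb{N})$ and maps onto $\mathcal{H}$. The first task is to recover the upper frame bound: well-definedness alone makes $\mathfrak{S}$ an everywhere-defined linear map, and an application of the closed graph theorem (if $(a^{(n)})\to a$ in $l^{2}(\mathbb{N})$ and $\mathfrak{S}a^{(n)}\to g$ in $\mathcal{H}$, then componentwise convergence and pairing against each $x_{m}$ force $g=\mathfrak{S}a$) shows $\mathfrak{S}\in\mathcal{B}(l^{2}(\mathbb{N}),\mathcal{H})$. Taking the adjoint and evaluating on $\delta_{m}$ identifies $\mathfrak{S}^{*}f=(\langle f,x_{m}\rangle_{\mathcal{H}})_{m=1}^{\infty}$, whence
$$\sum_{m=1}^{\infty}\lvert\langle f,x_{m}\rangle_{\mathcal{H}}\rvert^{2}=\|\mathfrak{S}^{*}f\|_{l^{2}}^{2}\leq\|\mathfrak{S}\|_{\mathcal{B}}^{2}\|f\|_{\mathcal{H}}^{2},$$
which is the Bessel inequality with $B=\|\mathfrak{S}\|_{\mathcal{B}}^{2}$.

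To obtain the lower frame bound, invoke surjectivity together with the open mapping theorem. Since $\mathfrak{S}:l^{2}(\mathbb{N})\to\mathcal{H}$ is a bounded surjection of Banach spaces, it admits a bounded right inverse $\mathfrak{S}^{\dagger}:\mathcal{H}\to l^{2}(\mathbb{N})$ with $\mathfrak{S}\mathfrak{S}^{\dagger}=1_{\mathcal{H}}$ (concretely, $\mathfrak{S}^{\dagger}$ can be realised as $\mathfrak{S}^{*}(\mathfrak{S}\mathfrak{S}^{*})^{-1}$ once $\mathfrak{F}=\mathfrak{S}\mathfrak{S}^{*}$ is shown to be invertible). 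Taking adjoints gives $(\mathfrak{S}^{\dagger})^{*}\mathfrak{S}^{*}=1_{\mathcal{H}}$, so for every $f\in\mathcal{H}$,
$$\|f\|_{\mathcal{H}}=\|(\mathfrak{S}^{\dagger})^{*}\mathfrak{S}^{*}f\|_{\mathcal{H}}\leq\|\mathfrak{S}^{\dagger}\|_{\mathcal{B}}\|\mathfrak{S}^{*}f\|_{l^{2}},$$
which rearranges to the lower frame inequality with $A=\|\mathfrak{S}^{\dagger}\|_{\mathcal{B}}^{-2}$, matching the formula recorded in Proposition \ref{optimalframebounds}. The main obstacle — really the only non-cosmetic one — is the clean handoff from the algebraic hypothesis "$\mathfrak{S}$ is well-defined and onto" to the quantitative bounds $\|\mathfrak{S}\|_{\mathcal{B}}<\infty$ and $\|\mathfrak{S}^{\dagger}\|_{\mathcal{B}}<\infty$; this is exactly where the closed graph and open mapping theorems do the work, and once these are invoked the frame inequalities drop out immediately.
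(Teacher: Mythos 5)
The paper does not prove this statement at all: it is quoted verbatim from Christensen's book (Theorem 5.5.1 there) and used as a black box, so there is no in-paper proof to compare against. Judged on its own, your argument is essentially the standard one and is correct in outline: frame $\Rightarrow$ Bessel $\Rightarrow$ $\mathfrak{S}$ well defined and bounded, with surjectivity read off from the invertibility of $\mathfrak{F}=\mathfrak{S}\mathfrak{S}^{*}$; conversely, boundedness of $\mathfrak{S}$ yields the upper bound via $\mathfrak{S}^{*}f=(\langle f,x_{m}\rangle)_{m}$, and a bounded right inverse yields the lower bound via $(\mathfrak{S}^{\dagger})^{*}\mathfrak{S}^{*}=1_{\mathcal{H}}$.

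The one step you should not wave at is the closed-graph verification in the reverse direction. To show that $a^{(n)}\to a$ and $\mathfrak{S}a^{(n)}\to g$ force $g=\mathfrak{S}a$, you need $\langle\mathfrak{S}a^{(n)},h\rangle\to\langle\mathfrak{S}a,h\rangle$, i.e. continuity of the functional $a\mapsto\sum_{m}a_{m}\langle x_{m},h\rangle$; that in turn requires $(\langle x_{m},h\rangle)_{m}\in l^{2}$, which is exactly the Bessel-type conclusion you are trying to reach. This is not fatal --- the scalar fact that a series $\sum a_{m}c_{m}$ converging for all $(a_{m})\in l^{2}$ forces $(c_{m})\in l^{2}$ is itself a uniform-boundedness consequence --- but as written the parenthetical ``pairing against each $x_{m}$'' hides a genuine appeal to Banach--Steinhaus. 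The cleaner and standard route (and the one Christensen uses) is to apply the uniform boundedness principle directly to the partial-sum operators $T_{N}a=\sum_{m=1}^{N}a_{m}x_{m}$, which converge pointwise to $\mathfrak{S}$ by hypothesis and hence give $\|\mathfrak{S}\|<\infty$ in one stroke. With that substitution (and a sentence justifying why surjectivity of a bounded $\mathfrak{S}$ makes $\mathfrak{S}\mathfrak{S}^{*}$ invertible, e.g. via the closed range theorem), your proof is complete.
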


 Define the Hilbert space isomorphism $L: l^{2}(\mathbb{N})\rightarrow \mathcal{H}$ by 

\begin{equation}\label{isomorphism}
L(a_{1},a_{2},\hdots) = a_{1}e_{1}+a_{2}e_{2}+\hdots
\end{equation}

\noindent where $(e_{m})_{m=1}^{\infty}$ is an orthonormal basis of $\mathcal{H}$ for $m=1,2,\hdots$. For $f\in \mathcal{H}$,

$$L^{*}f = \Big(\langle f,e_{m} \rangle_{\mathcal{H}}\Big)_{m=1}^{\infty}$$

\noindent and

$$LL^{*}f = L\Big(\langle f,e_{m}\rangle_{\mathcal{H}}\Big)_{m=1}^{\infty} = \sum_{m=1}^{\infty}\langle f,e_{m}\rangle_{\mathcal{H}}e_{m} = f.$$

\noindent Thus $LL^{*} = 1_{\mathcal{H}}.$ We can now find the frame operator for a Blaschke frame.

\begin{theorem}\label{blaschkeframetheorem}
Let $\mathcal{H}$ be a Hilbert space and let $B$ be a finite Blaschke product. A Blaschke frame for $B$ with isometry $V$ on $\mathcal{H}$ is a Parseval frame and the frame operator is the identity operator on $\mathcal{H}$. Moreover, 

$$f=\sum_{m=1}^{\infty}\langle f,b_{m}\rangle_{\mathcal{H}}b_{m}$$

\noindent for all $f\in \mathcal{H}$.

\end{theorem}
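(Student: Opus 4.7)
The plan is to identify the frame operator of the Blaschke frame as the identity by factoring the synthesis operator through the co-isometry $B(V^{*})$. By Proposition \ref{framepropblaschke} we already know $(b_{m})_{m=1}^{\infty}$ is a frame, so the synthesis operator $\mathfrak{S}:\ell^{2}(\mathbb{N})\to\mathcal{H}$ is a well-defined bounded map onto $\mathcal{H}$ by Theorem \ref{existencepreframe}, and the frame operator $\mathfrak{F} = \mathfrak{S}\mathfrak{S}^{*}$ is the object I want to compute.

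First I would express $\mathfrak{S}$ in terms of the Hilbert space isomorphism $L$ from equation (\ref{isomorphism}) and the operator $B(V^{*})$. Since $B(V^{*}) \in \mathcal{B}(\mathcal{H})$ is bounded, for any $(a_{m})_{m=1}^{\infty} \in \ell^{2}(\mathbb{N})$,
$$\mathfrak{S}(a_{m})_{m=1}^{\infty} = \sum_{m=1}^{\infty} a_{m} b_{m} = \sum_{m=1}^{\infty} a_{m} B(V^{*}) e_{m} = B(V^{*}) \sum_{m=1}^{\infty} a_{m} e_{m} = B(V^{*}) L (a_{m})_{m=1}^{\infty},$$
yielding the factorisation $\mathfrak{S} = B(V^{*}) L$, and hence $\mathfrak{S}^{*} = L^{*} B(V^{*})^{*}$.

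Next I would combine this with the two key identities already in hand: the relation $L L^{*} = 1_{\mathcal{H}}$ established just before the theorem, and the co-isometric identity $B(V^{*}) B(V^{*})^{*} = 1_{\mathcal{H}}$ from Lemma \ref{blaschkeoperatorcoisom}. Then
$$\mathfrak{F} = \mathfrak{S}\mathfrak{S}^{*} = B(V^{*}) L L^{*} B(V^{*})^{*} = B(V^{*}) B(V^{*})^{*} = 1_{\mathcal{H}}.$$
Proposition \ref{optimalframebounds} gives the optimal frame bounds $A = \|\mathfrak{F}^{-1}\|_{\mathcal{B}(\mathcal{H})}^{-1} = 1$ and $B = \|\mathfrak{F}\|_{\mathcal{B}(\mathcal{H})} = 1$, so the Blaschke frame is Parseval. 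The reconstruction formula is then immediate from the definition of the frame operator in equation (\ref{frameoperatordef}): for every $f \in \mathcal{H}$,
$$f = 1_{\mathcal{H}} f = \mathfrak{F} f = \sum_{m=1}^{\infty} \langle f, b_{m}\rangle_{\mathcal{H}} b_{m}.$$

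The only point requiring any care is the interchange of $B(V^{*})$ with the infinite sum $\sum_{m} a_{m} e_{m}$, which is justified by continuity of the bounded operator $B(V^{*})$ on the norm-convergent series $L(a_{m})_{m=1}^{\infty}$. I do not expect a serious obstacle here, as the theorem reduces to a clean algebraic manipulation made possible by the co-isometry result of the previous section.
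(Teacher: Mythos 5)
Your argument is correct and follows essentially the same route as the paper: factor the synthesis operator as $\mathfrak{S}=B(V^{*})L$, use $LL^{*}=1_{\mathcal{H}}$ together with the co-isometry identity of Lemma \ref{blaschkeoperatorcoisom} to get $\mathfrak{F}=1_{\mathcal{H}}$, and read off the Parseval bounds from Proposition \ref{optimalframebounds}. The only cosmetic difference is that you obtain the reconstruction formula directly from $\mathfrak{F}f=f$ via equation (\ref{frameoperatordef}), while the paper expands $B^{*}(V^{*})f$ in the orthonormal basis; these are equivalent.
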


\begin{proof}
By Christensen \cite[Theorem 3.2.3]{Christensen}, to show that a Blaschke frame for $B$ with isometry $V$ is a frame, it is enough to show that $(b_{m})_{m=1}^{\infty}$ is a Bessel sequence, Definition \ref{besselsequencedef}, with a non-zero upper bound. Indeed, by Lemma \ref{existencepreframe} and equation (\ref{boundsforexistenceofframe}), we automatically have that $(b_{m})_{m=1}^{\infty}$ is a Bessel sequence with

$$\Bigg[\sum_{m=1}^{\infty}\lvert \langle f,b_{m}\rangle_{\mathcal{H}}\rvert^{2}\Bigg]^{1/2}\leq \dfrac{\|p(V^{*})\|_{\mathcal{B}(\mathcal{H})}}{{\min_{z\in\overline{\mathbb{D}}}|q(z)|}}\|f\|_{\mathcal{H}}.$$

\noindent where the zeroes of $q_d$ lie strictly outside $\overline{\mathbb{D}}$. By von Neumanns inequality, Theorem \ref{SchurNeumann}, since $V$ is an isometry on $\mathcal{H}$,

$$\|p(V^{*})\|_{\mathcal{B}(\mathcal{H})}\leq {\max_{\substack{z\in\overline{\mathbb{D}}}}}\lvert p(z)\rvert = {\max_{z\in\overline{\mathbb{D}}}}\lvert q(z)\rvert. $$

\noindent Thus 

$$\Bigg[\sum_{m=1}^{\infty}\lvert \langle f,b_{m}\rangle_{\mathcal{H}}\rvert^{2}\Bigg]^{1/2}\leq \dfrac{{\max_{z\in\overline{\mathbb{D}}}}\lvert q(z)\rvert}{{\min_{z\in\overline{\mathbb{D}}}|q(z)|}}\|f\|_{\mathcal{H}}$$

\noindent and we have a Bessel bound for the frame. Therefore the synthesis operator is well-defined. Moreover, we claim that the synthesis operator for a Blaschke frame is given by

\begin{equation}\label{synthesisoperator}
    \mathfrak{S}=B(V^{*})L: l^{2}(\mathbb{N})\rightarrow \mathcal{H},
\end{equation}

\noindent where $L: l^{2}(\mathbb{N})\rightarrow \mathcal{H}$ is the Hilbert space isomorphism defined in equation (\ref{isomorphism}) and $(e_{m})_{m=1}^{\infty}$ is an orthonormal basis of $\mathcal{H}$. For any sequence $(a_{m})_{m=1}^{\infty}\in l^{2}(\mathbb{N})$,

\begin{align*}
\mathfrak{S}(a_{m})_{m=1}^{\infty} &= B(V^{*})L(a_{m})_{m=1}^{\infty} \\
&= B(V^{*})\sum_{m=1}^{\infty}a_{m}e_{m} \\
&= \sum_{m=1}^{\infty}a_{m}B(V^{*})e_{m} \\
&=\sum_{m=1}^{\infty}a_{m}b_{m}.
\end{align*}

\noindent By \cite[Lemma 3.2.1]{Christensen}, $\mathfrak{S}^{*}$ is also well-defined and given by

$$\mathfrak{S}^{*}f = \Big\{\langle f,b_{m}\rangle_{\mathcal{H}}\Big\}_{m=1}^{\infty}.$$

\noindent Moreover, as $LL^{*}=1_{\mathcal{H}}$, the frame operator 

\begin{align*}
    \mathfrak{F}&=\mathfrak{S}\mathfrak{S}^{*}\\
    &=B(V^{*})LL^{*}B^{*}(V^{*})\\
    &=B(V^{*})1_{\mathcal{H}}B^{*}(V^{*})\\
    &=B(V^{*})B^{*}(V^{*}).
\end{align*}

\noindent By Lemma \ref{blaschkeoperatorcoisom}, $B(V^{*})B^{*}(V^{*})=1_{\mathcal{H}}$, thus $\mathfrak{F}=1_{\mathcal{H}}$. By Proposition \ref{optimalframebounds}, the optimal frame bounds are

\begin{align*}
    A &= \|\mathfrak{F}^{-1}\|^{-1}_{\mathcal{B}(\mathcal{H})} = \|\mathfrak{S}^{\dagger}\|^{-2}_{\mathcal{B}(\mathcal{H})},\\
    B &= \|\mathfrak{F}\|_{\mathcal{B}(\mathcal{H})} = \|\mathfrak{S}\|^{2}_{\mathcal{B}(\mathcal{H})}.
\end{align*}

\noindent For the synthesis operator $\mathfrak{S}$ for a Blaschke frame from equation (\ref{synthesisoperator}) given by $\mathfrak{S}=B(V^{*})L$ and the corresponding frame operator $\mathfrak{F}=1_{\mathcal{H}}$, we have $A=B=1$. Thus, a Blaschke frame is a Parseval frame.

 Moreover, for all $f\in \mathcal{H}$, since $B(V^{*})$ is a co-isometry,

\begin{align*}
    f &= B(V^{*})B^{*}(V^{*})f\\
    &= B(V^{*})\sum_{m=1}^{\infty}\langle B^{*}(V^{*})f, e_{m}\rangle_{\mathcal{H}}e_{m}\\
    &= \sum_{m=1}^{\infty}\langle f, b_{m}\rangle_{\mathcal{H}}b_{m}.
\end{align*}\end{proof}

\section{The redundancy condition for a Blaschke frame with an isometry}\label{Blaschkeframes}

If $\mathcal{H}$ is a Hilbert space and $K$ is a subset of $\mathcal{H}$, we denote by $\mathcal{H}\ominus K$ the orthogonal complement of $K$ in $\mathcal{H}$, that is,

$$\mathcal{H}\ominus K=\Big\{x\in\mathcal{H}: \langle x,y\rangle_{\mathcal{H}}=0~\text{for all}~y\in K\Big\}.$$

\begin{lemma}{\normalfont{\cite[Lemma 1]{severino}}}\label{orthogcomplementlemma}
    Let $V\in\mathcal{B}(\mathcal{H})$. Then

    $$\mathrm{Ker}(V^{*})=\mathcal{H}\ominus V\mathcal{H}.$$
\end{lemma}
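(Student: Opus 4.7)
The statement is a standard adjoint/orthogonality identity, and my plan is to prove it by a direct double inclusion via the defining property of the adjoint.

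The key tool is the identity $\langle V^{*}x, y\rangle_{\mathcal{H}} = \langle x, Vy\rangle_{\mathcal{H}}$, valid for all $x, y \in \mathcal{H}$, combined with the elementary fact that a vector $z \in \mathcal{H}$ is zero if and only if $\langle z, y\rangle_{\mathcal{H}} = 0$ for every $y \in \mathcal{H}$. With these two observations the proof is essentially a single chain of equivalences: $x \in \mathrm{Ker}(V^{*})$ iff $V^{*}x = 0$ iff $\langle V^{*}x, y\rangle_{\mathcal{H}} = 0$ for all $y \in \mathcal{H}$ iff $\langle x, Vy\rangle_{\mathcal{H}} = 0$ for all $y \in \mathcal{H}$ iff $x \perp V\mathcal{H}$ iff $x \in \mathcal{H} \ominus V\mathcal{H}$.

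Concretely, I would first take $x \in \mathrm{Ker}(V^{*})$, compute $\langle x, Vy\rangle_{\mathcal{H}} = \langle V^{*}x, y\rangle_{\mathcal{H}} = 0$ for every $y \in \mathcal{H}$, and conclude $x \in \mathcal{H} \ominus V\mathcal{H}$. Then for the reverse inclusion take $x \in \mathcal{H} \ominus V\mathcal{H}$, so $\langle x, Vy\rangle_{\mathcal{H}} = 0$ for every $y \in \mathcal{H}$; rewrite this as $\langle V^{*}x, y\rangle_{\mathcal{H}} = 0$ for every $y$, and take $y = V^{*}x$ to force $\|V^{*}x\|_{\mathcal{H}}^{2} = 0$, hence $x \in \mathrm{Ker}(V^{*})$.

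There is essentially no obstacle: the lemma only uses the definition of the adjoint and the nondegeneracy of the Hilbert space inner product. Note that no topological subtlety about the range $V\mathcal{H}$ is needed because the orthogonal complement of a set equals the orthogonal complement of its closed linear span; the inner product condition captures this automatically. The statement is therefore a pure adjoint computation rather than a structural result about the specific isometry $V$ appearing elsewhere in the paper, and the proof should be completed in a few lines.
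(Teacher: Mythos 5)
Your proof is correct: the paper does not prove this lemma at all (it simply cites \cite[Lemma 1]{severino}), and your double-inclusion argument via the adjoint identity $\langle V^{*}x,y\rangle_{\mathcal{H}}=\langle x,Vy\rangle_{\mathcal{H}}$ is the standard proof, consistent with the paper's definition of $\mathcal{H}\ominus K$ as the set of vectors orthogonal to every element of $K$. Your closing remark that no topological subtlety about the range is needed is also accurate, since that definition is stated for an arbitrary subset $K$.
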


We recall the definition of a {\it{unilateral shift}}.

\begin{definition}{\normalfont{\cite[Section 1]{NagyFoais}}}\label{wanderingsubspace}
    Let $V$ be an isometry on a Hilbert space $\mathcal{H}$. A subspace $\mathcal{L}$ of $\mathcal{H}$ is called a {\normalfont{wandering space}} for $V$ if $V^{p}\mathcal{L}\perp V^{q}\mathcal{L}$ for every pair of integers $p,q\geq 0$ and $p\neq q$; equivalently, 

    $$V^{j}\mathcal{L}\perp\mathcal{L}~~\text{for}~j=1,\hdots.$$

\noindent An isometry $S$ on $\mathcal{H}$ is called a {\normalfont{unilateral shift}} if there exists a subspace $\mathcal{L}$ of $\mathcal{H}$ which is wandering for $S$ and $\mathcal{H}=\bigoplus_{j=0}^{\infty}S^{j}\mathcal{L}$. In this case $\mathcal{L}$ is called {\normalfont{generating}} for $S$.

\end{definition}

\noindent For a unilateral shift $S$ on a Hilbert space $\mathcal{H}$, $\sigma(S)=\overline{\mathbb{D}}$.

Let $S$ be a unilateral shift on a Hilbert space $\mathcal{H}$ and let $(e_{m})_{m=1}^{\infty}$ be an orthonormal basis for $\mathcal{H}$. For any unilateral shift $S$, there exists an index function $\Lambda:\mathbb{N}\rightarrow\mathbb{N}$, where $\mathbb{N}$ denotes the set of natural numbers $1,2,\hdots$, such that

\begin{equation}\label{kernelofshiftstar}
\mathrm{Ker}(S^{*})=\overline{\mathrm{Span}}\{e_{\Lambda(n)}\}_{n\geq 1}.
\end{equation}

\noindent A simple example to demonstrate this is when the unilateral shift is the right shift on $\mathcal{H}$ defined by $Se_{m}=e_{m+1}$. We have $\mathrm{Ker}(S^{*})=\overline{\mathrm{Span}}\{e_{1}\}$ and an index function $\Lambda$ that satisfies $\Lambda(1)=1$ and $\Lambda(n)=0$ for $n>1$. Another example involves the multiplication operator $M_{p}$ on a Hilbert space $\mathcal{H}$, where $p$ is a prime number and is defined by

\begin{equation}\label{mpproperty}
M_{p}e_{m}=e_{pm},
\end{equation}

\noindent where $(e_{m})_{m=1}^{\infty}$ is an orthonormal basis for $\mathcal{H}$. Indeed, the proof that $M_{p}$ is a unilateral shift was considered when $\mathcal{H}=L^{2}(0,1)$, the space of square integrable functions on the interval $(0,1)$, by Boulton and Lord in \cite[Lemma 2]{BL}. In this case, $e_{m}(x)=\sqrt{2}\sin(m\pi x)$ for all $x\in(0,1)$ and $m=1,2,\hdots$. Since $\mathrm{Ker}(M_{p}^{*})=\{e_{n}\}_{n\not\equiv_{p} 0}$, the index function $\Lambda$ in this case is the function satisfying $\Lambda(n)=n$ when $n\not\equiv_{p}0$ and zero otherwise. Note that there exists positive integers $n\not\equiv_{p}0$ and $s=1,2,\hdots$ such that $m=np^{s-1}$ for $m=1,2,\hdots$. One can check that the proof found in \cite[Lemma 2]{BL} is not limited to the Hilbert space $L^{2}(0,1)$. 

This acts as motivation for the next result, where we need the understanding of how a unilateral shift acts on orthonormal basis elements.

\begin{lemma}\label{propforonbshifts}
    Let $(e_{m})_{m=1}^{\infty}$ be an orthonormal basis of a Hilbert space $\mathcal{H}$ and let $S$ be a unilateral shift on $\mathcal{H}$ with $\mathrm{Ker}(S^{*})=\overline{\mathrm{Span}}\{e_{\Lambda(n)}\}_{n\geq 1}$ where $\Lambda:\mathbb{N}\rightarrow\mathbb{N}$ is an index function. There exists a finite sequence of orthonormal basis vectors corresponding to each $e_{m}$, denoted

    $$e_{\Lambda(n)}^{(s-1)},\hdots,e_{\Lambda(n)}^{(0)},$$

    \noindent where $e_{\Lambda(n)}^{(s-1)}=e_{m}$ and $e_{{\Lambda(n)}}^{(0)}=e_{\Lambda(n)}$ for a positive integer $s$ such that

\begin{equation}\label{sequenceforunishift}
S^{*}e_{m}=e_{\Lambda(n)}^{(s-2)},\hdots,~S^{*}e_{\Lambda(n)}^{(1)}=e_{\Lambda(n)}.
\end{equation}

\noindent That is, $(S^{*})^{k}e_{m}=e_{\Lambda(n)}^{(s-1-k)}$ for $k=0,1,\hdots s-1$. 
\end{lemma}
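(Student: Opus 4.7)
The plan is to apply the Wold decomposition to the unilateral shift $S$, identify each $e_{m}$ as an iterate of some element of the wandering subspace, and then use the isometric identity $S^{*}S=1_{\mathcal{H}}$ to compute the action of $S^{*}$ along that iterate.

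First, I would invoke the Wold decomposition (Theorem~\ref{Wold}) to write $\mathcal{H}=\bigoplus_{j\geq 0}S^{j}\mathcal{L}$, where $\mathcal{L}=\mathrm{Ker}(S^{*})$. By hypothesis, $\mathcal{L}=\overline{\mathrm{Span}}\{e_{\Lambda(n)}\}_{n\geq 1}$, so $(e_{\Lambda(n)})_{n\geq 1}$ is an orthonormal basis of $\mathcal{L}$. Since each $S^{j}$ is an isometry from $\mathcal{L}$ onto $S^{j}\mathcal{L}$, the collection $\{S^{j}e_{\Lambda(n)}:j\geq 0,\,n\geq 1\}$ is an orthonormal basis of $\mathcal{H}$. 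Comparing this with the given orthonormal basis $(e_{m})_{m=1}^{\infty}$, as is transparent in the two paradigm examples recalled just before the lemma (the right shift with $Se_{m}=e_{m+1}$, and the multiplier $M_{p}$ with $M_{p}e_{m}=e_{pm}$), every $e_{m}$ coincides with a unique vector $S^{s-1}e_{\Lambda(n)}$ for unique positive integers $s$ and $n$, with $s=1$ precisely when $e_{m}\in\mathrm{Ker}(S^{*})$.

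With this identification in hand, I would define $e_{\Lambda(n)}^{(k)}:=S^{k}e_{\Lambda(n)}$ for $k=0,1,\dots,s-1$, so that $e_{\Lambda(n)}^{(0)}=e_{\Lambda(n)}$, $e_{\Lambda(n)}^{(s-1)}=e_{m}$, and each $e_{\Lambda(n)}^{(k)}$ is an orthonormal basis vector of $\mathcal{H}$. Since $S$ is an isometry, $S^{*}S=1_{\mathcal{H}}$, and hence for every $k$ with $1\leq k\leq s-1$,
$$S^{*}e_{\Lambda(n)}^{(k)}=S^{*}S^{k}e_{\Lambda(n)}=S^{k-1}e_{\Lambda(n)}=e_{\Lambda(n)}^{(k-1)}.$$
Iterating this identity yields the chain of equations in (\ref{sequenceforunishift}) and, more generally, $(S^{*})^{k}e_{m}=e_{\Lambda(n)}^{(s-1-k)}$ for $k=0,1,\dots,s-1$.

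The main obstacle is not the algebra but the identification $e_{m}=S^{s-1}e_{\Lambda(n)}$. The Wold decomposition guarantees that $\{S^{j}e_{\Lambda(n)}\}_{j,n}$ is an orthonormal basis of $\mathcal{H}$, but it does not, a priori, coincide as a set of vectors with the prescribed basis $(e_{m})_{m=1}^{\infty}$; one needs to know that $(e_{m})_{m=1}^{\infty}$ is \emph{adapted} to the shift in the sense that applying $S$ to a kernel vector $e_{\Lambda(n)}$ again produces a basis vector. This is exactly the compatibility exhibited by the right shift and the multiplier $M_{p}$ discussed before the lemma, and it is what legitimises the identification above. Once that compatibility is in place, the rest of the argument is the short computation with $S^{*}S=1_{\mathcal{H}}$ displayed above.
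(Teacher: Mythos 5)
Your proposal is correct and follows essentially the same route as the paper: decompose $\mathcal{H}=\bigoplus_{j\geq 0}S^{j}\mathcal{L}$ with $\mathcal{L}=\mathrm{Ker}(S^{*})$, identify $e_{m}=S^{s-1}e_{\Lambda(n)}$, and read off the chain from $S^{*}S=1_{\mathcal{H}}$. The obstacle you flag --- that $\{S^{j}e_{\Lambda(n)}\}$ being an orthonormal basis does not by itself force it to coincide \emph{as a set of vectors} with $(e_{m})_{m=1}^{\infty}$, so an adaptedness hypothesis is really being used --- is genuine, and the paper's own proof passes over it silently by simply asserting $\bigcup_{k\geq 0}(S^{k}e_{\Lambda(n)})_{n\geq 1}=(e_{m})_{m=1}^{\infty}$; your explicit computation $S^{*}e_{\Lambda(n)}^{(k)}=S^{*}S^{k}e_{\Lambda(n)}=e_{\Lambda(n)}^{(k-1)}$ is if anything cleaner than the paper's.
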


\begin{proof}
    Since $S$ is a unilateral shift on $\mathcal{H}$, we have $\mathrm{Ker}(S^{*})$ is a wandering subspace for $S$, that is, $\mathcal{H}=\oplus_{j=0}^{\infty} S^{j}\mathcal{L}$ where $\mathcal{L}=\mathrm{Ker}(S^{*})$. Choose any $n_{1}$ and $n_{2}$ such that $e_{\Lambda(n_{1})}$ and $e_{\Lambda(n_{2})}$ belong to $\mathrm{Ker}(S^{*})$. Then

$$\Big\langle S^{k_{1}}e_{\Lambda(n_{1})},S^{k_{2}}e_{\Lambda(n_{2})}\Big\rangle_{\mathcal{H}}=0$$

\noindent for $k_{1},k_{2}=0,1,\hdots$ such that $k_{1}\neq k_{2}$ when $n_{1}=n_{2}$. Moreover, as $S$ is an isometry, $\|S^{k}e_{\Lambda(n)}\|_{\mathcal{H}}=1$ for a positive integer $n$ and $k=0,1,\hdots$. Therefore, for each $k=0,1,\hdots$ each vector in the sequence $(S^{k}e_{\Lambda(n)})_{n\geq 1}$ is orthonormal in $\mathcal{H}$. Moreover, when $k\neq 0$, the subspace spanned by $(S^{k}e_{\Lambda(n)})_{n\geq 1}$ is orthogonal to $\mathrm{Ker}(S^{*})$. Hence, $\cup_{k=0}^{\infty} (S^{k}e_{\Lambda(n)})_{n\geq 1} =(e_{m})_{m=1}^{\infty}$. 

For some $s=1,2,\hdots$ and a positive integer $n$, let $e_{m}=S^{s-1}e_{\Lambda(n)}$ for an $m=1,2,\hdots$. We can define a finite sequence of orthonormal basis vectors corresponding to $e_{m}$ such that

$$Se_{\Lambda(n)}=e_{\Lambda(n)}^{(1)},~ Se^{(1)}_{\Lambda(n)}=e_{\Lambda(n)}^{(2)},\hdots,Se_{\Lambda(n)}^{(s-2)}=e_{m}.$$

\noindent Equivalently, we have that equation (\ref{sequenceforunishift}) holds, where $e_{\Lambda(n)}^{(s-1)}=e_{m}$ and $e_{\Lambda(n)}^{(0)}=e_{\Lambda(n)}$.\end{proof}

For a unilateral shift $S$ on a Hilbert space $\mathcal{H}$, we know that $\mathrm{Ker}(S^{*})$ is a wandering subspace for S. For a general isometry $V$, we cannot make this claim. Instead, observe that the kernel of an operator $V$ in $\mathcal{H}$ is always a closed subspace of $\mathcal{H}$. Therefore $\bigoplus_{j=0}^{\infty}V^{j}\mathcal{H}$ is a closed subspace in $\mathcal{H}$. This is the basis for the next theorem, the {\it{Wold decomposition}} of an isometry $V$ on a Hilbert space, \cite[Theorem 1.1]{NagyFoais} and \cite[Section 1.3]{RR}.

\begin{theorem}{\normalfont{\cite[Theorem 2]{severino}}}\label{Wold}
    Let $\mathcal{H}$ be a Hilbert space and let $V$ be an isometry on $\mathcal{H}$. Then $\mathcal{H}$ decomposes uniquely into an orthogonal sum $\mathcal{H}=\mathcal{H}_{1}\oplus \mathcal{H}_{2}$ such that

    $$V\mathcal{H}_{1}=\mathcal{H}_{1},~~V\mathcal{H}_{2}\subset \mathcal{H}_{2}.$$

\noindent Moreover, the restriction of $V$ on $\mathcal{H}_{1}$, $V|_{\mathcal{H}_{1}}$, is unitary. The restriction of $V$ on $\mathcal{H}_{2}$, $V|_{\mathcal{H}_{2}}$, is a unilateral shift. In particular, for 

$$\mathcal{L}=\mathcal{H}\ominus V\mathcal{H},$$

\noindent then

$$\mathcal{H}_{1}=\bigcap_{j=0}^{\infty}V^{j}\mathcal{H},~~\mathcal{H}_{2}=\bigoplus_{j=0}^{\infty}V^{j}\mathcal{L}.$$
    
\end{theorem}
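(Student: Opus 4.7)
The plan is to construct $\mathcal{H}_{1}$ and $\mathcal{H}_{2}$ directly from $V$ and the subspace $\mathcal{L}:=\mathcal{H}\ominus V\mathcal{H}$, verify the required invariance and shift/unitary properties, and conclude with uniqueness.

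First I would check that $\mathcal{L}$ is a wandering subspace for $V$ in the sense of Definition \ref{wanderingsubspace}. By Lemma \ref{orthogcomplementlemma}, $\mathcal{L}=\mathrm{Ker}(V^{*})$. For $j\geq 1$ and $\ell,\ell'\in\mathcal{L}$, the vector $V^{j}\ell'$ lies in $V\mathcal{H}$ while $\ell\in\mathcal{L}\perp V\mathcal{H}$, so $\langle\ell,V^{j}\ell'\rangle_{\mathcal{H}}=0$. Combined with $V^{*}V=1_{\mathcal{H}}$, this yields $V^{p}\mathcal{L}\perp V^{q}\mathcal{L}$ whenever $p\neq q$. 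I then set $\mathcal{H}_{2}:=\bigoplus_{j=0}^{\infty}V^{j}\mathcal{L}$, a closed subspace of $\mathcal{H}$ with $V\mathcal{H}_{2}=\bigoplus_{j=1}^{\infty}V^{j}\mathcal{L}\subseteq\mathcal{H}_{2}$, so $V|_{\mathcal{H}_{2}}$ is a unilateral shift with generating subspace $\mathcal{L}$.

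Next I would define $\mathcal{H}_{1}:=\mathcal{H}\ominus\mathcal{H}_{2}$ and identify it with $\bigcap_{j=0}^{\infty}V^{j}\mathcal{H}$ by telescoping. Starting from $\mathcal{H}=\mathcal{L}\oplus V\mathcal{H}$ and inductively applying $V$ (which preserves orthogonality, being an isometry) to expand the rightmost summand, I obtain, for each $n\geq 0$,
$$\mathcal{H}=\Bigl(\bigoplus_{j=0}^{n-1}V^{j}\mathcal{L}\Bigr)\oplus V^{n}\mathcal{H}.$$
Taking $n\to\infty$ shows that the orthogonal complement of $\mathcal{H}_{2}$ in $\mathcal{H}$ is exactly $\bigcap_{n\geq 0}V^{n}\mathcal{H}$, giving the stated formula for $\mathcal{H}_{1}$.

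To establish $V\mathcal{H}_{1}=\mathcal{H}_{1}$, take $h\in\mathcal{H}_{1}$; then for each $n$, $h\in V^{n+1}\mathcal{H}$ forces $h=Vg_{n}$ with $g_{n}\in V^{n}\mathcal{H}$, and injectivity of the isometry $V$ makes all $g_{n}$ coincide with a single $g\in\bigcap_{n}V^{n}\mathcal{H}=\mathcal{H}_{1}$, so $V|_{\mathcal{H}_{1}}$ is a surjective isometry, i.e.\ unitary. For uniqueness, suppose $\mathcal{H}=\mathcal{H}_{1}'\oplus\mathcal{H}_{2}'$ is any decomposition satisfying the conclusions. Since $V\mathcal{H}_{1}'=\mathcal{H}_{1}'$, applying $V^{n}$ gives $V^{n}\mathcal{H}=\mathcal{H}_{1}'\oplus V^{n}\mathcal{H}_{2}'$, and the shift structure on $\mathcal{H}_{2}'$ forces $\bigcap_{n}V^{n}\mathcal{H}_{2}'=\{0\}$; intersecting over $n$ yields $\mathcal{H}_{1}=\bigcap_{n}V^{n}\mathcal{H}=\mathcal{H}_{1}'$, and hence $\mathcal{H}_{2}=\mathcal{H}_{2}'$. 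The main obstacle I anticipate is the inductive step in the telescoping decomposition, specifically the bookkeeping needed to verify that applying $V$ to $\mathcal{H}=\mathcal{L}\oplus V\mathcal{H}$ produces a genuinely orthogonal decomposition $V\mathcal{H}=V\mathcal{L}\oplus V^{2}\mathcal{H}$ inside $V\mathcal{H}$; once this is in hand, the shift property on $\mathcal{H}_{2}$, the unitarity of $V|_{\mathcal{H}_{1}}$, and uniqueness all follow cleanly.
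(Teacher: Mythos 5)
Your argument is correct, and it is essentially the classical proof of the Wold decomposition as given in the cited sources (Sz.-Nagy--Foias and Severino); the paper itself offers no proof of Theorem \ref{Wold}, quoting it as a known result. All the key steps are in place: $\mathcal{L}=\mathrm{Ker}(V^{*})$ is wandering, the telescoped decomposition $\mathcal{H}=\bigl(\bigoplus_{j=0}^{n-1}V^{j}\mathcal{L}\bigr)\oplus V^{n}\mathcal{H}$ identifies $\mathcal{H}_{1}=\mathcal{H}\ominus\mathcal{H}_{2}$ with $\bigcap_{n}V^{n}\mathcal{H}$, injectivity of $V$ gives surjectivity (hence unitarity) of $V|_{\mathcal{H}_{1}}$, and uniqueness follows from $\bigcap_{n}V^{n}\mathcal{H}_{2}'=\{0\}$ for any shift part.
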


\noindent Note that for the Wold decomposition, we can apply Lemma \ref{orthogcomplementlemma} to rewrite the condition $\mathcal{L}=\mathcal{H}\ominus V\mathcal{H}$ as $\mathcal{L}=\mathrm{Ker}(V^{*})$.

Let $B$ be a finite Blaschke product of degree $d$. Since $B\in\mathrm{Hol}(\overline{\mathbb{D}})$, we have that $B$ can be extended analytically to an open set $U$ including $\overline{\mathbb{D}}$ as a subset, such that no poles of $B$ lie inside $U$. Hence $B\in\mathrm{Hol}(U)$. Such a choice of open set $U$ always exists, as the poles of $B$ lie strictly outside $\overline{\mathbb{D}}$. Thus, a contour of small enough radius can always be chosen to exclude all the poles of $B$. We require such a choice of $U$ as $\sigma(V^{*})\subseteq\overline{\mathbb{D}}$.

Recall the Riesz-Dunford functional calculus, Definition \ref{rdfunc}. Let $\mathcal{H}$ be a Hilbert space and let $V$ be an isometry on $\mathcal{H}$. Let $U$ be an open set in $\mathbb{C}$ such that $\overline{\mathbb{D}}\subset U$ where no poles of the function $B$ lie inside $U$ and let $\gamma$ be a closed rectifiable contour in $U \setminus \overline{\mathbb{D}}$ which winds once around each point in the spectrum of $\sigma(V^{*})\subseteq\overline{\mathbb{D}}$ and no times around each point in the complement of $U$. The Riesz-Dunford functional calculus defines $B(V^{*})$ for $B\in\mathrm{Hol}(U)$ by

\begin{equation}\label{rieszdunufordblaschkeshift}
    B(V^{*})=\dfrac{1}{2\pi i}\int_{\gamma}B(z)(z-V^{*})^{-1}~dz.
\end{equation}

In light of this set up, we have the following result regarding the norm of the vectors $b_{m}$ of a Blaschke frame for $m=1,2,\hdots$. The use of Hilbert space models, not only simplifies the calculation, but is also necessary to solve the redundancy question for a Blaschke frame for $B$ with an isometry $V$ on a Hilbert space $\mathcal{H}$. Recall from equation (\ref{TM}) the Malmquist-Takenaka functions corresponding to a finite Blaschke product $B$, equation (\ref{fbpform}), defined by

$$E_{j}(z):=\dfrac{(1-\lvert\lambda_{j}\rvert^{2})^{\frac{1}{2}}}{1+\overline{\lambda_{j}}z}\prod_{k=1}^{j-1}\dfrac{z+\lambda_{k}}{1+\overline{\lambda_{k}}z},$$

\noindent for all $z\in\mathbb{D}$ and $j=1,\hdots,d$.

\begin{theorem}\label{normofblaschkevectors}
Let $\mathcal{H}$ be a Hilbert space and let $(E_{j})_{j=1}^{d}$ be the Malmquist-Takenaka functions corresponding to a finite Blaschke product $B$. For every vector $b_{m}$ in a Blaschke frame for $B$ with isometry $V$ on $\mathcal{H}$ for $m=1,2,\hdots$, there exists a positive integer $s$ such that
    
$$\|b_{m}\|_{\mathcal{H}}^{2} = 1 - \dfrac{1}{\big((s-1)!\big)^{2}}\sum_{j=1}^{d}\Bigg\lvert\dfrac{d^{s-1}E_{j}(0)}{dz^{s-1}}\Bigg\rvert^{2}.$$
    
\end{theorem}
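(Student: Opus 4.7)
The strategy is to derive $\|b_m\|_{\mathcal{H}}^2$ from an operator-valued form of the model identity in Lemma \ref{blasckhelemma}, and then evaluate the resulting projection explicitly using the Wold decomposition of $V$. The first step is to lift the scalar identity
$$1 - \overline{B(w)}B(z) = \sum_{j=1}^d \overline{E_j(w)}\bigl(1-\overline{w}z\bigr)E_j(z)$$
to the operator identity
$$1_{\mathcal{H}} - B(V^*)^{*} B(V^*) = \sum_{j=1}^d E_j(V^*)^{*}\bigl(1_{\mathcal{H}} - VV^*\bigr)E_j(V^*), \qquad (\star)$$
by substituting $z \mapsto V^*$ and $\overline{w} \mapsto V$. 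Each $E_j(V^*)$ is well defined by the Riesz-Dunford calculus, since every $E_j$ is holomorphic on a neighbourhood of $\overline{\mathbb{D}}$ (its poles $-1/\overline{\lambda_k}$ lie strictly outside $\overline{\mathbb{D}}$) and $\sigma(V^*)\subseteq\overline{\mathbb{D}}$. By Lemma \ref{orthogcomplementlemma} the factor $1_{\mathcal{H}} - VV^*$ is precisely the orthogonal projection $P_{\mathrm{Ker}(V^*)}$.

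Testing $(\star)$ against $e_m$ yields
$$\|b_m\|_{\mathcal{H}}^2 = 1 - \sum_{j=1}^d \bigl\|P_{\mathrm{Ker}(V^*)} E_j(V^*) e_m\bigr\|_{\mathcal{H}}^2.$$
To evaluate the individual projections I would invoke the Wold decomposition (Theorem \ref{Wold}) to write $\mathcal{H} = \mathcal{H}_1 \oplus \mathcal{H}_2$ with $V|_{\mathcal{H}_1}$ unitary and $V|_{\mathcal{H}_2}$ a unilateral shift, noting that $\mathrm{Ker}(V^*) \subseteq \mathcal{H}_2$ and $\mathcal{H}_1 \perp \mathrm{Ker}(V^*)$. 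Choosing $(e_m)$ compatibly with this decomposition, we may reduce to the case $e_m \in \mathcal{H}_2$ (basis vectors in the unitary part give the degenerate subcase $\|b_m\|_{\mathcal{H}}^2 = 1$, in which all projections on the right-hand side vanish). Lemma \ref{propforonbshifts} then supplies a unique positive integer $s$ and a vector $e_{\Lambda(n)} \in \mathrm{Ker}(V^*)$ with $(V^*)^{s-1} e_m = e_{\Lambda(n)}$, $(V^*)^{s} e_m = 0$, and $(V^*)^k e_m \in V\mathcal{H} = (\mathrm{Ker}(V^*))^{\perp}$ for $0 \leq k \leq s-2$.

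Next I would expand each $E_j$ as its Taylor series at the origin, $E_j(z) = \sum_{k=0}^{\infty} \frac{E_j^{(k)}(0)}{k!} z^k$, whose radius of convergence $\min_k |\lambda_k|^{-1}$ strictly exceeds the spectral radius of $V^*$, so that $\sum_k \frac{E_j^{(k)}(0)}{k!} (V^*)^k$ converges in operator norm and agrees with $E_j(V^*)$. Applied to $e_m$ and using $(V^*)^k e_m = 0$ for $k \geq s$, this collapses to the finite sum $E_j(V^*) e_m = \sum_{k=0}^{s-1} \frac{E_j^{(k)}(0)}{k!} (V^*)^k e_m$. Only the $k = s-1$ term survives projection onto $\mathrm{Ker}(V^*)$, yielding $P_{\mathrm{Ker}(V^*)} E_j(V^*) e_m = \frac{E_j^{(s-1)}(0)}{(s-1)!} e_{\Lambda(n)}$ with squared norm $|E_j^{(s-1)}(0)|^2 / ((s-1)!)^2$. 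Summing over $j$ produces the claimed formula.

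The principal obstacle is the rigorous verification of $(\star)$: because $V$ and $V^*$ do not commute, the symbolic substitution of operators into a scalar identity involving both $z$ and $\overline{w}$ is a priori only formal. The cleanest route is induction on the degree $d$. For $d = 1$ the identity reduces to a short algebraic computation hinging on $(1 + \lambda V)(1 + \overline{\lambda} V^*) - (V + \overline{\lambda})(V^* + \lambda) = (1 - |\lambda|^2)(1 - VV^*)$, a consequence of $V^* V = 1_{\mathcal{H}}$. For higher $d$ one peels off one Blaschke factor at a time, using the product form \eqref{blaschkeoperator1} of $B(V^*)$ together with the nested recursion \eqref{TM} that defines the Malmquist-Takenaka functions. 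A minor technical point, that the orthonormal basis $(e_m)$ respect the Wold decomposition, is resolved by the standard fact that such a compatible basis always exists.
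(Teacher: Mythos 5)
Your proposal follows essentially the same route as the paper: lift the Malmquist--Takenaka model identity to the operator identity $1_{\mathcal{H}} - B(V^*)^*B(V^*) = \sum_{j=1}^d E_j(V^*)^*(1_{\mathcal{H}}-VV^*)E_j(V^*)$, identify $1_{\mathcal{H}}-VV^*$ with the projection onto $\mathrm{Ker}(V^*)$ via the Wold decomposition, and use Lemma \ref{propforonbshifts} to see that only the coefficient $E_j^{(s-1)}(0)/(s-1)!$ survives the projection. The only difference is that you evaluate $E_j(V^*)e_m$ through the Taylor expansion of $E_j$ (which terminates because $(V^*)^s e_m = 0$) where the paper uses the Riesz--Dunford double contour integral and Cauchy's formula for derivatives; these are equivalent, and your explicit attention to justifying the operator identity $(\star)$ is, if anything, more careful than the paper's.
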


\begin{proof}

By Lemma \ref{blasckhelemma}, there exists a model $(\mathbb{C}^{d},u)$ for any finite Blaschke product $B$, that is, there exists an analytic function $u:\mathbb{D}\rightarrow \mathbb{C}^{d}$ defined by $u(z) = (E_{1}(z),\hdots,E(z))$ such that

$$1-\overline{B(w)}B(z)=\sum_{j=1}^{d}\overline{E_{j}(w)}(1-\overline{w}z)E_{j}(z)$$

\noindent for all $z,w\in\mathbb{D}$. Note that the model can be extended analytically to $\overline{\mathbb{D}}$, since the poles of $B$ and the Malmquist-Takenaka functions are strictly outside $\overline{\mathbb{D}}$. The operator $B(V^{*})$ is defined by the Riesz-Dunford functional calculus by equation (\ref{rieszdunufordblaschkeshift}). A similar argument defines $E_{j}$ for each $j=1,\hdots,d$. Therefore, 

$$1_{\mathcal{H}}-B^{*}(V^{*})B(V^{*})=\sum_{j=1}^{d}E^{*}_{j}(V^{*})(1_{\mathcal{H}}-VV^{*})E_{j}(V^{*}).$$

By the Wold decomposition, $\mathcal{H}=\mathcal{H}_{1}\oplus\mathcal{H}_{2}$ such that $V|_{\mathcal{H}_{1}}$ is unitary on $\mathcal{H}_{1}$ and $V|_{\mathcal{H}_{2}}$ is a unilateral shift on $\mathcal{H}_{2}$. Let $V|_{\mathcal{H}_{1}}=U$ and $V|_{\mathcal{H}_{2}}=S$. Then $V$ has the matrix representation on $\mathcal{H}=\mathcal{H}_{1}\oplus\mathcal{H}_{2}$ given by

$$V=\begin{bmatrix}
U & 0\\
0 & S
\end{bmatrix}:\mathcal{H}_{1}\oplus\mathcal{H}_{2}\rightarrow\mathcal{H}_{1}\oplus\mathcal{H}_{2}.$$

\noindent As $U$ is unitary on $\mathcal{H}_{1}$ and $S$ is a unilateral shift on $\mathcal{H}_{2}$ with wandering subspace $\mathrm{Ker}(V^{*})$, Theorem \ref{Wold}, we have 

$$1_{\mathcal{H}}-VV^{*} = \begin{bmatrix}
1-UU^{*} & 0\\
0& 1-SS^{*}
\end{bmatrix} = \begin{bmatrix}
0_{\mathcal{H}_{1}} & 0\\
0& P_{\mathrm{Ker}(V^{*})}
\end{bmatrix},$$

\noindent  where $P_{\mathrm{Ker}(V^{*})}:\mathcal{H}_{2}\rightarrow\mathcal{H}_{2}$ is the orthogonal projection of $\mathcal{H}_{2}$ onto $\mathrm{Ker}(V^{*})$. Hence,

\begin{equation}\label{Blaschkeoperatorvaluedmodel1}
1_{\mathcal{H}}-B^{*}(V^{*})B(V^{*})=\sum_{j=1}^{d}E^{*}_{j}(V^{*})\begin{bmatrix}
0_{\mathcal{H}_{1}} & 0\\
0& P_{\mathrm{Ker}(V^{*})}
\end{bmatrix}E_{j}(V^{*}).
\end{equation}

\noindent Moreover,

$$\|b_{m}\|_{\mathcal{H}}^{2}=\Big\langle b_{m}, b_{m}\Big\rangle_{\mathcal{H}}
    =\Big\langle B(V^{*})e_{m}, B(V^{*})e_{m}\Big\rangle_{\mathcal{H}}
    =\Big\langle  B^{*}(V^{*})B(V^{*})e_{m},e_{m}\Big\rangle_{\mathcal{H}}.$$

\noindent By equation (\ref{Blaschkeoperatorvaluedmodel1}), 

$$\|b_{m}\|_{\mathcal{H}}^{2}= 1 - \sum_{j=1}^{d}\bigg\langle E^{*}_{j}(V^{*})\begin{bmatrix}
0_{\mathcal{H}_{1}} & 0\\
0& P_{\mathrm{Ker}(V^{*})}
\end{bmatrix}E_{j}(V^{*})e_{m},e_{m}\bigg\rangle_{\mathcal{H}}.$$

\noindent By the Riesz-Dunford functional calculus, there exist open sets $U_{1}$ and $U_{2}$ in $\mathbb{C}$, excluding the poles of the function $B$, for which $\overline{\mathbb{D}}\subset U_{1}$, $\overline{\mathbb{D}}\subset U_{2}$ and closed, rectifiable contours $\gamma_{1},\gamma_{2}$ in $U_{1}\setminus\overline{\mathbb{D}}$ and $U_{2}\setminus\overline{\mathbb{D}}$ respectively such that, for each $j=1,\hdots,d$,

\begin{align*}
&\bigg\langle E^{*}_{j}(V^{*})\begin{bmatrix}
0_{\mathcal{H}_{1}} & 0\\
0& P_{\mathrm{Ker}(V^{*})}
\end{bmatrix}E_{j}(V^{*})e_{m},e_{m}\bigg\rangle_{\mathcal{H}}\\
&=\Big(\dfrac{1}{2\pi i}\Big)\overline{\Big(\dfrac{1}{2\pi i}\Big)}\int_{\gamma_{1}}\int_{\gamma_{2}}\\
&\hspace{3cm}\bigg\langle \begin{bmatrix}
0_{\mathcal{H}_{1}} & 0\\
0& P_{\mathrm{Ker}(V^{*})}
\end{bmatrix}E_{j}(z)(z-V^{*})^{-1}e_{m},E_{j}(w)(w-V^{*})^{-1}e_{m}\bigg\rangle_{\mathcal{H}}~~dz~dw.\\
&=\Big(\dfrac{1}{2\pi i}\Big)\overline{\Big(\dfrac{1}{2\pi i}\Big)}\int_{\gamma_{1}}\int_{\gamma_{2}}\bigg(\dfrac{E_{j}(z)}{z}\bigg)\bigg(\overline{\dfrac{E_{j}(w)}{w}}\bigg)\\
&\hspace{4cm}\cdot\bigg\langle \begin{bmatrix}
0_{\mathcal{H}_{1}} & 0\\
0& P_{\mathrm{Ker}(V^{*})}
\end{bmatrix}\sum_{k=0}^{\infty}z^{-k}(V^{*})^{k}e_{m},
\sum_{l=0}^{\infty}w^{-l}(V^{*})^{l}e_{m}\bigg\rangle_{\mathcal{H}}~~dz~dw.
\end{align*}

Let $e_{m}=f_{m}+g_{m}$, where $f_{m}=P_{\mathcal{H}_{1}}e_{m}$ and $g_{m}=P_{\mathcal{H}_{2}}e_{m}$ such that $P_{\mathcal{H}_{j}}:\mathcal{H}\rightarrow \mathcal{H}$ is the orthogonal projection from $\mathcal{H}$ onto $\mathcal{H}_{j}$ for $j=1,2$. Then

$$\begin{bmatrix}
0_{\mathcal{H}_{1}} & 0\\
0& P_{\mathrm{Ker}(V^{*})}
\end{bmatrix}\sum_{k=0}^{\infty}z^{-k}(V^{*})^{k}e_{m} = \sum_{k=0}^{\infty}z^{-k}
 P_{\mathrm{Ker}(V^{*})}(S^{*})^{k}g_{m}.$$

\noindent By the Wold decomposition, $\mathcal{L}=\mathrm{Ker}(V^{*})$ is a wandering subspace for the unilateral shift $S$. By equation (\ref{kernelofshiftstar}), there exists an index function $\Lambda:\mathbb{N}\rightarrow \mathbb{N}$ such that $\mathcal{L}=\overline{\mathrm{Span}}\big\{e_{\Lambda(n)}\big\}_{n\geq 1}$ and $\mathcal{H}_{2}=\oplus_{j=0}^{\infty}V^{j}\mathcal{L}$. The projection of $e_{m}$ onto $\mathcal{H}_{2}$ gives us 

$$g_{m}=\sum_{l=0}^{\infty}\sum_{n\geq 1} \big\langle e_{m}, S^{l}e_{\Lambda(n)}\big\rangle_{\mathcal{H}_{2}}S^{l}e_{\Lambda(n)}.$$

\noindent Thus, for $k=0,1,\hdots$,

\begin{align*} P_{\mathrm{Ker}(V^{*})}(S^{*})^{k}g_{m}&=  P_{\mathrm{Ker}(V^{*})}(S^{*})^{k}\sum_{l=0}^{\infty}\sum_{n\geq 1} \big\langle e_{m}, S^{l}e_{\Lambda(n)}\big\rangle_{\mathcal{H}_{2}}S^{l}e_{\Lambda(n)}\\
&= \sum_{l=0}^{\infty}\sum_{n\geq 1} \big\langle e_{m}, S^{l}e_{\Lambda(n)}\big\rangle_{\mathcal{H}_{2}}P_{\mathrm{Ker}(V^{*})}S^{l-k}e_{\Lambda(n)}.
\end{align*}

\noindent For $l=0,1,\hdots$, we have $P_{\mathrm{Ker}(V^{*})}S^{l-k}e_{\Lambda(n)}$ is non-zero if and only if $l=k$. Hence,

$$P_{\mathrm{Ker}(V^{*})}(S^{*})^{k}g_{m}= \sum_{n\geq 1} \big\langle e_{m}, S^{k}e_{\Lambda(n)}\big\rangle_{\mathcal{H}_{2}}e_{\Lambda(n)}.$$

\noindent  Therefore,

\begin{equation}\label{disherem8}
\begin{bmatrix}
0_{\mathcal{H}_{1}} & 0\\
0& P_{\mathrm{Ker}(V^{*})}
\end{bmatrix}\sum_{k=0}^{\infty}z^{-k}(V^{*})^{k}e_{m}=\sum_{k=0}^{\infty}  \sum_{n\geq 1} z^{-k}\big\langle e_{m}, S^{k}e_{\Lambda(n)}\big\rangle_{\mathcal{H}_{2}}e_{\Lambda(n)}.
\end{equation}

By Lemma \ref{propforonbshifts}, there exists a finite sequence of orthonormal basis vectors $(e_{\Lambda(n)}^{(k)})_{k=0}^{s-1}$ corresponding to $e_{m}$ such that $(S^{*})^{k}e_{m}=e_{\Lambda(n)}^{(s-1-k)}$ for $k=0,1,\hdots s-1$, where $e_{m}=e_{\Lambda(n)}^{(s-1)}$ and $e_{\Lambda(n)}=e_{\Lambda(n)}^{(0)}$. Therefore, the expression $\big\langle e_{m}, S^{k}e_{\Lambda(n)}\big\rangle_{\mathcal{H}_{2}}$ from equation (\ref{disherem8}) is non-zero if and only if $k=s-1$ and $n$ is the positive integer such that $S^{s-1}e_{\Lambda(n)}=e_{m}$. Hence, for $e_{m}=e_{\Lambda(n)}^{(s-1)}$,

$$\begin{bmatrix}
0_{\mathcal{H}_{1}} & 0\\
0& P_{\mathrm{Ker}(V^{*})}
\end{bmatrix}\sum_{k=0}^{\infty}z^{-k}(V^{*})^{k}e_{m}=z^{-(s-1)}e_{\Lambda(n)},$$

\noindent where $e_{\Lambda(n)}\in\mathrm{Ker}(S^{*})$. Thus, for each $j=1,\hdots,d$,

\begin{align}
&\bigg\langle E^{*}_{j}(V^{*})\begin{bmatrix}
0_{\mathcal{H}_{1}} & 0\\
0& P_{\mathrm{Ker}(V^{*})}
\end{bmatrix}E_{j}(V^{*})e_{m}.e_{m}\bigg\rangle_{\mathcal{H}}\nonumber\\
&=\Big(\dfrac{1}{2\pi i}\Big)\overline{\Big(\dfrac{1}{2\pi i}\Big)}\int_{\gamma_{1}}\int_{\gamma_{2}}\bigg(\dfrac{E_{j}(z)}{z}\bigg)\bigg(\overline{\dfrac{E_{j}(w)}{w}}\bigg)\nonumber\\
&\hspace{4cm}\cdot\bigg\langle z^{-(s-1)}\begin{bmatrix}
0\\
e_{\Lambda(n)}\end{bmatrix},
\sum_{l=0}^{\infty}w^{-l}(V^{*})^{l}e_{m}\bigg\rangle_{\mathcal{H}}~~dz~dw.\label{hello}
\end{align}

For $g_{m}=\sum_{r=0}^{\infty}\sum_{n\geq 1} \big\langle e_{m}, S^{r}e_{\Lambda(n)}\big\rangle_{\mathcal{H}}S^{r}e_{\Lambda(n)}$,

\begin{align*}\sum_{l=0}^{\infty}w^{-l}(V^{*})^{l}e_{m}=\sum_{l=0}^{\infty}w^{-l}\begin{bmatrix}
    (U^{*})^{l}f_{m}\\
    (S^{*})^{l}g_{m}
\end{bmatrix}=\sum_{l=0}^{\infty}w^{-l}\begin{bmatrix}
    (U^{*})^{l}f_{m}\\
    \sum_{r=0}^{\infty}\sum_{n\geq 1}\langle e_{m},S^{r}e_{\Lambda(n)}\rangle_{\mathcal{H}_{2}}S^{r-l}e_{\Lambda(n)}
\end{bmatrix}.
\end{align*}

From equation (\ref{hello}), 

\begin{align}
&\bigg\langle z^{-(s-1)}\begin{bmatrix}
0\\
e_{\Lambda(n)}\end{bmatrix},
\sum_{l=0}^{\infty}w^{-l}(V^{*})^{l}e_{m}\bigg\rangle_{\mathcal{H}} \nonumber\\
&= \bigg\langle z^{-(s-1)}\begin{bmatrix}
0\\
e_{\Lambda(n)}\end{bmatrix}, \sum_{l=0}^{\infty}w^{-l}\begin{bmatrix}
    (U^{*})^{l}f_{m}\\
    \sum_{r=0}^{\infty}\sum_{n\geq 1}\langle e_{m},S^{r}e_{\Lambda(n)}\rangle_{\mathcal{H}_{2}}S^{r-l}e_{\Lambda(n)}
\end{bmatrix}\bigg\rangle_{\mathcal{H}}\nonumber\\
&=\bigg\langle z^{-(s-1)}
e_{\Lambda(n)}, \sum_{l=0}^{\infty}w^{-l}
    \sum_{r=0}^{\infty}\sum_{n\geq 1}\langle e_{m},S^{r}e_{\Lambda(n)}\rangle_{\mathcal{H}_{2}}S^{r-l}e_{\Lambda(n)}
\bigg\rangle_{\mathcal{H}_{2}}.\label{hiz}
\end{align}

\noindent The inner product $\langle e_{m},S^{r}e_{\Lambda(n)}\rangle_{\mathcal{H}_{2}}$ is non-zero if and only if $r=s-1$ and $n$ is the positive integer such that $e_{m}=S^{s-1}e_{\Lambda(n)}$. Furthermore, the inner product from equation (\ref{hiz}) is non-zero if and only if $l=s-1$. Therefore $r=l=s-1$ and

$$\bigg\langle z^{-(s-1)}\begin{bmatrix}
0\\
e_{\Lambda(n)}\end{bmatrix},
\sum_{l=0}^{\infty}w^{-l}(V^{*})^{l}e_{m}\bigg\rangle_{\mathcal{H}} = \bigg\langle z^{-(s-1)}
e_{\Lambda(n)},
w^{-(s-1)}e_{\Lambda(n)}\bigg\rangle_{\mathcal{H}_{2}}=(z\overline{w})^{-(s-1)}.$$

\noindent Thus,

\begin{align*}
\bigg\langle E^{*}_{j}(V^{*})\begin{bmatrix}
0_{\mathcal{H}_{1}} & 0\\
0& P_{\mathrm{Ker}(V^{*})}
\end{bmatrix}&E_{j}(V^{*})e_{m},e_{m}\bigg\rangle_{\mathcal{H}}\\
&=\Big(\dfrac{1}{2\pi i}\Big)\overline{\Big(\dfrac{1}{2\pi i}\Big)}\int_{\gamma_{1}}\int_{\gamma_{2}}\bigg(\dfrac{E_{j}(z)}{z}\bigg)\bigg(\overline{\dfrac{E_{j}(w)}{w}}\bigg)(z\overline{w})^{s-1}~~dz~dw\\
&= \bigg(\dfrac{1}{2\pi i}\int_{\gamma_{1}} \dfrac{E_{j}(z)}{z^{s}}~dz\bigg)\overline{\bigg(\dfrac{1}{2\pi i}\int_{\gamma_{2}}\dfrac{E_{j}(w)}{w^{s}}~dw\bigg)}\\
&=\Bigg(\dfrac{1}{(s-1)!}\dfrac{d^{s-1}E_{j}(0)}{dz^{s-1}}\Bigg)\overline{\Bigg(\dfrac{1}{(s-1)!}\dfrac{d^{s-1}E_{j}(0)}{dw^{s-1}}\Bigg)}\\
&=\dfrac{1}{\big((s-1)!\big)^{2}}\Bigg\lvert\dfrac{d^{s-1}E_{j}(0)}{dz^{s-1}}\Bigg\rvert^{2}.
\end{align*}

\noindent Hence, for each $m=1,2,\hdots$,

$$\|b_{m}\|_{\mathcal{H}}^{2} = 1 - \sum_{j=1}^{d}\bigg\langle P_{\mathrm{Ker}(V^{*})}E_{j}(V^{*})e_{m},E_{j}(V^{*})e_{m}\bigg\rangle_{\mathcal{H}}
    = 1 - \dfrac{1}{\big((s-1)!\big)^{2}}\sum_{j=1}^{d}\Bigg\lvert\dfrac{d^{s-1}E_{j}(0)}{dz^{s-1}}\Bigg\rvert^{2}.$$\end{proof}

The following test we state is to find the redundant and essential vectors of any frame, which was given by Christensen in \cite{Christensen}. This result is of the utmost importance. 

\begin{theorem}{\normalfont{\cite[Theorem 5.4.7]{Christensen}}}\label{redundancyresult}
Let $(x_{m})_{m=1}^{\infty}$ be a frame for a Hilbert space $\mathcal{H}$ with frame operator $\mathfrak{F}$. The removal of a vector $x_{k}$ from the frame $(x_{m})_{m=1}^{\infty}$ leaves either a frame or an incomplete set. More precisely:
    
    \begin{enumerate}
        \item If $\langle x_{k}, \mathfrak{F}^{-1}x_{k}\rangle_{\mathcal{H}}\neq 1$, then $(x_{m})_{m\neq k}^{\infty}$ is a frame for $\mathcal{H}$, that is, $x_{k}$ is redundant;
         \item  If $\langle x_{k}, \mathfrak{F}^{-1}x_{k}\rangle_{\mathcal{H}}=1$, then $(x_{m})_{m\neq k}^{\infty}$ is incomplete, that is, $x_{k}$ is essential.
    \end{enumerate}
    
\end{theorem}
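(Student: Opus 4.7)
The plan is to reduce the question to an invertibility statement about the frame operator of the deleted sequence, and then use a rank one perturbation computation. Let $T\colon \ell^{2}(\mathbb{N})\to\mathcal{H}$ be the synthesis operator of $(x_{m})_{m=1}^{\infty}$, so $\mathfrak{F}=TT^{*}$. For the deleted index $k$, view $\ell^{2}(\mathbb{N}\setminus\{k\})$ as $\{e_{k}\}^{\perp}\subset \ell^{2}(\mathbb{N})$ and let $T_{k}$ be the restriction $T|_{\{e_{k}\}^{\perp}}$. By Theorem \ref{existencepreframe}, $(x_{m})_{m\neq k}^{\infty}$ is a frame for $\mathcal{H}$ if and only if $T_{k}$ is surjective, equivalently if and only if the bounded self-adjoint operator $T_{k}T_{k}^{*}$ is invertible. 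A direct computation with $T_{k}^{*}f=T^{*}f-\langle f,x_{k}\rangle_{\mathcal{H}}e_{k}$ gives
\begin{equation*}
T_{k}T_{k}^{*}f=\mathfrak{F}f-\langle f,x_{k}\rangle_{\mathcal{H}}x_{k},
\end{equation*}
so the problem becomes: when is $\mathfrak{F}_{k}:=\mathfrak{F}-x_{k}\otimes x_{k}$ invertible on $\mathcal{H}$?

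Next I would factor out $\mathfrak{F}^{1/2}$. Since $\mathfrak{F}$ is positive and invertible by \cite[Lemma 5.1.5]{Christensen}, write
\begin{equation*}
\mathfrak{F}_{k}=\mathfrak{F}^{1/2}\bigl(1_{\mathcal{H}}-u_{k}\otimes u_{k}\bigr)\mathfrak{F}^{1/2},\qquad u_{k}:=\mathfrak{F}^{-1/2}x_{k}.
\end{equation*}
The operator $u_{k}\otimes u_{k}$ is a rank one positive operator with single nonzero eigenvalue $\|u_{k}\|_{\mathcal{H}}^{2}=\langle \mathfrak{F}^{-1}x_{k},x_{k}\rangle_{\mathcal{H}}=\langle x_{k},\mathfrak{F}^{-1}x_{k}\rangle_{\mathcal{H}}$ on the line spanned by $u_{k}$. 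Hence $1_{\mathcal{H}}-u_{k}\otimes u_{k}$ is invertible on $\mathcal{H}$ if and only if $\langle x_{k},\mathfrak{F}^{-1}x_{k}\rangle_{\mathcal{H}}\neq 1$, and since $\mathfrak{F}^{1/2}$ is invertible, the same equivalence holds for $\mathfrak{F}_{k}$.

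This already yields case (1): when $\langle x_{k},\mathfrak{F}^{-1}x_{k}\rangle_{\mathcal{H}}\neq 1$, the operator $\mathfrak{F}_{k}=T_{k}T_{k}^{*}$ is invertible, $T_{k}$ is surjective, and by Theorem \ref{existencepreframe} the sequence $(x_{m})_{m\neq k}^{\infty}$ is a frame, so $x_{k}$ is redundant. For case (2), when $\langle x_{k},\mathfrak{F}^{-1}x_{k}\rangle_{\mathcal{H}}=1$, I would exhibit an explicit vector in the kernel: a direct verification gives
\begin{equation*}
\mathfrak{F}_{k}(\mathfrak{F}^{-1}x_{k})=x_{k}-\langle \mathfrak{F}^{-1}x_{k},x_{k}\rangle_{\mathcal{H}}x_{k}=0_{\mathcal{H}},
\end{equation*}
so $\mathfrak{F}^{-1}x_{k}\in \ker(T_{k}T_{k}^{*})=\ker(T_{k}^{*})$. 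Since $\ker(T_{k}^{*})$ is precisely the orthogonal complement of the closed span of $(x_{m})_{m\neq k}^{\infty}$, and $\mathfrak{F}^{-1}x_{k}\neq 0$ whenever $x_{k}\neq 0$ (using invertibility of $\mathfrak{F}^{-1}$), this nonzero vector witnesses that $(x_{m})_{m\neq k}^{\infty}$ is incomplete, so $x_{k}$ is essential.

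The main obstacle is the passage from ``$T_{k}$ surjective'' to a clean computable criterion; the key trick is recognising $T_{k}T_{k}^{*}$ as a rank one perturbation of $\mathfrak{F}$ and symmetrising via $\mathfrak{F}^{1/2}$, because then the invertibility question becomes a trivial one about $1-u\otimes u$. The rest is bookkeeping. No appeal to the Wold decomposition, Blaschke products, or models is needed; this result is of a purely abstract frame theoretic nature and is the tool that will later be combined with Theorem \ref{normofblaschkevectors} to establish Theorem \ref{redundancycor}.
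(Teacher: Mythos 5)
Your argument is correct, but note that the paper itself offers no proof of this statement: it is quoted verbatim as \cite[Theorem 5.4.7]{Christensen} and used as a black box, so there is nothing internal to compare against. Your rank-one-perturbation proof is a legitimate, self-contained alternative to Christensen's textbook argument (which proceeds via the canonical dual frame coefficients $\langle x_{k},\mathfrak{F}^{-1}x_{m}\rangle_{\mathcal{H}}$ and the $\ell^{2}$-minimality of frame coefficients). All the individual steps check out: $(x_{m})_{m\neq k}$ is automatically Bessel as a subsequence of a Bessel sequence, so $T_{k}$ is well defined and bounded and Theorem \ref{existencepreframe} reduces the frame property to surjectivity of $T_{k}$, equivalently invertibility of the positive operator $T_{k}T_{k}^{*}$; the identity $T_{k}T_{k}^{*}=\mathfrak{F}-x_{k}\otimes x_{k}$ is right; the symmetrisation by $\mathfrak{F}^{1/2}$ correctly converts the question into invertibility of $1_{\mathcal{H}}-u_{k}\otimes u_{k}$, which fails precisely when $\|u_{k}\|_{\mathcal{H}}^{2}=\langle x_{k},\mathfrak{F}^{-1}x_{k}\rangle_{\mathcal{H}}=1$; and in case (2) the vector $\mathfrak{F}^{-1}x_{k}$ is nonzero (since $\langle x_{k},\mathfrak{F}^{-1}x_{k}\rangle_{\mathcal{H}}=1$ forces $x_{k}\neq 0$) and lies in $\ker(T_{k}^{*})=\bigl(\overline{\mathrm{Span}}(x_{m})_{m\neq k}\bigr)^{\perp}$, which is exactly incompleteness. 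What your route buys is that the dichotomy (frame versus incomplete, with no intermediate possibility) falls out of the spectral picture of $1_{\mathcal{H}}-u_{k}\otimes u_{k}$ in one stroke, whereas the classical proof has to treat the two cases by separate coefficient manipulations; the cost is that you invoke the square root $\mathfrak{F}^{1/2}$ and the surjectivity criterion $AA^{*}$ invertible, both standard but not used elsewhere in this paper.
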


By Theorem \ref{normofblaschkevectors}, the norm of each $b_{m}=B(V^{*})e_{m}$ in a Blaschke frame for $m=1,2,\hdots$ is a consequence of $e_{m}=S^{s-1}e_{\Lambda(n)}$ and $e_{\Lambda(n)}\in\mathrm{Ker}(V^{*})$ for some positive integer $s$ (where $S=V|_{\mathcal{H}_{2}}$ is a unilateral shift by the Wold decomposition and $\Lambda:\mathbb{N}\rightarrow\mathbb{N}$ is an index function). We completely solve the the problem of redundancy for a Blaschke frame with an isometry in the following result.

\begin{theorem}\label{redundancycor}
Let $\mathcal{H}$ be a Hilbert space and let $(E_{j})_{j=1}^{d}$ be the Malmquist-Takenaka functions corresponding to a finite Blaschke product $B$. For every vector $b_{k}$ in a Blaschke frame for $B$ with isometry $V$ on $\mathcal{H}$ for $k=1,2,\hdots$, there exists a positive integer $s$ such that:

\begin{enumerate}
     \item  If $\dfrac{d^{s-1}E_{j}(0)}{dz^{s-1}} = 0$ for $j=1,\hdots,d$, then $b_{k}$ is essential;
     \vspace{0.2cm}
         \item  If $\dfrac{d^{s-1}E_{j}(0)}{dz^{s-1}} \neq 0$ for at least one $j=1,\hdots,d$, then $b_{k}$ is redundant.
\end{enumerate}
\end{theorem}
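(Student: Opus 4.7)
The plan is to combine three already-established ingredients: Christensen's redundancy criterion (Theorem \ref{redundancyresult}), the Parseval property of a Blaschke frame (Theorem \ref{blaschkeframetheorem}), and the explicit norm formula for Blaschke frame vectors (Theorem \ref{normofblaschkevectors}). The real content has been developed in the preceding sections, so what remains is a short chain of substitutions.

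First, I would apply Theorem \ref{blaschkeframetheorem} to observe that the frame operator of a Blaschke frame is $\mathfrak{F}=1_{\mathcal{H}}$, and therefore $\mathfrak{F}^{-1}=1_{\mathcal{H}}$ as well. This trivialises Christensen's criterion: for each $k=1,2,\hdots$,
$$
\big\langle b_{k},\mathfrak{F}^{-1}b_{k}\big\rangle_{\mathcal{H}} \;=\; \big\langle b_{k},b_{k}\big\rangle_{\mathcal{H}} \;=\; \|b_{k}\|_{\mathcal{H}}^{2},
$$
so by Theorem \ref{redundancyresult}, $b_{k}$ is essential if and only if $\|b_{k}\|_{\mathcal{H}}^{2}=1$, and redundant otherwise.

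Next, I would substitute the norm formula from Theorem \ref{normofblaschkevectors}. For each $k$ there exists a positive integer $s$ (arising from the Wold decomposition, namely the unique $s$ for which $e_{k}=S^{s-1}e_{\Lambda(n)}$ with $e_{\Lambda(n)}\in\mathrm{Ker}(V^{*})$) such that
$$
\|b_{k}\|_{\mathcal{H}}^{2} \;=\; 1 - \dfrac{1}{\big((s-1)!\big)^{2}}\sum_{j=1}^{d}\Bigg\lvert\dfrac{d^{s-1}E_{j}(0)}{dz^{s-1}}\Bigg\rvert^{2}.
$$
The subtracted quantity is a sum of non-negative real numbers, so it vanishes precisely when $\frac{d^{s-1}E_{j}(0)}{dz^{s-1}} = 0$ for every $j=1,\hdots,d$. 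In that case $\|b_{k}\|_{\mathcal{H}}^{2}=1$ and $b_{k}$ is essential, giving case (1). Otherwise the sum is strictly positive, so $\|b_{k}\|_{\mathcal{H}}^{2}<1\neq 1$, and Christensen's criterion declares $b_{k}$ redundant, giving case (2).

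There is no genuine obstacle at this stage; the main work was done in establishing the Parseval property via Lemma \ref{blaschkeoperatorcoisom} and in the model-theoretic calculation behind Theorem \ref{normofblaschkevectors}. The only subtlety worth flagging is that the co-isometry property of $B(V^{*})$ ensures $\|b_{k}\|_{\mathcal{H}}^{2}\leq 1$ a priori, so the dichotomy "$=1$ versus $\neq 1$" in Christensen's test coincides precisely with the dichotomy "all derivatives vanish versus at least one does not", producing the two clean cases stated in the theorem.
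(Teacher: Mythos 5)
Your proposal is correct and follows essentially the same route as the paper's own proof: both reduce Christensen's criterion to the condition $\|b_{k}\|_{\mathcal{H}}^{2}=1$ via the Parseval property $\mathfrak{F}=1_{\mathcal{H}}$ from Theorem \ref{blaschkeframetheorem}, then substitute the norm formula of Theorem \ref{normofblaschkevectors} and observe that the subtracted sum of non-negative terms vanishes precisely when every derivative $\frac{d^{s-1}E_{j}(0)}{dz^{s-1}}$ is zero. Your closing remark that the co-isometry property guarantees $\|b_{k}\|_{\mathcal{H}}^{2}\leq 1$ is a small but worthwhile addition that the paper leaves implicit.
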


\begin{proof}
To prove (1) and show incompleteness of the sequence $(b_{m})_{m\neq k}^{\infty}$, where $b_{m} = B(V^{*})e_{m}$, we need to find when

$$\langle b_{k} , \mathfrak{F}^{-1} b_{k} \rangle_{\mathcal{H}} =1 $$

\noindent where $\mathfrak{F}$ is the frame operator for a Blaschke frame, Theorem \ref{redundancyresult}. By Theorem \ref{blaschkeframetheorem}, $\mathfrak{F}=1_{\mathcal{H}}$ for a Blaschke frame for $B$ with isometry $V$. Therefore we need to calculate the norm of the vector $b_{k}$ to find the redundancy condition. By Theorem \ref{normofblaschkevectors}, for each $m=1,2,\hdots$, there exists a positive integer $s$ such that

$$\|b_{m}\|_{\mathcal{H}}^{2} = 1 - \dfrac{1}{\big((s-1)!\big)^{2}}\sum_{j=1}^{d}\Bigg\lvert\dfrac{d^{s-1}E_{j}(0)}{dz^{s-1}}\Bigg\rvert^{2}.$$

\noindent Therefore, if $\|b_{k}\|_{\mathcal{H}}=1$, the sequence $(b_{m})_{m\neq k}^{\infty}$ is incomplete in $\mathcal{H}$. Under this assumption,

\begin{align*}
    \|b_{k}\|^{2}_{\mathcal{H}}=1 &\iff \dfrac{1}{\big((s-1)!\big)^{2}}\sum_{j=1}^{d}\Bigg\lvert\dfrac{d^{s-1}E_{j}(0)}{dz^{s-1}}\Bigg\rvert^{2}=0\\
    &\iff \Bigg\lvert\dfrac{d^{s-1}E_{j}(0)}{dz^{s-1}}\Bigg\rvert^{2} = 0~~\text{for}~j=1,\hdots,d\\
    &\iff \dfrac{d^{s-1}E_{j}(0)}{dz^{s-1}}=0~~\text{for}~j=1,\hdots,d.
\end{align*}

To prove (2), if $\dfrac{d^{s-1}E_{j}(0)}{dz^{s-1}}\neq 0$ for at least one $j=1,\hdots,d$

$$\|b_{k}\|_{\mathcal{H}}^{2} = 1 - \dfrac{1}{\big((s-1)!\big)^{2}}\sum_{j=1}^{d}\Bigg\lvert\dfrac{d^{s-1}E_{j}(0)}{dz^{s-1}}\Bigg\rvert^{2}\neq 1.$$

\noindent By Theorem \ref{redundancyresult}, the vector $b_{k}$ is redundant and $(b_{m})_{m\neq k}$ is a frame for $\mathcal{H}$.\end{proof}

In Theorem \ref{normofblaschkevectors}, if the isometry $V$ is a unitary unitary operator $U$ on a Hilbert space $\mathcal{H}$, then $1_{\mathcal{H}}-UU^{*}=0$ and 

$$\|b_{m}\|_{\mathcal{H}}^{2}=\Big\langle  B^{*}(U^{*})B(U^{*})e_{m},e_{m}\Big\rangle_{\mathcal{H}}=\bigg\langle  \Big(1_{\mathcal{H}}-\sum_{j=1}^{d}E^{*}_{j}(U^{*})(1_{\mathcal{H}}-UU^{*})E_{j}(U^{*})\Big)e_{m},e_{m}\bigg\rangle_{\mathcal{H}}=1.$$ 

\noindent Therefore, by the redundancy condition from Christensen, Theorem \ref{redundancyresult}, a Blaschke frame for $B$ with a unitary operator $U$ is an exact frame, or equivalently, a Riesz basis for a Hilbert space $\mathcal{H}$. In this instance, we have $(b_{m})_{m=1}^{\infty}=(B(U^{*})e_{m})_{m=1}^{\infty}$, therefore $B(U^{*})$ must be invertible. Indeed, by Lemma \ref{blaschkeoperatorcoisom}, $B(V^{*})$ is a co-isometry for any isometry $V$ on $\mathcal{H}$. By the model formula for the finite Blaschke product, Lemma \ref{blasckhelemma},

$$1_{\mathcal{H}}-B^{*}(U^{*})B(U^{*})=\sum_{j=1}^{d}E^{*}_{j}(U^{*})(1-UU^{*})E_{j}(U^{*})=0.$$

\noindent Thus $B^{*}(U^{*})B(U^{*})=1_{\mathcal{H}}$ and $B(U^{*})$ is also an isometry. Hence $B(U^{*})$ is invertible on $\mathcal{H}$ and $B^{-1}(U^{*})=B^{*}(U^{*})$, that is, $B(U^{*})$ is a unitary operator on $\mathcal{H}$. We have therefore proved the following.

\begin{proposition}\label{propunitary}
    A Blaschke frame for $B$ with a unitary operator $U$ on a Hilbert space $\mathcal{H}$ is a Riesz basis for $\mathcal{H}$. 
\end{proposition}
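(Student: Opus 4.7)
The plan is to prove that $B(U^{*})$ is a unitary operator on $\mathcal{H}$; once this is established, $(b_{m})_{m=1}^{\infty}=(B(U^{*})e_{m})_{m=1}^{\infty}$ is the image of an orthonormal basis under a bounded invertible operator, which is by definition a Riesz basis for $\mathcal{H}$.

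The key ingredient is the operator-valued model identity that was derived in the proof of Theorem \ref{normofblaschkevectors} by applying the Riesz--Dunford functional calculus to both factors of equation (\ref{blaschkeproductequation}). Specialising it to $V=U$ gives
$$1_{\mathcal{H}} - B^{*}(U^{*})B(U^{*}) = \sum_{j=1}^{d} E_{j}^{*}(U^{*})\bigl(1_{\mathcal{H}} - UU^{*}\bigr)E_{j}(U^{*}).$$
Because $U$ is unitary, $UU^{*} = 1_{\mathcal{H}}$, so every summand on the right vanishes. Hence $B^{*}(U^{*})B(U^{*}) = 1_{\mathcal{H}}$, i.e.\ $B(U^{*})$ is an isometry. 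Combined with Lemma \ref{blaschkeoperatorcoisom}, which applies to any isometry and in particular to $U$ and gives $B(U^{*})B^{*}(U^{*}) = 1_{\mathcal{H}}$, I conclude that $B(U^{*})$ is unitary, and therefore bounded and invertible with $B(U^{*})^{-1} = B^{*}(U^{*})$.

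Applying this bounded invertible operator to the orthonormal basis $(e_{m})_{m=1}^{\infty}$ yields the Riesz basis claim directly from the definition recalled at the start of Section \ref{constructblaschke}. There is no serious obstacle: the only delicate point is the use of the operator-valued model identity, but that has already been justified inside the proof of Theorem \ref{normofblaschkevectors}, and the argument here simply observes that unitarity of $U$ kills the defect term that was previously responsible for $B(V^{*})$ being only a co-isometry. As a sanity check, the same conclusion is visible through the redundancy lens: Theorem \ref{normofblaschkevectors} together with $1_{\mathcal{H}} - UU^{*} = 0$ gives $\|b_{m}\|_{\mathcal{H}} = 1$ for every $m$, and hence by the redundancy criterion (Theorem \ref{redundancyresult}) every vector is essential, so the Parseval frame $(b_{m})_{m=1}^{\infty}$ is exact and therefore a Riesz basis.
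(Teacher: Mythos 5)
Your proof is correct and follows essentially the same route as the paper: both rest on the operator-valued model identity $1_{\mathcal{H}}-B^{*}(U^{*})B(U^{*})=\sum_{j}E_{j}^{*}(U^{*})(1_{\mathcal{H}}-UU^{*})E_{j}(U^{*})$ together with Lemma \ref{blaschkeoperatorcoisom} to conclude that $B(U^{*})$ is unitary, and both note the equivalent redundancy argument via $\|b_{m}\|_{\mathcal{H}}=1$ and Theorem \ref{redundancyresult}. The only difference is one of emphasis --- the paper leads with the exact-frame criterion and derives unitarity of $B(U^{*})$ afterwards, whereas you lead with unitarity and relegate the redundancy argument to a sanity check --- which is immaterial.
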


For a pure isometry $V$ on a Hilbert space $\mathcal{H}$, we consider the following simple example to illustrate the test for redundancy.

\begin{example}\label{simpleexample}
Let $b_{m}$ be a vector in a Blaschke frame for $z^{d}$ with the right shift operator $S$ on a Hilbert space $\mathcal{H}$. Then every vector $b_{m}$ for $m\leq d$ is a redundant vector, that is, the sequence $(b_{k})_{k\neq m}$ for $m\geq d+1$ is incomplete.
    
\end{example}

\begin{proof}
For the right shift operator $S$ on $\mathcal{H}$, we have 

$$(b_{m})_{m=1}^{\infty}=((S^{*})^{d}e_{m})_{m=1}^{\infty}=(0)_{m=1}^{d}\cup(e_{m})_{m=1}^{\infty}.$$

\noindent As $z^{d}$ has one zero, $z=0$, of multiplicity $d$, the Malmquist-Takenaka functions $E_{j}$ corresponding to $z^{d}$ are given by $E_{j}(z)=z^{j-1}$ for all $z\in{\overline{\mathbb{D}}}$ and $j=1,\hdots,d$. By Theorem \ref{redundancycor}, for every positive integer $s$ such that $s\geq d+1$, 

\begin{equation}\label{shiftexampleder}
\dfrac{d^{s-1}E_{j}(0)}{dz^{s-1}}=0
\end{equation}

\noindent for $j=1,\hdots,d$. Thus every vector $b_{m}=(S^{*})^{d}e_{m}$ such that $m\geq d+1$ is essential, that is, $(b_{k})_{k\neq m}$ is incomplete. Moreover, every vector $b_{m}$ such that $m\leq d$ is redundant. This makes sense, as the frame sequence $(b_{m})_{m=1}^{\infty}$ is simply the original orthonormal basis union with $d$ copies of the zero vector. Therefore we can delete each zero vector at no cost to the frame.\end{proof}

 Example \ref{simpleexample} extends beyond just the shift operator. Independent of the Hilbert space $\mathcal{H}$ and the isometry $V$ on $\mathcal{H}$, if $B(z)=z^{d}$ for all $z\in\overline{\mathbb{D}}$, the functions $E_{j}$ always satisfy equation (\ref{shiftexampleder}) for all positive integers $s\geq d+1$ and $j=1,\hdots,d$. Moreover, for all positive integers $s\leq d$,

 $$\dfrac{d^{s-1}E_{j}(0)}{dz^{s-1}}\neq 0$$

 \noindent for at least one $j=1,\hdots,d$, as
 
 $$\dfrac{d^{j-1}}{dz^{j-1}}E_{j}(0)=(j-1)!$$
 
\noindent for $j=1,\hdots,d$. Therefore we have proved the following result.

\begin{proposition}\label{redundantforzd}
Let $\mathcal{H}$ be a Hilbert space. The vector $b_{m}=(V^{*})^{d}e_{m}$ in a Blaschke frame for $z^{d}$ with isometry $V$ on $\mathcal{H}$ is redundant if and only if $e_{m}=S^{s-1}e_{\Lambda(n)}$ and $s\leq d$, where $S=V|_{\mathcal{H}_{2}}$ and $e_{\Lambda(n)}\in\mathrm{Ker}(V^{*})$.
\end{proposition}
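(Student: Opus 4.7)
The plan is to apply Theorem \ref{redundancycor} directly, after specialising the Malmquist--Takenaka functions to the finite Blaschke product $B(z)=z^d$. Since all the zeros are $\lambda_1=\cdots=\lambda_d=0$, the defining formula (\ref{TM}) collapses to $E_j(z)=z^{j-1}$ for $j=1,\ldots,d$. A direct calculation then gives
$$\dfrac{d^{s-1}E_j(0)}{dz^{s-1}}=(j-1)!\,\delta_{s,j},$$
so the $j$-th derivative vanishes at $0$ except in the single case $s=j$, in which it equals $(j-1)!\neq 0$.

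From this pointwise formula the redundancy dichotomy is immediate. First I would treat the case $s\leq d$: choosing $j=s$ in the range $1,\ldots,d$ yields $\dfrac{d^{s-1}E_s(0)}{dz^{s-1}}=(s-1)!\neq 0$, so Theorem \ref{redundancycor}(2) forces $b_m$ to be redundant. Next, for $s\geq d+1$, every $j\in\{1,\ldots,d\}$ satisfies $s-1\geq d>j-1$, hence all the derivatives $\dfrac{d^{s-1}E_j(0)}{dz^{s-1}}$ are zero, and Theorem \ref{redundancycor}(1) delivers essentiality of $b_m$. This gives both directions of the stated equivalence under the assumption that $e_m$ is of the form $S^{s-1}e_{\Lambda(n)}$.

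The remaining structural point is the link between the index $m$ and the integer $s$. This is furnished by the Wold decomposition together with Lemma \ref{propforonbshifts}: each basis vector $e_m$ lying in the shift part $\mathcal{H}_2$ admits a unique representation $e_m=S^{s-1}e_{\Lambda(n)}$ with $e_{\Lambda(n)}\in\mathrm{Ker}(V^*)$, where $s$ records its generation in the wandering-subspace decomposition. If instead $e_m$ lies in the unitary part $\mathcal{H}_1$, then the operator-valued model identity in the proof of Theorem \ref{normofblaschkevectors} gives $\|b_m\|_{\mathcal{H}}=1$, so $b_m$ would be essential by Theorem \ref{redundancyresult}; this case is automatically excluded by the hypothesis $e_m=S^{s-1}e_{\Lambda(n)}$ in the statement, preserving the \emph{if and only if}.

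I do not expect any genuine obstacle in this argument, since all the heavy machinery has already been assembled: the norm formula of Theorem \ref{normofblaschkevectors}, the redundancy test of Theorem \ref{redundancycor}, and the shift structure in Lemma \ref{propforonbshifts}. The only slightly delicate check is that the Malmquist--Takenaka functions specialise correctly when all $\lambda_k=0$ (so that the products in (\ref{TM}) reduce to $z^{j-1}$ and the normalisation constants $(1-|\lambda_j|^2)^{1/2}$ become $1$); once this verification is in place, the proposition follows by a one-line application of the derivative computation above.
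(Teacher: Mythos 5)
Your proposal is correct and follows essentially the same route as the paper: both specialise the Malmquist--Takenaka functions for $B(z)=z^{d}$ to $E_{j}(z)=z^{j-1}$, observe that $\frac{d^{s-1}E_{j}(0)}{dz^{s-1}}=(j-1)!\,\delta_{s,j}$, and read off the dichotomy from Theorem \ref{redundancycor} (nonzero derivative for $j=s$ when $s\leq d$, all derivatives vanishing when $s\geq d+1$). Your additional remark disposing of the unitary part $\mathcal{H}_{1}$ is a sensible clarification of the ``if and only if'' that the paper leaves implicit, but it does not change the substance of the argument.
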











Observe an implication of Theorem \ref{redundancycor}. By Proposition \ref{propunitary}, a Blaschke frame for $B$ with a unitary operator $U$ on $\mathcal{H}$ is a Riesz basis. Therefore every vector in a Blaschke frame is essential. Instead, suppose $V$ is a pure isometry. If $B=z^{d}$, then Proposition \ref{redundantforzd} tells us the redundancy condition for this Blaschke frame. The only other alternative is if $B$ a finite Blaschke product with at least one non-zero root. Our final result gives us conditions for when every vector in a Blaschke frame is redundant. This encourages our use of the following terminology. We say a frame $(x_{m})_{m=1}^{\infty}$ for a Hilbert space $\mathcal{H}$ is {\it{fully insured}} if any vector $x_{k}$ can be deleted from the frame such that $(x_{m})_{m\neq k}^{\infty}$ is a frame for $\mathcal{H}$, that is, if every vector $x_{m}$ is redundant for $m=1,2,\hdots$.


\begin{theorem}\label{everyvecisred}
    Let $\mathcal{H}$ be a Hilbert space and let $B$ be a finite Blaschke product with at least one non-zero root. Then a Blaschke frame for $B$ with a pure isometry $V$ on $\mathcal{H}$ is fully insured.
\end{theorem}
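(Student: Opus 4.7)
The plan is to combine the Wold decomposition with the explicit test in Theorem \ref{redundancycor}. Since $V$ is a pure isometry, the Wold decomposition (Theorem \ref{Wold}) forces $\mathcal{H}=\mathcal{H}_{2}$, so $V$ itself is a unilateral shift with wandering subspace $\mathcal{L}=\mathrm{Ker}(V^{*})$. Fix any $k$; Lemma \ref{propforonbshifts} then yields a positive integer $s$ and an index $\Lambda(n)$ with $e_{k}=V^{s-1}e_{\Lambda(n)}$ and $e_{\Lambda(n)}\in\mathcal{L}$. By Theorem \ref{redundancycor}, showing that $b_{k}$ is redundant reduces to showing that, for this $s$, at least one of the derivatives $\tfrac{d^{s-1}E_{j}(0)}{dz^{s-1}}$ for $j=1,\ldots,d$ is nonzero. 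Because $k$ (and hence $s$) is arbitrary, it suffices to verify this non-vanishing for \emph{every} positive integer $s$.

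This is where I would exploit the freedom to relabel the roots $\lambda_{1},\ldots,\lambda_{d}$ of $B$. The property of $b_{k}$ being redundant is intrinsic to the operator $B(V^{*})$ and does not depend on the order in which the roots of $B$ are listed, even though the specific Malmquist--Takenaka basis does. By hypothesis $B$ has at least one non-zero root, so I may choose an ordering with $\lambda_{1}\neq 0$. The empty product in (\ref{TM}) then collapses $E_{1}$ to
$$E_{1}(z)=\frac{(1-|\lambda_{1}|^{2})^{1/2}}{1+\overline{\lambda_{1}}z}=(1-|\lambda_{1}|^{2})^{1/2}\sum_{n=0}^{\infty}(-\overline{\lambda_{1}})^{n}z^{n},$$
so $\tfrac{d^{s-1}E_{1}(0)}{dz^{s-1}}=(1-|\lambda_{1}|^{2})^{1/2}(-\overline{\lambda_{1}})^{s-1}(s-1)!$, which is nonzero for every $s\geq 1$ because $0<|\lambda_{1}|<1$. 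Case (2) of Theorem \ref{redundancycor} then makes every $b_{k}$ redundant, which is precisely what it means for the frame to be fully insured.

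The main subtlety is justifying the use of a reordering, since Theorem \ref{redundancycor} as stated fixes a Malmquist--Takenaka basis. This is painless once one notes that the quantity $\sum_{j=1}^{d}\bigl|E_{j}^{(s-1)}(0)\bigr|^{2}$ equals $\bigl((s-1)!\bigr)^{2}\|P_{K_{B}}z^{s-1}\|_{H^{2}}^{2}$, where $K_{B}=H^{2}\ominus BH^{2}$ is independent of the ordering; so the vanishing condition in Theorem \ref{redundancycor} is basis-free. This also furnishes a basis-free alternative to the computation above: if the sum vanished for some $s$, then $z^{s-1}\in BH^{2}$, i.e.\ $z^{s-1}/B\in H^{2}$. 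But whenever some $\lambda_{j}\neq 0$, the function $z^{s-1}/B$ has a non-removable pole at $-\lambda_{j}\in\mathbb{D}$ (the factor $z^{s-1}$ does not vanish there), and $z^{s-1}/B\notin H^{2}$ for every $s\geq 1$. Either route delivers the conclusion.
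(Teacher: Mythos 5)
Your proof is correct and follows essentially the same route as the paper: both arguments reduce to case (2) of Theorem \ref{redundancycor} by exhibiting, for every positive integer $s$, a Malmquist--Takenaka function whose $(s-1)$-th derivative at $0$ is nonzero. The only difference is cosmetic --- the paper lists the zero roots first and works with $E_{k+1}$, whereas you list a nonzero root first so that $E_{1}(z)=(1-\lvert\lambda_{1}\rvert^{2})^{1/2}/(1+\overline{\lambda_{1}}z)$ alone covers all $s$; your observation that $\sum_{j=1}^{d}\lvert E_{j}^{(s-1)}(0)\rvert^{2}=\big((s-1)!\big)^{2}\,\lVert P_{H^{2}\ominus BH^{2}}\,z^{s-1}\rVert^{2}$ is independent of the ordering (equivalently, that the vanishing would force $z^{s-1}/B\in H^{2}$, impossible when $B$ has a nonzero root) tidies up a reordering step the paper leaves implicit.
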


\begin{proof}
If $B$ a finite Blaschke product of degree $d$ with at least one non-zero root, say $\lambda\in\mathbb{D}$, we can write

    $$B(z) = z^{k} \dfrac{z+\lambda}{1+\overline{\lambda}z}\widetilde{B}(z)$$
    
\noindent where $\widetilde{B}$ is a finite Blaschke product of degree $d-k-1$ with no roots at zero. The first $k+1$ Malmquist-Takenaka functions corresponding to $B$ satisfy $E_{j}(z)=z^{j-1}$ for $j=1,\hdots,k$ and 

$$E_{k+1}(z)=\dfrac{z^{k}(1-\lvert \lambda\rvert^{2})^{1/2}}{1+\overline{\lambda}z}.$$

\noindent By Theorem \ref{redundancycor}, if there exists a positive integer $s$ such that

$$\dfrac{d^{s-1}E_{j}(0)}{dz^{s-1}} = 0$$

\noindent for $j=1,\hdots,d$, then the vector $b_{m}=B(V^{*})e_{m}$ such that $S^{s-1}e_{\Lambda(n)}=e_{m}$ and $e_{\Lambda(n)}\in\mathrm{Ker}(V^{*})$ is essential for $m=1,2,\hdots$. Note

$$\dfrac{d^{k}E_{j}(0)}{dz^{k}}=0$$ 

\noindent for $j=1,\hdots,k$ but, for the function $E_{k+1}$,

$$E_{k+1}(z)=(1-\lvert \lambda\rvert^{2})^{1/2}\big(z^{k}-\overline{\lambda}z^{k+1}+\overline{\lambda}^{2}z^{k+2}-\overline{\lambda}^{3}z^{k+3}+\hdots\big)$$

\noindent and

$$\dfrac{d^{s-1}E_{k+1}(0)}{dz^{s-1}}\neq 0$$

\noindent for $s\geq k+1$. Hence, there does not exist a positive integer $s$ such that all the Malmquist-Takenaka functions satisfy Theorem \ref{redundancycor}, (1). Instead, for every positive integer $s$, Theorem \ref{redundancycor}, (2) is satisfied. Thus, for a finite Blaschke product $B$ with at least one non-zero root, a Blaschke frame for $B$ with a pure isometry $V$ is fully insured. \end{proof}


    
    
    
    








{\bf Acknowledgements.} I would like to thank Lyonell Boulton, Zinaida Lykova and Nicholas Young, for their indispensable comments on earlier iterations of this work.

\bibliographystyle{amsplain}

\end{document}